\theoremstyle{plain}
\newtheorem{thm}{Theorem}[section]
\newtheorem*{thm*}{Theorem}
\newtheorem{lem}[thm]{Lemma}
\newtheorem{cor}[thm]{Corollary}
\newtheorem{prop}[thm]{Proposition}
\theoremstyle{remark}
\newtheorem{rem}[thm]{Remark}
\newtheorem*{rem*}{Remark}
\theoremstyle{definition}
\newtheorem{defin}[thm]{Definition}
\newcommand{\Ecal}{\mathcal{E}}
\newcommand{\Ccal}{\mathcal{C}}
\newcommand{\Fcal}{\mathcal{F}}
\newcommand{\Hcal}{\mathcal{H}}
\newcommand{\Jcal}{\mathcal{J}}
\newcommand{\Lcal}{\mathcal{L}}
\newcommand{\Mcal}{\mathcal{M}}
\newcommand{\Ncal}{\mathcal{N}}
\newcommand{\Ocal}{\mathcal{O}}
\newcommand{\Ucal}{\mathcal{U}}
\newcommand{\ZZ}{\mathbb{Z}}
\newcommand{\CC}{\mathbb{C}}
\newcommand{\HH}{\mathbb{H}}
\newcommand{\RR}{\mathbb{R}}
\newcommand{\con}[1]{\nabla_{#1}}
\newcommand{\Po}{\mathcal{P}} 
\newcommand{\Ed}{E^\vee} 
\newcommand{\id}{\mathrm{id}}
\newcommand{\righteq}{\stackrel{\sim}{\rightarrow}}
\newcommand{\pr}{\mathrm{pr}}
\newcommand{\dd}{\mathrm{d}}
\newcommand{\triv}{\mathrm{triv}}
\newcommand{\Cinfty}{\Ccal^\infty}
\newcommand{\EN}{E}
\newcommand{\Oan}{\mathcal{O}^{\text{hol}}}
\newcommand{\PoC}{\tilde{\mathcal{P}}}
\newcommand{\MN}{M}
\newcommand{\om}{\underline{\omega}}
\newcommand{\HdR}[2]{\underline{H}^{#1}_{\mathrm{dR}}\left( #2 \right)}
\newcommand{\HdRabs}[2]{H^{#1}_{\mathrm{dR}}\left( #2 \right)}
\newcommand{\Tate}{\mathrm{Tate}}
\newcommand{\scan}{s_{\mathrm{can}}}
\newcommand{\scant}{\tilde{s}_{\mathrm{can}}}
\newcommand{\trv}{\tilde{\mathfrak{t}}}
\newcommand{\thetaD}{\prescript{}{D}\theta}
\newcommand{\EisD}{\prescript{}{D} E^{k,r+1}_{s}}
\newcommand{\Zp}{\mathbb{Z}_p}
\newcommand{\Qp}{\mathbb{Q}_p}
\newcommand{\Gmf}[1]{\widehat{\mathbb{G}}_{m,#1}}
\newcommand{\Gm}[1]{\mathbb{G}_{m,#1}}
\newcommand{\Ef}{\hat{E}}
\newcommand{\Etriv}{\mathrm{E}}
\newcommand{\Edtriv}{\mathrm{E}^\vee}
\newcommand{\Eftriv}{\hat{\mathrm{E}}}
\newcommand{\Edftriv}{\hat{\mathrm{E}}^\vee}
\newcommand{\Pof}{\widehat{\Po}}
\newcommand{\Lnf}{\widehat{\Lcal}_n}
\newcommand{\Mtriv}{\mathrm{M}}
\newcommand{\VpN}{V_p}
\newcommand{\Edf}{\hat{E}^\vee}
\newcommand{\EispD}{\prescript{}{D}{\mathcal{E}}^{k,r+1}_{(a,b)}}
\newcommand{\Frob}{\mathrm{Frob}}
\newcommand{\muEisD}{\mu^{\mathrm{Eis}}_{D,(a,b)}}
\newcommand{\muEisDp}{\mu^{\mathrm{Eis,(p)}}_{D,(a,b)}}
\newcommand{\pthetaD}{\prescript{}{D}\vartheta}
\newcommand{\UR}{\mathsf{U}}
\DeclareMathOperator{\Spec}{Spec}
\DeclareMathOperator{\Res}{Res}
\DeclareMathOperator{\Sym}{\underline{Sym}}
\DeclareMathOperator{\Inf}{Inf}
\DeclareMathOperator{\TSym}{\underline{TSym}}
\DeclareMathOperator{\Tr}{Tr}
\DeclareMathOperator{\im}{im}
\DeclareMathOperator{\dlog}{\mathrm{dlog}}
\DeclareMathOperator{\Pic}{\underline{\mathrm{Pic}}^0_{E/S}}
\DeclareMathOperator{\Meas}{\mathrm{Meas}}
\title[Eisenstein--Kronecker series via the Poincar\'e bundle]{Eisenstein--Kronecker series via the Poincar\'e bundle}
\author{Johannes Sprang}
\address{Fakult\"at f\"ur Mathematik Universit\"at Regensburg  \\ 93040 Regensburg }
\email{johannes.sprang@mathematik.uni-regensburg.de }
\thanks{The author is supported by the CRC 1085 \emph{Higher Invariants} (Universit\"at Regensburg) funded by the DFG}
\date{}
\begin{document}

\begin{abstract}
A classical construction of Katz gives a purely algebraic construction of Eisenstein--Kronecker series using the Gau\ss--Manin connection on the universal elliptic curve. This approach gives a systematic way to study algebraic and $p$-adic properties of real-analytic Eisenstein series. In the first part of this paper we provide an alternative algebraic construction of Eisenstein--Kronecker series via the Poincar\'e bundle. Building on this, we give in the second part a new conceptional construction of Katz' two-variable $p$-adic Eisenstein measure through $p$-adic theta functions of the Poincar\'e bundle.
\end{abstract}

\maketitle

\section[Introduction]{Introduction}
The classical \emph{Eisenstein--Kronecker} series are defined for a lattice $\Gamma=\omega_1\ZZ+\omega_2\ZZ\subseteq \CC$, $s,t\in\frac{1}{N}\Gamma$ and integers $r-2>k\geq 0$ by the absolutely convergent series
	\[
		e^*_{k,r}(s,t;\Gamma):=\sum_{\gamma\in\Gamma\setminus\{-s\}} \frac{(\bar{s}+\bar{\gamma})^k}{(s+\gamma)^r} \langle \gamma,t  \rangle
	\]
with $\langle z,w  \rangle:=\exp\left( \frac{z\bar{w}-w\bar{z}}{A(\Gamma)} \right)$ and $A(\Gamma)=\frac{\im(\omega_1\bar{\omega}_2)}{\pi}$.  For arbitrary integers $k,r$ they can be defined by analytic continuation,  cf.~\cite[\S 1.1]{bannai_kobayashi}.  For our purposes, it is more convenient to normalize the Eisenstein--Kronecker series for integers $k,r\geq 0$, as follows:
\[
	\tilde{e}_{k,r+1}(s,t;\Gamma):=\frac{(-1)^{k+r}r!}{A(\Gamma)^k}e^*_{k,r+1}(s,t;\Gamma)
\]
 By varying the lattice the Eisenstein-Kronecker series define $\Ccal^\infty$-modular forms $\tilde{e}_{k,r+1}(s,t)$ of level $N$ and weight $k+r+1$. Although Eisenstein--Kronecker series are in general non-holomorphic, it turns out that they belong to the well behaved class of \emph{nearly holomorphic modular forms}.\par 
 A classical result of Katz gives a purely algebraic approach towards Eisenstein--Kronecker series by giving an algebraic interpretation of the Maa\ss--Shimura operator on the modular curve in terms of the Gau\ss--Manin connection. His construction has been one of the main sources for studying systematically the algebraic and $p$-adic properties of real-analytic Eisenstein series. Even today it has still influence, e.g.~it plays a key role in the proof of one of the most general known explicit reciprocity laws for Tate modules of formal $p$-divisible groups by Tsuji \cite{tsuji}. Unfortunately, the construction of the Maa\ss-Shimura operator only works in the universal situation and doesn't have good functoriality properties.\par 

The study of Eisenstein--Kronecker series through the Poincar\'e bundle has been initiated by Bannai and Kobayashi \cite{bannai_kobayashi}. Bannai and Kobayashi have proven that Eisenstein--Kronecker series appear as expansion coefficients of certain translates of the \emph{Kronecker theta function}:
	\begin{equation}\label{eq_KroneckerTheta}
		\Theta_{s,t}(z,w)=\sum_{a,b\geq 0}\frac{\tilde{e}_{a,b+1}(s,t)}{a!b!}z^bw^a,\quad s,t\notin\Gamma
	\end{equation}
The Kronecker theta function is a reduced theta function associated to the Poincar\'e bundle of an elliptic curve. For elliptic curves with complex multiplication, Bannai and Kobayashi have used this observation to study many interesting $p$-adic and algebraic properties of Eisenstein--Kronecker series at CM points \cite{bannai_kobayashi}. These results have been fruitfully applied by Bannai, Kobayashi and Tsuji for studying the algebraic de Rham and the syntomic realization of the elliptic polylogarithm for elliptic curves with complex multiplication \cite{BKT}. This approach does not immediately generalize to more general elliptic curves over arbitrary base schemes. The assumption of complex multiplication is essential to deduce the algebraicity of the involved theta function. \par 
Building on the work of Bannai--Kobayashi, we provide in the first part of the paper a purely algebraic construction of Eisenstein--Kronecker series via the Poincar\'e bundle on the universal vectorial bi-extension. This construction works over arbitrary base schemes and has good functoriality properties.
In the second part of the paper, we provide a new approach to the $p$-adic interpolation of Eisenstein--Kronecker series. We will show that Katz' two-variable $p$-adic Eisenstein measure is the Amice transform of a certain $p$-adic theta function of the Poincar\'e bundle. This does not only give a conceptional approach towards the $p$-adic Eisenstein measure, it also provides a direct bridge between $p$-adic theta functions and $p$-adic modular forms.\par
Let us briefly outline the content of this paper in more detail: Theta functions provide a convenient way to describe sections of line bundles over complex abelian varieties analytically. In the first part of this paper, we will define a certain section of the Poincar\'e bundle, called \emph{the Kronecker section}. While the Kronecker section is defined for arbitrary families of elliptic curves, it can be expressed in terms of the Kronecker theta function for elliptic curves over $\CC$. The universal vectorial extension of the dual elliptic curve classifies line bundles of degree zero with an integrable connection. This gives us two integrable connections $\con{\sharp}$ and $\con{\dagger}$ on the Poincar\'e bundle over the universal vectorial bi-extension. By applying both connections iteratively to the Kronecker section and evaluating at torsion sections $s$ and $t$, we obtain a new algebraic construction of Eisenstein--Kronecker series:
\begin{thm*}
Let $\Hcal^{1}_{\mathrm{dR}}$ denote the first relative de Rham cohomology of the universal elliptic curve $\Ecal\rightarrow\Mcal$. The image of the sections
\[
	(s\times t)^{*} \left( \con{\sharp}^{\circ k}\circ\con{\sharp}^{\circ r}(\scan)\right)\in\Gamma(\Mcal,\Sym^{k+r+1}\Hcal^{1}_{\mathrm{dR}}) 
\]
under the Hodge decomposition  gives the classical Eisenstein--Kronecker series $\tilde{e}_{k,r+1}(Ds,Nt)$.
\end{thm*}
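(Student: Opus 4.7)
The plan is to reduce the statement to the complex-analytic situation, where the Kronecker section becomes a classical theta function and the two algebraic connections become (up to explicit correction terms) ordinary partial derivatives. Once we are in that setting, the Taylor expansion \eqref{eq_KroneckerTheta} of Bannai--Kobayashi identifies the iterated derivatives at torsion sections with the series $\tilde e_{k,r+1}$, and the $\Cinfty$-Hodge projection has the effect of reading off the appropriate coefficient of a polynomial in the Hodge basis of $\Sym^{k+r+1}\Hcal^{1}_{\mathrm{dR}}$.

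First, I would identify $\scan$ analytically with the Kronecker theta function. The universal vectorial bi-extension analytifies to an explicit family of products of complex tori, and sections of the analytified Poincar\'e bundle are described by reduced theta functions. The canonical normalization of $\scan$, namely its prescribed value along the zero section together with its quasi-periodicity, pins it down uniquely, and direct comparison with the defining properties of $\Theta_{s,t}$ in \cite{bannai_kobayashi} shows that the two agree up to the standard period factor $A(\Gamma)$.

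Next, I would describe $\con{\sharp}$ and $\con{\dagger}$ after this analytic trivialization. The two connections come from the two projections of the universal vectorial bi-extension onto the universal extensions of $\Ecal$ and $\Ecal^\vee$, and under the trivialization by the Kronecker theta function they become, modulo explicit correction terms arising from the quasi-periodic factor, partial differentiation in the two variables $z$ and $w$. Iterating them therefore yields $\partial_z^{\,r}\partial_w^{\,k}\Theta_{s,t}$ plus lower-order correction terms.

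Finally, I would evaluate along the torsion sections $s\times t$ and apply the Hodge decomposition. On the analytic side, the expansion \eqref{eq_KroneckerTheta} shows that the coefficient of $z^r w^k/(r!\,k!)$ in $\Theta_{s,t}(z,w)$ is exactly $\tilde e_{k,r+1}(s,t)$; the shifts by $D$ and $N$ arise because the torsion sections interact multiplicatively with the rigidification of the Poincar\'e bundle, converting the argument pair into $(Ds,Nt)$. On the algebraic side, after evaluation the section lives in $\Sym^{k+r+1}\Hcal^{1}_{\mathrm{dR}}$, and the $\Cinfty$-splitting of the Hodge filtration on the universal curve picks out, within this symmetric power, precisely the pure $\omega^{k+r+1}$-component, which by the above comparison equals $\tilde e_{k,r+1}(Ds,Nt)$. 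The main obstacle is the clean analytic identification of $\con{\sharp}$ and $\con{\dagger}$ once all canonical trivializations have been fixed: the universal vectorial bi-extension carries competing rigidifications coming from the Hodge filtration and from the Poincar\'e biextension structure, and correctly matching the resulting correction terms with the $A(\Gamma)$-normalization built into the definition of $\tilde e_{k,r+1}$ is where the bookkeeping is delicate. Once this identification is in hand, the theorem reduces to reading off coefficients in the Bannai--Kobayashi expansion.
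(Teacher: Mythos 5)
Your proposal follows essentially the same route as the paper: reduce by functoriality to a single complex elliptic curve, trivialize the analytified Poincar\'e bundle by the Kronecker theta function so that $\scan$ becomes $\Theta(z,w)\,\trv\otimes\dd z$, compute $\con{\sharp}$ and $\con{\dagger}$ on the universal covering as $\partial_w$, $\partial_z$ plus the explicit linear correction terms in the fiber coordinates of the vectorial extensions (the paper does this bookkeeping via Katz's formula for the universal connection), and then read off the Taylor coefficients from the Bannai--Kobayashi expansion, the Hodge decomposition killing exactly the non-pure components produced by the correction terms. This matches the paper's argument in Propositions~\ref{ch_EP_prop_Uscan_expl} and~\ref{EP_prop_Ekr} and the proof of \cref{EP_MainThm}.
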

Let us refer to \cref{EP_MainThm} for more details. Although the algebraic construction of Eisenstein--Kronecker series goes back to Katz, this new point of view has several advantages. Let us remark that the construction of the sections in $\Gamma(S,\Sym^{k+r+1}\Hcal^{1}_{\mathrm{dR}} )$ applies, contrary to Katz' approach, to elliptic curves $E\rightarrow S$ over arbitrary base schemes $S$ and has good functoriality properties. Furthermore, the geometric setup is much more symmetric than the construction involving the Gau\ss--Manin connection. Indeed, the functional equation of the Eisenstein--Kronecker series is reflected by the symmetry obtained by interchanging the role of the elliptic curve and its dual. Finally, let us note that the Kronecker section provides a new construction of the logarithmic derivatives of the Kato--Siegel functions associated to a positive integer $D$. It might be worth to mention that, contrary to the classical construction of the Kato--Siegel functions, our construction even works if $D$ is not co-prime to $6$.\par 
In the second part of the paper, we apply the methods of the first part to obtain a new approach to Katz' $p$-adic Eisenstein measure. In \cite{katz_padicinterpol}, Katz constructed a two-variable $p$-adic measure with values in the ring of generalized $p$-adic modular forms having certain real-analytic Eisenstein series as moments. Katz' $p$-adic Eisenstein measure is the key for studying $p$-adic congruences between real-analytic Eisenstein series. But the proof of existence of the $p$-adic Eisenstein measure uses all the predicted congruences between Eisenstein series. These congruences in turn have to be checked by hand on the $q$-expansion. We will give a more intrinsic construction of the $p$-adic Eisenstein measure via the Poincar\'e bundle. Norman's theory of $p$-adic theta functions allows us to associate a $p$-adic theta function $\pthetaD_{(a,b)}(S,T)$ to the Kronecker section for any elliptic curve with ordinary reduction over a $p$-adic base. Applying this to the universal trivialized elliptic curve gives us a two-variable power series over the ring of generalized $p$-adic modular forms. A classical result of Amice allows us to associate a two-variable $p$-adic measure $\muEisD$ to this $p$-adic theta function. It turns out that the resulting measure interpolates the  Eisenstein--Kronecker series $p$-adically:

\begin{thm*}
The $p$-adic Eisenstein--Kronecker series $\EispD$ appear as moments
	\[
		\EispD=\int_{\Zp\times\Zp} x^k y^r \dd \muEisD(x,y)
	\]
	of the measure $\muEisD$ associated to the $p$-adic theta function $\pthetaD_{(a,b)}(S,T)$.
\end{thm*}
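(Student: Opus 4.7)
The plan is to reduce the moment formula to an iterated application of the connections from Part 1 via the Amice transform. By construction $\muEisD$ is the measure on $\Zp\times\Zp$ whose two-variable Amice transform is $\pthetaD_{(a,b)}(S,T)$, i.e.\ $\int(1+S)^x(1+T)^y\,\dd\muEisD(x,y) = \pthetaD_{(a,b)}(S,T)$. The usual Amice formalism then identifies power moments with iterated $\Gmfabs$-invariant derivations evaluated at the origin:
\[
	\int_{\Zp\times\Zp} x^k y^r\,\dd\muEisD(x,y) = \bigl(D_S^k D_T^r\,\pthetaD_{(a,b)}(S,T)\bigr)\big|_{S=T=0},
\]
with $D_S = (1+S)\partial_S$ and $D_T = (1+T)\partial_T$. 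It therefore suffices to compute these Taylor-type coefficients geometrically.

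Next, I interpret $D_S, D_T$ in the algebraic language of Part 1. On the universal trivialized elliptic curve, the chosen trivialization identifies $\Ef\cong\Gmfabs$ and $\Edf\cong\Gmfabs$, under which $D_S$ (resp.\ $D_T$) is dual to the canonical invariant differential $\om$ on $\Ef$ (resp.\ $\om^\vee$ on $\Edf$). By Norman's theory of $p$-adic theta functions, $\pthetaD_{(a,b)}(S,T)$ is precisely the power series expansion, in these formal coordinates, of the Kronecker section $\scan$ restricted to the formal completion $\Ef\times\Edf$ of the Poincar\'e bundle (suitably rigidified at the origin). Differentiating this expansion with respect to $S$ (resp.\ $T$) therefore computes the pullback to $\Ef$ (resp.\ $\Edf$) of $\con{\sharp}$ (resp.\ $\con{\dagger}$) along $\partial_\om$ (resp.\ $\partial_{\om^\vee}$), using that in the ordinary case the tangent space of the formal group realises the unit-root subspace of $\Hcal^1_{\mathrm{dR}}$.

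Combining the two steps, the moment $\int x^k y^r\,\dd\muEisD$ equals the unit-root projection of the section
\[
	(s\times t)^*\bigl(\con{\sharp}^{\circ k}\circ\con{\dagger}^{\circ r}\scan\bigr) \in \Gamma(\Mcal,\Sym^{k+r+1}\Hcal^1_{\mathrm{dR}}).
\]
By the first theorem stated above, the Hodge projection of this section is the classical Eisenstein--Kronecker series $\tilde{e}_{k,r+1}(Ds,Nt)$. In the ordinary $p$-adic setting, the Hodge filtration admits a canonical Frobenius-stable splitting by the unit-root subspace, and $\EispD$ is by definition the image of the very same algebraic section under this $p$-adic splitting. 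Identifying these two projections on the iterated image of $\scan$ yields the claim.

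The hardest step is the compatibility asserted in the second paragraph: matching Norman's formal construction of $\pthetaD_{(a,b)}(S,T)$ with the Taylor expansion of $\scan$ along the unit-root subspace, and hence its formal-variable derivatives with iterated applications of $\con{\sharp}, \con{\dagger}$. This amounts to comparing the rigidification at the origin underlying Norman's theta functions with the splitting of the universal vectorial bi-extension provided by the canonical lift of Frobenius, a Dwork-style compatibility adapted to the bi-extension setting. Once this compatibility is in place, the theorem follows directly from the main theorem of Part 1 combined with the Amice formalism.
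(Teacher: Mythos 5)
Your overall architecture coincides with the paper's: the Amice formalism reduces the moment formula to the identity
\[
\EispD=(e\times e)^*\bigl(\partial_{\Edftriv}^{\circ k}\,\partial_{\Eftriv}^{\circ r}\,\pthetaD_{(a,b)}\bigr),
\]
which is exactly what the paper isolates as \cref{thm_padictheta}. Two smaller inaccuracies: the section whose theta function is being differentiated is not $\scan$ itself but the sum of translates $\sum_{e\neq t}U^{[N],[D]}_{s,t}(\scan)$ entering the definition of $\EisD$; and your closing move of ``identifying these two projections'' (Hodge versus unit-root) is neither needed nor true --- they are different splittings of the Hodge filtration, and no comparison between them enters here, because $\EispD$ is \emph{defined} as the unit-root projection of the algebraic class. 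The Hodge decomposition is only used in \cref{PI_propKatz} to compare with Katz's forms, which is not part of this statement.

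The genuine gap is your second paragraph. The assertion that differentiating the formal expansion in $S$ (resp.\ $T$) computes the pullback of $\con{\dagger}$ (resp.\ $\con{\sharp}$) is false as stated: Norman's trivialization $\triv\colon\Pof^\dagger\righteq\Ocal_{\Eftriv}\hat{\otimes}\Ocal_{\Eftriv^\dagger}$ is \emph{not} horizontal (the paper notes this explicitly for $\triv^{(1)}$), so the trivialized connection equals the invariant derivation only \emph{after} composing with the unit-root projection --- and proving that is the entire content of the theorem. The paper's argument runs through the geometric logarithm sheaves $\Lnf^\dagger=(\pr_{\Eftriv})_*\bigl(\Po^\dagger|_{\Eftriv\times\Inf^n_e E^\dagger}\bigr)$, embedded by the comultiplication maps into $\TSym^n\widehat{\Lcal}_1^\dagger$ with basis $\hat{\omega}^{[k,l]}$. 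Horizontality of the extension \eqref{eq_ses1} gives $\nabla^{(1)}_\dagger(\hat{\omega}^{[1,0]})=\nabla^{(1)}_\dagger(\hat{\omega}^{[0,1]})=0$, and the decisive input is the Frobenius structure with eigenvalues $\Psi(\hat{\omega}^{[k,l]})=p^l\,\phi_{\Eftriv}^*\hat{\omega}^{[k,l]}$: its horizontality forces the $\hat{\omega}^{[1,0]}$-coefficient of $\nabla^{(1)}_\dagger(\hat{\omega}^{[0,0]})$ to satisfy $f_{1,0}=p\cdot\phi_{\Eftriv}^*f_{1,0}$, hence to vanish, leaving $\con{(n)}(\hat{\omega}^{[k,l]})=c(l+1)\hat{\omega}^{[k,l+1]}\otimes\omega$. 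Only with this formula do the commutative diagrams in the proof of \cref{thm_padictheta} close up and identify the unit-root-projected connection with $\partial_{\Eftriv}\otimes\id$. Your sketch names this step as ``a Dwork-style compatibility'' but supplies no argument for it; as written, the proof is incomplete precisely at its load-bearing point.
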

Let us refer to \cref{thm_padictheta} and \cref{cor_Eis_moments} for details. This construction does not only give a more conceptional approach towards the $p$-adic Eisenstein measure, it also provides a direct bridge between $p$-adic theta functions and $p$-adic modular forms.\par

Last but not least, one of our main leading goals was to obtain a better understanding of polylogarithmic cohomology classes. The specialization of polylogarithmic cohomology classes along torsion sections gives Eisenstein classes which play a key role in proving particular cases of deep conjectures like the Tamagawa Number Conjecture (TNC) of Bloch and Kato and its $p$-adic analogue. In particular, the syntomic Eisenstein classes play an important role for proving particular cases of the $p$-adic Beilinson conjecture\cite{bannai_kings, bannai_kings2}.  We have already mentioned, that Bannai, Kobayashi and Tsuji have used the Kronecker theta function to give an explicit description of the algebraic de Rham realization and the syntomic realization of the elliptic polylogarithm for CM elliptic curves. Our aim is the generalization of these results to families of elliptic curves. In the case of the de Rham realization this goal has already been archived by Scheider in his PhD thesis by analytic methods \cite{rene}. Scheider's PhD thesis has been a great source of inspiration for us. He gave an \emph{analytic} description of the de Rham realization for the universal elliptic curve in terms of certain theta functions. But for all arithmetic applications it is indispensable to also have an explicit \emph{algebraic} description of the de Rham polylogarithm. The techniques developed in this paper allow to get rid of the analytically defined theta functions, to give a purely algebraic reinterpretation of the results of Scheider and to streamline some of his proofs \cite{deRham}.  An explicit algebraic description of the de Rham realization of the elliptic polylogarithm is the cornerstone towards an explicit description of the syntomic realization which is treated in \cite{Syntomic}.\par 
A naturally arising question is the question about higher dimensional generalizations. This question will be the content of future investigations.

\section*{Acknowledgement}
The results presented in this paper are part of my Ph.D. thesis at the Universit\"at Regensburg \cite{PhD}. It is a pleasure to thank my advisor Guido Kings for his guidance during the last years. Further, I would like to thank Shinichi Kobayashi for all the valuable suggestions, remarks and comments on my PhD thesis. The author would also like to thank the collaborative research centre SFB 1085 ``Higher Invariants'' by the Deutsche Forschungsgemeinschaft for its support.

\part{Real-analytic Eisenstein series via the Poincar\'e bundle}
During this work $E$ usually denotes an elliptic curve over an arbitrary base scheme $S$. The structure morphism $G\rightarrow S$ of an $S$-group scheme will be denoted by $\pi=\pi_G$ while its identity morphism will be denoted by $e=e_G$. For a commutative $S$-group scheme $G$ and some section $t\in G(S)$ let us write $T_t\colon G\rightarrow G$ for the translation morphism. Let us write $\om_{G/S}:=e^*\Omega^1_{G/S}$ for the relative co-Lie algebra of a commutative group scheme over $S$.

\section{Nearly holomorphic modular forms}
In this section we will briefly recall the theory of nearly holomorphic modular forms with an emphasize on their geometric interpretation following Urban \cite{urban}. The group of $2\times 2$ matrices with positive determinant $GL_2(\RR)^+$ acts on the upper half plane
\[
	\HH:=\{\tau=x+iy\in\CC: y>0 \}
\] 
by fractional linear transformations
\[
	\gamma.\tau:=\frac{a\tau+b}{c\tau+d} \text{ for } \gamma=\left(\begin{matrix} a & b\\c & d \end{matrix}\right)\text{ and }\tau\in\HH.
\]
For a complex valued function $f$ on $\HH$, a non-negative integer $k$ and $\gamma\in GL_2(\RR)^+$ let us write
\[
	(f|_k\gamma)(\tau):=\frac{\det(\gamma)^{k/2}}{(c\tau+d)^k}f(\gamma.\tau).
\]
Recall that holomorphic modular forms of weight $k$ and level $\Gamma\subseteq SL_2(\ZZ)$ are defined as functions on $\HH$ satisfying the following conditions:
\begin{enumerate}
\item $f$ is a holomorphic on $\HH$,
\item $f|_k\gamma=f$ for all $\gamma\in\Gamma$,
\item $f$ has a finite limit at the cusps.
\end{enumerate}
Weakening the condition $(a)$ to $f \in\Ccal^\infty(\HH)$ leads to the definition of $\Ccal^\infty$ modular forms. We have already seen in the introduction a class of $\Ccal^\infty$ modular forms of great number theoretic interest, namely the Eisenstein--Kronecker series $\tilde{e}_{a,b}(s,t;\ZZ+\tau \ZZ)$. Although the Eisenstein--Kronecker series are not holomorphic in general, they are not so far from being holomorphic. They are \emph{nearly holomorphic} in the following precise sense:
\begin{defin}\label{def_nearlyhol}
	A \emph{nearly holomorphic} modular form of weight $k$ and order $\leq r$ for $\Gamma\subseteq SL_2(\RR)$ is a function on $\HH$ satisfying:
	\begin{enumerate}
		\item $f\in\Ccal^\infty(\HH)$,
		\item $f|_k\gamma=f$ for all $\gamma\in\Gamma$,
		\item There are holomorphic functions $f_0,f_1,...,f_r$ on $\HH$ such that
		\[
			f(\tau)=f_0(\tau)+\frac{f_1(\tau)}{y}+...+\frac{f_r(\tau)}{y^r},
		\]
		\item $f$ has a finite limit at the cusps.
	\end{enumerate}
	Let us write $\Ncal_{k}^r(\Gamma,\CC)$ for the space of nearly holomorphic modular forms of weight $k$, order $r$ and level $\Gamma$.
\end{defin}

Holomorphic modular forms of weight $k$ and level $\Gamma$ can be seen as sections of the $k$-th tensor power of the cotangent-sheaf $\om^{\otimes k}$ of the generalized universal elliptic curve $\bar{\Ecal}$ of level $\Gamma$ over the compactification $\bar{\Mcal}$ of the modular curve of level $\Gamma$. This leads in a natural way to the definition of geometric modular forms and allows to study modular forms from an algebraic perspective. For sure, $\Ccal^\infty$ modular forms allow a similar geometric description as $\Ccal^\infty$ sections of $\om^{\otimes k}$, but by passing to $\Ccal^\infty$ sections, we loose all algebraic information. For nearly holomorphic modular forms, things become much better. They allow an algebraic interpretation which we will recall in the following: The algebraic de Rham cohomology $\Hcal^1_{dR}:=R^1\bar{\pi}_*\Omega^1_{\bar{\Ecal}/\bar{\Mcal}}(\log(\bar{\Ecal}\setminus \Ecal))$ sits in a short exact sequence
\[
	\begin{tikzcd}
		0\ar[r] & \om\ar[r] & \Hcal^1_{dR}\ar[r] & \om^\vee\ar[r] & 0
	\end{tikzcd}
\]
induced by the Hodge filtration
\[
	F^0\Hcal^1_{dR}=\Hcal^1_{dR}\supseteq F^1\Hcal^1_{dR}=\om\supseteq F^2\Hcal^1_{dR}=0.
\]
This filtration does not split algebraically. But after passing to $\Ccal^\infty(\bar{\Mcal})$ sections there is a canonical splitting called the Hodge decomposition.
\[
	\Hcal^1_{dR}(\Ccal^\infty)\righteq \om(\Ccal^\infty)\oplus \om^\vee(\Ccal^\infty)
\]
The Hodge filtration induces a descending filtration on $\Sym^k_{\Ocal_\Mcal}\Hcal^1_{dR}$ and by the Hodge decomposition we obtain an epimorphism
\[
	\Sym^k_{\Ccal^{\infty}(\Mcal)}\Hcal^1_{dR}(\Ccal^\infty)\twoheadrightarrow \om^{\otimes k}(\Ccal^\infty).
\]
It is the Hodge decomposition which allows to pass from algebraic sections to $\Ccal^\infty$-modular forms and thereby gives us the following algebraic interpretation of nearly holomorphic modular forms:
\begin{prop}[{\cite[Prop. 2.2.3.]{urban}}]
The Hodge decomposition induces an isomorphism
\[
	H^0\left(\bar{\Mcal}_{\CC},F^{k-r}\Sym^k \Hcal^1_{dR}\right)\righteq \Ncal^r_k(\Gamma,\CC).
\]
\end{prop}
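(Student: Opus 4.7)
The plan is to make both sides explicit on the universal cover $\HH\to\bar{\Mcal}_{\CC}$ and then verify that the resulting comparison glues globally. First I would pull the universal elliptic curve back to $\HH$ as $\CC/(\ZZ\tau+\ZZ)$ and fix an algebraic frame $(\omega,\eta)$ of $\Hcal^1_{dR}$ with $\omega=dz$ spanning $F^1=\om$ and $\eta$ any algebraic lift normalized so that under the Poincar\'e pairing $\langle\omega,\eta\rangle=1$. With respect to this frame, the subsheaf $F^{k-r}\Sym^k\Hcal^1_{dR}$ is generated by the monomials $\omega^{k-j}\eta^j$ for $0\le j\le r$, so every global section over $\HH$ has the unique expression $s=\sum_{j=0}^{r} a_j(\tau)\,\omega^{k-j}\eta^j$ with holomorphic $a_j$.

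Next I would compute the Hodge decomposition in this frame. Since the $\Ccal^\infty$-complement of $\om$ in $\Hcal^1_{dR}$ is spanned by $\bar\omega=d\bar z$, and since $\langle\omega,\bar\omega\rangle=-2i\,\operatorname{Im}\tau$, the pairing normalization forces
\[
	\eta=\lambda(\tau)\,\omega+\frac{i}{2y}\,\bar\omega,\qquad y=\operatorname{Im}\tau,
\]
for a holomorphic function $\lambda(\tau)$ depending on the choice of $\eta$. Therefore the projection of $\eta^j$ onto the $\om^{\otimes j}$-summand is a polynomial in $1/y$ of degree exactly $j$ with holomorphic coefficients. Assembling all contributions, the image of $s$ under the Hodge decomposition in $\om^{\otimes k}(\Ccal^\infty)$ equals
\[
	\Big(f_0(\tau)+\frac{f_1(\tau)}{y}+\cdots+\frac{f_r(\tau)}{y^r}\Big)\,\omega^{\otimes k},
\]
where each $f_m$ is a prescribed polynomial in $\lambda$ and the $a_j$. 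Identifying $\omega^{\otimes k}$ with the weight-$k$ automorphic factor recovers precisely the local form demanded by \Cref{def_nearlyhol}(c).

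Then I would check the remaining axioms of \Cref{def_nearlyhol}. Weight-$k$ invariance under $\Gamma$ is automatic from the fact that $s$ descends to a global section on $\bar{\Mcal}_{\CC}$ and that the Hodge decomposition is $SL_2$-equivariant. Finiteness at the cusps is built into the algebraic de Rham sheaf, whose extension across $\bar{\Ecal}\setminus\Ecal$ uses differentials with logarithmic poles: working locally on the Tate curve, holomorphic extension of the $a_j$ to $\bar{\Mcal}$ is equivalent to boundedness of the $f_m$ at each cusp. For the inverse assignment, given a nearly holomorphic form $f=\sum_{m=0}^r f_m/y^m$, one recovers the $a_j$ uniquely by solving the strictly triangular linear system produced by the expansion above, and holomorphy and cuspidality transfer from the $f_m$ to the $a_j$.

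The main obstacle is the coefficient computation in the second step: showing that the $\om$-component of $\eta$ really has the shape $\lambda(\tau)\,\omega+(i/2y)\,\bar\omega$ and that the induced change of basis between $\{\omega^{k-j}\eta^j\}_{0\le j\le r}$ and $\{y^{-m}\omega^{\otimes k}\}_{0\le m\le r}$ is strictly triangular and hence invertible. Once this is secured, independence of the choice of $\eta$ is absorbed into a reparametrization of the $a_j$, and the cuspidal verification via the Tate curve is then routine.
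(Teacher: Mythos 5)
The paper offers no proof of this statement---it is quoted directly from Urban---so your proposal can only be measured against the standard argument, whose architecture you have correctly identified: trivialize $\Sym^k\Hcal^1_{dR}$ over $\HH$ in a frame $(\omega,\eta)$ adapted to the Hodge filtration, compute the Hodge projection explicitly, and observe that the resulting change of basis to $\{y^{-m}\omega^{\otimes k}\}_{0\le m\le r}$ is triangular with invertible diagonal.

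There is, however, a genuine error at the crux of the computation. The normalization $\langle\omega,\eta\rangle=1$ pins down only the $\bar\omega$-coefficient of $\eta$; it does \emph{not} force the $\omega$-coefficient to be holomorphic. The holomorphic structure on $\Hcal^1_{dR}$ is the one for which the flat (Gau\ss--Manin) frame dual to the homology basis is holomorphic, not the one for which $dz$ and $d\bar z$ are; expressing a holomorphic $\eta$ with $\langle\omega,\eta\rangle=1$ in that flat frame and using the Legendre relation for the quasi-periods gives
\[
	\eta=\Bigl(\lambda(\tau)-\frac{i}{2y}\Bigr)\,\omega+\frac{i}{2y}\,\bar\omega
\]
with $\lambda$ holomorphic: the $\omega$-component itself carries a $1/y$ (this is exactly the $E_2^*$ phenomenon). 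The point is not cosmetic. With your formula $\eta=\lambda(\tau)\,\omega+\frac{i}{2y}\bar\omega$, the Hodge projection---which kills every $\bar\omega$ factor---sends $\omega^{k-j}\eta^{j}$ to $\lambda(\tau)^{j}\omega^{\otimes k}$, which is holomorphic; no powers of $1/y$ appear, your assertion that the projection of $\eta^j$ is ``a polynomial in $1/y$ of degree exactly $j$'' does not follow (it contradicts the formula), and the map would land in holomorphic forms only, so it could not surject onto $\Ncal^r_k(\Gamma,\CC)$. With the corrected formula the projection of $\omega^{k-j}\eta^j$ is $(\lambda-\tfrac{i}{2y})^{j}\omega^{\otimes k}$, whose expansion in $1/y$ has top coefficient $(-i/2)^j\neq 0$, so the change of basis is triangular and invertible over holomorphic functions, and the remainder of your argument (uniqueness of the $f_m$, descent, and the cusp condition checked against the Tate-curve frame $(\omega_{can},u_{can})$) goes through as you sketch it.
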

Instead of working with the compactification of the modular curve one can also work with the open modular curve with a finiteness condition at the cusps. For simplicity let us restrict to the case $\Gamma_1(N)$. The modular curve of level $\Gamma_1(N)$ is the universal elliptic curve with a fixed $N$-torsion section. The finiteness condition at the cusp can be stated in terms of the Tate curve as follows: Let $A$ be an $\ZZ[ \frac{1}{N}]$-algebra. Let $\Tate(q^N)$ be the Tate curve over $A((q))$ with its canonical invariant differential $\omega_{can}$ and its canonical $\Gamma_1(N)$ level structure given by the $N$-torsion section $q$. Applying the Gau\ss-Manin connection
\[
	\nabla\colon \HdR{1}{\Tate(q^N)/A((q))}\rightarrow \HdR{1}{\Tate(q^N)/A((q))}\otimes \Omega^1_{A((q))/A}
\]
to $\omega_{can}$ gives us a basis $(\omega_{can},u_{can})$ with
\[
	u_{can}:=\nabla(q\frac{d}{dq})(\omega_{can}).
\]
  This leads to the notion of geometric nearly holomorphic modular forms: 
\begin{defin}
Let $A$ be a $\ZZ[\frac{1}{N}]$-algebra. A \emph{geometric nearly holomorphic modular form} $F$ of level $\Gamma_1(N)$, weight $k$ and order $\leq r$ defined over $A$ is a functorial assignment
\[
	(E/S,t)\mapsto F_{E,t}\in\Gamma(S, F^{k-r}\Sym^k\HdR{1}{E/S} ),
\]
defined for all test objects $(E/S,t)$ consisting of an elliptic curve $E/S$ with $S$ an $\Spec{A}$-scheme and $t\in E[N](S)$ an $N$-torsion section, satisfying the following finiteness condition at the Tate curve over $A$:
\[
	F_{(\Tate(q^N),q)}=\sum_{r+s=k} a_{r,s}(q)\cdot \omega_{can}^{\otimes r}\otimes u_{can}^{\otimes s}
\]
with $a_{r,s}(q)\in A\llbracket q\rrbracket\subseteq A((q))$.
\end{defin}
Finally let us remark that the two algebraic definitions of nearly holomorphic modular forms coincide: To give a geometric nearly holomorphic modular form of level $\Gamma_1(N)$ defined over $A$ is equivalent to the datum of a section
\[
	H^0\left(\bar{\Mcal}_{A},F^{k-r}\Sym^k \Hcal^1_{dR}\right).
\]
If $A\subseteq \CC$ we can further pass to the analytification and apply the Hodge decomposition to relate geometric nearly holomorphic modular forms back to the $\Ccal^\infty$-definition of nearly holomorphic modular forms.

\section{The Kronecker section}\label{sec_can}
Motivated by the work of Bannai and Kobayashi \cite{bannai_kobayashi}, we would like to give an algebraic approach to Eisenstein--Kronecker series via the Poincar\'e bundle. In particular, we have to find an algebraic substitute for the Kronecker theta function appearing in \cite{bannai_kobayashi}. One way to do this is to study algebraic theta functions as done by Bannai and Kobayashi. But unfortunately the success of this approach is limited to elliptic curves with complex multiplication. Another natural approach is to study the underlying section of the Poincar\'e bundle instead of the analytically defined Kronecker theta function. In the following, we will give a precise definition of this underlying section. It will be called \emph{Kronecker section}.\par 

\subsection{The Poincar\'e bundle}
In the following $E/S$ will be an elliptic curve. Let us write $e\colon S\rightarrow E$ for the unit section and $\pi_E\colon E\rightarrow S$ for the structure morphism. Let us recall the definition of the Poincar\'e bundle and thereby fix some notation. A \emph{rigidification} of a line bundle $\Lcal$ on $E$ is an isomorphism $r:e^*\Lcal \righteq \Ocal_S$. A morphism of rigidified line bundles is a morphism of line bundles respecting the rigidification. The dual elliptic curve $E^\vee$ is the $S$-scheme representing the connected component of the functor
\[
	T\mapsto \mathrm{Pic}(E_T/T):=\{ \text{iso. classes of rigidified line bundles } (\Lcal,r) \text{ on } E_T/T\}
\]
on the category of $S$-schemes. The $S$-scheme $E^\vee$ is again an elliptic curve. Since a rigidified line bundle has no non-trivial automorphisms, an isomorphism class of a rigidified line bundle determines the line bundle up to unique isomorphism. This implies the existence of a universal rigidified line bundle $(\Po,r_0)$ on $E\times_S E^\vee$ with the following universal property: For any rigidified line bundle of degree zero $(\Lcal,r)$ on $E_T/T$ there is a unique morphism
\[
	f:T\rightarrow E^\vee
\]
such that $(\id_E\times f)^*(\Po,r_0)\righteq (\Lcal,r)$. In particular, we obtain for any isogeny
\[
	\varphi:E\rightarrow E'
\]
the \emph{dual isogeny} as the morphism $\varphi^\vee:(E')^\vee\rightarrow E^\vee$ classifying the rigidified line bundle $(\varphi\times\id)^*\Po'$ obtained as pullback of the Poincar\'e bundle $\Po'$ of $E'$. Let $\lambda:E\righteq \Ed$ be the polarization associated with the ample line bundle $\Ocal_E([e])$. More explicitly, $\lambda$ is given as
\begin{equation}\label{EP_eqAutodual}
\begin{tikzcd}[row sep=tiny]
	\lambda: E \ar[r] & \Pic=:\Ed\\
	P\ar[r,mapsto] & \left( \Ocal_E([-P]-[e])\otimes_{\Ocal_E} \pi^*e^*\Ocal_E([-P]-[e])^{-1}, \mathrm{can} \right).
\end{tikzcd}
\end{equation}
Here, $\mathrm{can}$ is the canonical rigidification given by the canonical isomorphism
\[
	e^*\Ocal_E([-P]-[e])\otimes_{\Ocal_S} e^*\Ocal_E([-P]-[e])^{-1} \righteq \Ocal_S.
\]
We fix the identification $E\righteq \Ed$ once and for all and write again $\Po$ for the pullback of the Poincar\'e bundle along $\id\times\lambda$, i.e.
\begin{align}\label{eq_Poincare_bdl}
	(\Po,r_0):&=\left( \Ocal_{E\times E}(-[e\times E]-[E\times e] + \Delta)\otimes_{\Ocal_{E\times E}} \pi_{E\times E}^* \om_{E/S}^{\otimes -1},r_0,s_0  \right)=\\
	&=\left( \pr_1^* \Ocal_E([e])^{\otimes-1}\otimes\pr_2^*\Ocal_{E}([e])^{\otimes-1}\otimes \mu^*\Ocal_E([e])\otimes \pi_{E\times E}^* \om_{E/S}^{\otimes -1} ,r_0 \right).
\end{align}
Here, $\Delta=\ker \left(\mu:E\times E\rightarrow E\right)$ is the anti-diagonal and $r_0$ is the ridification induced by the canonical isomorphism 
\[
	e^*\Ocal_E(-[e])\righteq \om_{E/S}
\]
i.e.
\[
	r_0\colon (e\times\id)^*\Po \cong \pi_E^* e^* \Ocal_E([e])^{\otimes-1}\otimes \Ocal_{E}([e])^{\otimes-1}\otimes \Ocal_E([e])\otimes \pi_{E}^* \om_{E/S}^{\otimes -1}\cong \Ocal_E.
\]
Let us remark that the Poincar\'e bundle is also rigidified along $(\id\times e)$ by symmetry, i.e. there is also a canonical isomorphism
\[
	s_0\colon (\id\times e)^*\Po\cong \Ocal_E.
\]

\subsection{Definition of the Kronecker section}
We can now define the \emph{Kronecker section} which will serve as a purely algebraic substitute of the Kronecker theta function. The above explicit description of the Poincar\'{e} bundle gives the following isomorphisms of locally free $\Ocal_{E\times E}$-modules, i.\,e. all tensor products over $\Ocal_{E\times E}$:
\begin{align}\label{ch_EP_eq5}
	\notag \Po\otimes \Po^{\otimes -1} &= \Po \otimes \left( \Ocal_{E\times E}(-[e\times E]-[E\times e] + \Delta)\otimes \pi_{E\times E}^* \omega_{E/S}^{\otimes-1}\right)^{\otimes-1}\cong \\
	 &\cong \Po \otimes \Omega^1_{E\times E/E}([e\times E]+[E\times e]) \otimes \Ocal_{E\times E}( - \Delta)
\end{align}
The line bundle $\Ocal_{E\times E}( - \Delta)$ can be identified canonically with the ideal sheaf $\Jcal_\Delta$ of the anti-diagonal $\Delta$. If we combine the inclusion 
\[
	\Ocal_{E\times E}( - \Delta)\cong \Jcal_\Delta\hookrightarrow \Ocal_{E\times E}
\]
with \eqref{ch_EP_eq5}, we get a morphism of $\Ocal_{E\times E}$-modules
\begin{equation}\label{ch_EP_eq6}
	\Po\otimes \Po^{\otimes -1} \hookrightarrow \Po \otimes \Omega^1_{E\times \Ed/\Ed}([e\times \Ed]+[E\times e]).
\end{equation}
\begin{defin}
	Let 
	\[
		\scan\in \Gamma\left(E\times_S \Ed, \Po \otimes_{\Ocal_{E\times \Ed}} \Omega^1_{E\times \Ed/\Ed}([e\times \Ed]+[E\times e])\right)
	\]
	be the image of the identity element $\id_{\Po}$ under \eqref{ch_EP_eq6}. The section $\scan$ will be called \emph{Kronecker section}.
\end{defin}
\subsection{Translation operators} In the work of Bannai and Kobayashi, the Eisenstein--Kronecker series appear as expansion coefficients of certain translates of the Kronecker theta function. These translates are obtained by applying translation operators for theta functions to the Kronecker theta function. In the previous section we defined the Kronecker section, which serves as a substitute for the Kronecker theta function. This motivates to find similar translation operators for sections of the Poincar\'e bundle. In this section, we will define these translation operators: Let $\varphi\colon E\rightarrow E'$ be an isogeny of elliptic curves.
By the universal property of the Poincar\'e bundle, we get a unique isomorphism of rigidified line bundles
\begin{equation}\label{eq_Gamma_phi}
	\gamma_{\id,\varphi^\vee}:  (\id_{E}\times \varphi^\vee)^*\Po\righteq (\varphi\times \id_{(E')^\vee})^*\Po'.
\end{equation}
Of particular interest for us is the case $\varphi=[N]$. In this case the dual $[N]^\vee$ is just the $N$-multiplication $[N]$ on $E^\vee$. The inverse of $\gamma_{\id,\varphi^\vee}$ will be denoted by
\[
	\gamma_{\varphi,\id}:(\varphi\times\id)^*\Po'\righteq (\id\times\varphi^\vee)^*\Po.
\]
For integers $N,D\geq 1$ let us define
\[
	\gamma_{[N],[D]}: ([N]\times [D])^{*}\Po \righteq ([D]\times[N])^{*}\Po
\]
as the composition in the following commutative diagram
\begin{equation}\label{EP_diag1}
\begin{tikzcd}[column sep=huge]
	([N]\times [D])^{*}\Po \ar[r,"({[N]}\times \id)^{*}\gamma_{\id,[D]}"]\ar[d,swap,"(\id\times {[D]})^{*}\gamma_{[N],\id}"] & ([ND]\times\id)^{*}\Po \ar[d,"({[D]}\times \id)^{*}\gamma_{[N],\id}"] \\
	(\id\times[DN])^{*}\Po \ar[r,"(\id\times {[N]})^{*}\gamma_{\id,[D]}"] & ([D]\times[N])^{*}\Po.
\end{tikzcd}
\end{equation}
Indeed, this diagram is commutative since all maps are isomorphisms of rigidified line bundles and rigidified line bundles do not have any non-trivial automorphisms, i.\,e. there can be at most one isomorphism between rigidified line bundles. By the same argument we obtain the following identity for integers $N,N',D,D'\geq 1$:
\begin{align}
\label{EP_lem_gamma_c}  ([D]\times[N])^*\gamma_{[N'],[D']}\circ ([N']\times [D'])^* \gamma_{[N],[D]}&=\gamma_{[NN'],[DD']}
\end{align}	

Let us define the following translation operators. Later we will compare these algebraic translation operators for complex elliptic curves to the translation operators for theta functions studied in \cite{bannai_kobayashi}. It will turn out that both operators essentially agree.
\begin{defin}
For integers $N,D\geq 1$ and torsion sections $s\in E[N](S)$, $t\in \Ed[D](S)$ we have the following $\Ocal_{E\times_S \Ed}$-linear isomorphism 
\[
	\begin{tikzcd}
		\Ucal^{[N],[D]}_{s,t}:=\gamma_{[N],[D]}\circ(T_{s}\times T_t)^*\gamma_{[D],[N]}.: (T_s\times T_t)^*([D]\times [N])^*\Po \ar[r] &  ([D]\times [N])^*\Po.
	\end{tikzcd}
\]
 In the most important case $N=1$ we will simply write $\Ucal^{[D]}_{t}:=\Ucal^{\id,[D]}_{e,t}$. 
\end{defin}
If $T$ is an $S$-scheme and $t\in E[D](T)$ is a $T$-valued torsion point of $E$, let us write $Nt$ instead of $[N](t)$. We have the following behaviour under composition.
\begin{cor}\label{ch_EP_cor_compU}
Let $D,D',N,N'\geq1$ be integers, $s\in E[N](S)$, $s'\in E[N'](S)$ and $t\in \Ed[D](S)$, $t'\in\Ed[D'](S)$. Then:
\[
	\left(([D]\times[N])^*\Ucal^{[N'],[D']}_{Ds',Nt'}\right)\circ (T_{s'}\times T_{t'})^*([D']\times [N'])^*\Ucal^{[N],[D]}_{D's,N't}=\Ucal^{[NN'],[DD']}_{s+s',t+t'}
\]
\end{cor}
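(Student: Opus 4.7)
The plan is to verify the identity by expanding both sides into compositions of the rigidified isomorphisms $\gamma_{[?],[?]}$ and invoking the rigidity principle repeatedly.

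First I would check that both sides are morphisms between the same two line bundles. The right-hand side is an isomorphism
\[
(T_{s+s'}\times T_{t+t'})^*([DD']\times[NN'])^*\Po \longrightarrow ([DD']\times[NN'])^*\Po.
\]
For the left-hand side one uses the identity $[D]\circ T_{s'} = T_{D s'}\circ [D]$ in each factor, together with the analogous identities for $[D'], [N], [N']$, to rewrite the composite morphism
\[
([D]\times[N])\circ(T_{D's}\times T_{N't})\circ([D']\times[N'])\circ(T_{s'}\times T_{t'})
\]
as $(T_{DD'(s+s')}\times T_{NN'(t+t')})\circ([DD']\times[NN'])$, which coincides with $([DD']\times[NN'])\circ (T_{s+s'}\times T_{t+t'})$. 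This shows the sources agree; the targets match by the identical but easier computation without the translations.

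Next I would unfold $\Ucal^{[N],[D]}_{s,t}=\gamma_{[N],[D]}\circ (T_s\times T_t)^*\gamma_{[D],[N]}$ and expand each side of the claimed identity into a four-fold composition of pullbacks of the isomorphisms $\gamma_{[?],[?]}$. On the right, one applies the coherence formula \eqref{EP_lem_gamma_c} (pulled back by $(T_{s+s'}\times T_{t+t'})^*$ on the $\gamma_{[DD'],[NN']}$ factor) to refactor $\gamma_{[NN'],[DD']}$ and $\gamma_{[DD'],[NN']}$ each as a composition of a $\gamma_{[?],[?]}$ with a pullback of another. After this refactoring, both sides become compositions of the same four $\gamma$'s, pulled back along the chain of isogenies and translations, but bracketed and ordered differently.

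The final step, which is the main point, is to argue that every commutativity of $\gamma$-diagrams one needs is automatic by rigidity. Each $\gamma_{[N],[D]}$ is the unique isomorphism of rigidified line bundles between its source and target, so after pulling back by any morphism, two such isomorphisms between the same (pulled back) bundles must coincide whenever either end still carries a compatible rigidification. Since the intermediate line bundles appearing in the fourfold composition are pullbacks of rigidified bundles via translation–isogeny compositions of the form $T_a\circ[n]$, one checks rigidity after applying the natural transposition $T_a\circ[n]=[n]\circ T_b$ (with $b=a/n$ on torsion), which allows all the intermediate $\gamma$'s to be compared at configurations where the rigidification along $(e\times\id)$ or $(\id\times e)$ is preserved. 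This is essentially the same argument the paper uses to establish \eqref{EP_lem_gamma_c}, applied in the present translated setting. The main obstacle I expect is purely bookkeeping: keeping track of which translation accompanies which isogeny pullback so that the rigidity argument can be invoked correctly. Once the diagrams commute, both sides collapse to the single composite isomorphism between $(T_{s+s'}\times T_{t+t'})^*([DD']\times[NN'])^*\Po$ and $([DD']\times[NN'])^*\Po$ produced by the four $\gamma$'s, which is exactly $\Ucal^{[NN'],[DD']}_{s+s',t+t'}$.
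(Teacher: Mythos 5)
Your proposal is correct and follows exactly the route the paper takes: the paper's entire proof is the one line ``this follows immediately from the definition of $\Ucal^{[N],[D]}_{s,t}$ and \eqref{EP_lem_gamma_c}'', and your expansion — unfold the $\gamma$'s, match sources and targets, apply \eqref{EP_lem_gamma_c}, and justify the remaining commutations by rigidity after moving translations past isogeny pullbacks via $[n]\circ T_a = T_{na}\circ [n]$ — is precisely the bookkeeping that one-liner suppresses. The only point worth emphasizing is the one you already flag: translated pullbacks of rigidified bundles are not themselves rigidified, so rigidity must be invoked on the untranslated squares and the translation pulled out front afterwards (pullback of a commutative square along any morphism stays commutative), which is exactly how your transposition step should be read.
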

\begin{proof}
	This follows immediately from the definition of $\Ucal^{[N],[D]}_{s,t}$ and \eqref{EP_lem_gamma_c}.
\end{proof}

Later, we would like to apply the translation operator to the Kronecker section which is a section of the sheaf
\[
	\Po \otimes_{\Ocal_{E\times \Ed}} \Omega^1_{E\times \Ed/\Ed}([e\times \Ed]+[E\times e]).
\]
It is convenient to introduce the notation 
\[
	U^{[N],[D]}_{s,t}(s):=\left(\Ucal^{[N],[D]}_{s,t}\otimes\id_{\Omega^1_{E\times \Ed/\Ed}([e\times \Ed]+[E\times e])}\right)\Big( (T_s\times T_t)^*([D]\times [N])^*s \Big)
\]
for sections $s\in\Gamma(U,\Po \otimes_{\Ocal_{E\times \Ed}} \Omega^1_{E\times \Ed/\Ed}([e\times \Ed]+[E\times e]))$ of the Poincar\'e bundle. The resulting section $U^{[N],[D]}_{s,t}(s)$ is then a section of the sheaf
\[
	([D]\times[N])^*\Big( \Po\otimes \Omega^1_{E\times_S\Ed/\Ed}([(-Ds)\times \Ed]+[E\times(-Ns)])\Big)
\]
over the open subset $(T_s\times T_t)^{-1}([D]\times [N])^{-1}U$.

\section{Real-analytic Eisenstein series via the Poincar\'e bundle}
	The following section is the heart of the first part of the paper. It provides a functorial and purely algebraic construction of geometric nearly holomorphic modular forms which are proven to correspond to the analytic Eisenstein--Kronecker series under the Hodge decomposition on the universal elliptic curve. More precisely, for co-prime integers $N,D>1$ and non-zero torsion sections $s\in E[N](S),t\in \Ed[D](S)$ we will provide a section
		\[
			E^{k,r+1}_{s,t}\in \Gamma\left(S, \Sym^{k+r+1}_{\Ocal_S} \HdR{1}{E/S} \right).
		\]
	This construction is functorial in the test objects $(E/S,s,t)$. We will construct these sections by iteratively applying the universal connections of the Poincar\'e bundle on the universal bi-extension $E^\sharp\times_S E^\dagger$ and evaluating at the zero section. Let us remark that the symmetric powers of the first relative de Rham cohomology appear naturally, since the cotangent space $\omega_{E^\sharp/S}$ of the universal vectorial extension of $E^\sharp$ is canonically isomorphic to $\HdR{1}{E/S}$. 
	 
\subsection{The construction via the Poincar\'e bundle}	 
	  Let $E/S$ be an elliptic curve over some base scheme $S$. We denote by
	\[
		\begin{tikzcd}
			E^\sharp \ar[r,"q^\sharp"] & E & \text{and} & E^\dagger \ar[r,"q^\dagger"] & \Ed
		\end{tikzcd}
	\]
	the universal vectorial extension of $E$ and $\Ed$. Let us write $\Po^\sharp$ resp. $\Po^\dagger$ for the pullbacks of $\Po$ to $E^\sharp\times_S \Ed$ resp. $E\times_S E^\dagger$. Then, $\Po^\sharp$ resp. $\Po^\dagger$ are equipped with canonical integrable $E^\sharp$- resp. $E^\dagger$-connections $\nabla_\sharp$ resp.~$\nabla_\dagger$. Indeed, the universal vectorial extension $E^\dagger$ classifies line bundles of degree zero on $E$ equipped with an integrable connection and $(\Po^\dagger,\nabla_\dagger)$ is the universal such line bundle on $E\times_S E^\dagger$. The same construction for $E$ replaced by $E^\vee$ gives $(\Po^\sharp,\nabla_\sharp)$ on $E^\sharp\times_S E^\vee$, here note that we have a canonical isomorphism $E\righteq (E^\vee)^\vee$. Let us write $\Po^{\sharp,\dagger}$ for the pullback of $\Po$ to the universal bi-extension $E^\sharp\times_S E^\dagger$:
		\[
		\begin{tikzcd}
			& \EN^\sharp \times_{\MN} \EN^\dagger \ar[ld,"\id\times q^\dagger",swap] \ar[rd,"q^\sharp\times \id"] & \\
			\EN^\sharp \times_{\MN} E^\vee \ar[rd,"q^\sharp\times \id",swap] & & \EN \times_{\MN} \EN^\dagger \ar[ld,"\id\times q^\dagger"] \\
			& \EN \times_{\MN} \EN^\vee & 
		\end{tikzcd}
	\]
	Then, $\Po^{\sharp,\dagger}$ is in a natural way equipped with both an integrable $E^\sharp$- and an integrable $E^\dagger$-connection
	\begin{equation*}
	\begin{tikzcd}[row sep=tiny]
		\Po^{\sharp,\dagger}  \ar[rr,"(\id\times q^\dagger)^*\con{\sharp}"] & & \Po^{\sharp,\dagger}\otimes_{\Ocal_{E^\sharp\times E^\dagger}} \Omega^1_{E^\sharp\times E^\dagger/E^\sharp}\\
		\Po^{\sharp,\dagger}  \ar[rr,"(q^\sharp\times\id)^*\con{\dagger}"] & & \Po^{\sharp,\dagger}\otimes_{\Ocal_{E^\sharp\times E^\dagger}} \Omega^1_{E^\sharp\times E^\dagger/E^\dagger}.
	\end{tikzcd}
	\end{equation*}
	By abuse of notation we will write $\nabla_\sharp$ instead of  $(\id\times q^\dagger)^*\con{\sharp}$ and $\nabla_\dagger$ instead of $(q^\sharp\times\id)^*\con{\dagger}$. The cotangent space $\om_{E^\sharp/S}$ of the universal vectorial extension of $E$ is canonically isomorphic to $\HdR{1}{E/S}$, and similarly $\om_{E^\dagger/S}\cong \HdR{1}{\Ed/S}$. It is convenient to use our chosen polarization to identify $\Hcal^1_{dR}:=\HdR{1}{E/S}\cong\HdR{1}{\Ed/S}$. With these identifications, we get
	\begin{equation}\label{EP_OmegaH_eq}
	\begin{tikzcd}[row sep=tiny,column sep=small]
		\Omega^1_{E^\sharp\times E^\dagger/E^\sharp}\ar[r,"\sim"] & \pr_{E^\dagger}^*\Omega^1_{E^\dagger/S}\ar[r,"\sim"] & (\pi_{E^\sharp\times E^\dagger})^*\Hcal^1_{dR}\\
		\Omega^1_{E^\sharp\times E^\dagger/E^\dagger}\ar[r,"\sim"] & \pr_{E^\sharp}^*\Omega^1_{E^\sharp/S}\ar[r,"\sim"] & (\pi_{E^\sharp\times E^\dagger})^*\Hcal^1_{dR}.
	\end{tikzcd}
	\end{equation}
	Since both $\con{\sharp}$ and $\con{\dagger}$ are $(\pi_{E^\sharp\times E^\dagger})^{-1}\Ocal_S$-linear, we can define the following differential operators:
	\[
	\begin{tikzcd}[row sep =tiny]
		\con{\sharp}\colon \Po^{\sharp,\dagger}\otimes_{\Ocal_S}\Sym^{n}_{\Ocal_S}\HdR{1}{E/S} \ar[r,"\con{\sharp}\otimes \id"] & \Po^{\sharp,\dagger}\otimes_{\Ocal_S}\Sym^{n+1}_{\Ocal_S}\Hcal^1_{dR}\\
		\con{\dagger}\colon \Po^{\sharp,\dagger}\otimes_{\Ocal_S}\Sym^{n}_{\Ocal_S}\HdR{1}{E/S} \ar[r,"\con{\dagger}\otimes \id"] & \Po^{\sharp,\dagger}\otimes_{\Ocal_S}\Sym^{n+1}_{\Ocal_S}\Hcal^1_{dR}
	\end{tikzcd}
	\]
	Applying $\con{\sharp}$ and $\con{\dagger}$ iteratively leads to
	\begin{equation*}
		\begin{tikzcd}[column sep=huge]
		\con{\sharp,\dagger}^{k,r}\colon \Po^{\sharp,\dagger}\otimes_{\Ocal_S}\Sym^{n}_{\Ocal_S}\Hcal^1_{dR} \ar[r,"\con{\sharp}^{\circ k}\circ \con{\dagger}^{\circ r}"] & \Po^{\sharp,\dagger}\otimes_{\Ocal_S}\Sym^{n+k+r}_{\Ocal_S}\Hcal^1_{dR}.
	\end{tikzcd}
	\end{equation*}
	\begin{rem*}
	Let us remark that $\con{\sharp}$ and $\con{\dagger}$ do not commute in general, but after pullback along $(e\times e)$ the result is independent of the order of application. This explains why choosing a different order in our construction will not give any new geometric nearly holomorphic modular forms.
	\end{rem*}	
	
	 Next, let us consider the pullback of translates of the Kronecker section to the Poincar\'e bundle on the universal bi-extension. For co-prime integers $N,D> 1$ and non-zero torsion sections $s\in E[N](S),t\in \Ed[D](S)$ let us consider  
		\[
			(q^\sharp\times q^\dagger)^*U^{[N],[D]}_{s,t}(\scan)\in \Gamma\left((q^\sharp\times q^\dagger)^{-1}U, ([N]\times[D])^*\left(\Po^{\sharp,\dagger} \otimes \Omega^1_{E^\sharp\times E^\dagger /E^\dagger}\right)\right).
		\]
		where $U:=([D]\times[N])^{-1}(T_{Ds} \times T_{Nt})^{-1}\left(E\times \Ed \setminus \{ E\times{e} \amalg e\times \Ed \}\right)$. 
	Via the identification in \eqref{EP_OmegaH_eq} we obtain:	
	\[
	(q^\sharp\times q^\dagger)^*U^{N,D}_{s,t}(\scan)\in \Gamma\left((q^\sharp\times q^\dagger)^{-1}U, ([N]\times[D])^*\Po^{\sharp,\dagger} \otimes_{\Ocal_S} \Hcal^1_{dR}\right).
	\]	
	Since we have assumed that $s$ and $t$ are non-zero and $N$ and $D$ are co-prime, the morphism
	\[
		(e\times e):S=S\times_S S \rightarrow E\times_S \Ed
	\]
	factors through the open subset $U$. By iteratively applying the universal connections to the translates of the Kronecker section, we obtain the desired geometric nearly-holomorphic modular forms:
	\begin{defin}\label{def_algebraicEK}
		For co-prime integers $N,D\geq 1$, non-zero torsion sections $s\in E[N](S)$, $t\in \Ed[D](S)$ and integers $k,r\geq 0$ define
		\[
			E^{k,r+1}_{s,t}\in \Gamma\left(S, \Sym^{k+r+1}_{\Ocal_S} \Hcal^1_{dR} \right)
		\]
		via
		\[
			E^{k,r+1}_{s,t}:=(e\times e)^*\left[ \left( ([D]\times [N])^*\con{\sharp,\dagger}^{k,r}\right)\left((q^\sharp\times q^\dagger)^*U^{[N],[D]}_{s,t}(\scan)\right) \right].
		\]
		We call $E^{k,r+1}_{s,t}$ \emph{algebraic Eisenstein--Kronecker series}. This construction is obviously functorial.
		For later reference, let us also define
		\[
			\EisD:=\sum_{e\neq t\in E[D](S)} E^{k,r+1}_{s,t}.
		\]
		
	\end{defin}
	
	\subsection{The comparison result}	
	Let $N,D>1$ be co-prime as above and let $\Ecal\rightarrow \Mcal$ be the universal elliptic curve over $\Spec \ZZ[\frac{1}{ND}]$ with $\Gamma_1(ND)$-level structure. Let $s\in \Ecal[N](\Mcal)$ and $t\in \Ecal[D](\Mcal)$ be the points given by the level structure of order $N$ and $D$ on $\Ecal$. Note that $N,D>1$ co-prime implies $ND>3$ and thus the moduli problem is representable. We can give an explicit description of the analytification of $\Ecal/\Mcal$ as follows: The analytification $\Ecal(\CC)/\Mcal(\CC)$ is given by
	\[
		\Ecal(\CC)=\CC\times \HH/\ZZ\rtimes\Gamma_1(ND) \twoheadrightarrow \Mcal(\CC)=\HH/\Gamma_1(ND) 
	\]
	with coordinates $(z,\tau)\in \CC\times \HH$. The torsion sections $s$ and $t$ are given by $s=(\frac{1}{N},\tau)$ and $t=(\frac{1}{D},\tau)$. In particular, for each $\tau\in \HH$ we will view $s$ and $t$ as elements of $\frac{1}{N}\Gamma_\tau$ resp.~$\frac{1}{D}\Gamma_\tau$ with the varying lattice $\Gamma_\tau=\ZZ+\tau\ZZ$. By slightly abusing notation, let us write $s$ and $t$ for both, the torsion sections, as well as the associated elements in $\frac{1}{N}\Gamma_\tau$ resp.~$\frac{1}{D}\Gamma_\tau$. Let us recall that the classical Eisenstein--Kronecker series are defined for $r>k+1$ by the convergent series
	\[
		\tilde{e}^*_{k,r+1}(s,t;\tau):=\frac{(-1)^{k+r}r!}{A(\Gamma_\tau)^k}\sum_{\gamma\in\Gamma_\tau\setminus\{-s\}} \frac{(\bar{s}+\bar{\gamma})^k}{(s+\gamma)^{r+1}} \langle \gamma,t  \rangle
	\]
and for general integers $k,r$ by analytic continuation. The following result relates the algebraic Eisenstein--Kronecker series constructed in \cref{def_algebraicEK} to the classical Eisenstein--Kronecker series. 
	\begin{thm}\label{EP_MainThm}
		 Assigning to every test object $(E/S,s,t)$ the algebraic Eisenstein--Kronecker series (defined in \Cref{def_algebraicEK})
		 \[
		 	E^{k,r+1}_{s,t}\in\Gamma(S,\Sym_{\Ocal_S}^{k+r+1}\HdR{1}{E/S}),
		 \] 
		 gives a geometric nearly holomorphic modular form of weight $k+r+1$, order $\min(k,r)$ and level $\Gamma_1(ND)$. The classical nearly holomorphic modular forms associated to $E^{k,r+1}_{s,t} $ via the Hodge decomposition on the universal elliptic curve are the analytic Eisenstein--Kronecker series $\tilde{e}_{k,r+1}(Ds,Nt)$.
	\end{thm}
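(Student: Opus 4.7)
The plan is to reduce to the universal analytic setting over $\CC$ and match the construction with the Taylor expansion of the Kronecker theta function \eqref{eq_KroneckerTheta}.

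By the manifest functoriality of \cref{def_algebraicEK}, it suffices to verify the theorem for the universal elliptic curve $\Ecal/\Mcal$ over $\Spec\ZZ[\frac{1}{ND}]$ with $\Gamma_1(ND)$-level structure. The claim then splits into three assertions: (i) $E^{k,r+1}_{s,t}\in\Gamma(\Mcal, F^{\max(k,r)+1}\Sym^{k+r+1}\Hcal^1_{dR})$; (ii) the $q$-expansion at the Tate curve lies in the required subring $A\llbracket q\rrbracket$; and (iii) under analytification over $\CC$ and the Hodge decomposition, the associated $\Ccal^\infty$-modular form is $\tilde{e}_{k,r+1}(Ds,Nt)$. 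I would focus on (iii), since (i) and (ii) will drop out of the explicit formulas produced en route.

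For (iii), pass to the universal cover $\CC\times\HH$ of $\Ecal(\CC)/\Mcal(\CC)$. Under the standard analytic descriptions of the universal vectorial extensions $\Ecal^\sharp,\Ecal^\dagger$ as quotients of $\CC^2\times\HH$ (with $q^\sharp,q^\dagger$ projection to the first $\CC$-factor), and the theta-type trivialization of the Poincar\'e bundle over the bi-extension, the Kronecker section $\scan$ is represented (up to an explicit normalization correction) by $\Theta(z,w;\tau)\,dz$, where $\Theta$ is the Kronecker theta function of \eqref{eq_KroneckerTheta}. This identification should be established in an earlier analytic section of the paper through a direct comparison of automorphy data. Granted this dictionary, the algebraic translation operator $U^{[N],[D]}_{s,t}$ corresponds analytically to the substitution $(z,w)\mapsto(Dz+Ds,\,Nw+Nt)$, and the connections $\con{\sharp}$, $\con{\dagger}$ on the bi-extension correspond to the analytic partial derivative operators $\partial_w$, $\partial_z$ (tensored with basis elements of $\om_{\Ecal^\dagger/\Mcal}\cong\Hcal^1_{dR}$ and $\om_{\Ecal^\sharp/\Mcal}\cong\Hcal^1_{dR}$, respectively). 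Iterating $\con{\sharp}^{\circ k}\circ\con{\dagger}^{\circ r}$ and then pulling back by $(e\times e)$ therefore extracts the mixed partial derivative $\partial_z^r\partial_w^k$ of $\Theta(Dz+Ds,\,Nw+Nt;\tau)$ evaluated at the origin, which by \eqref{eq_KroneckerTheta} equals $\tilde{e}_{k,r+1}(Ds,Nt;\tau)$ up to isogeny-degree factors $D^r, N^k$ absorbed by the identifications. Statement (i) follows by tracking the $\Hcal^1_{dR}$-components generated by the iterated connection applied to the initial $\omega$-factor of $\scan$, and (ii) reduces to Jacobi's product formula for $\scan$ at $\Tate(q^{ND})$, whose coefficients manifestly lie in $A\llbracket q\rrbracket$.

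The main obstacle will be the precise analytic dictionary: matching (a) $\scan$ with the Kronecker theta function, (b) the algebraic connections $\con{\sharp},\con{\dagger}$ with the partial derivatives $\partial_w,\partial_z$, and (c) the Messing-type identifications $\om_{\Ecal^\sharp/\Mcal},\om_{\Ecal^\dagger/\Mcal}\cong\Hcal^1_{dR}$ with the Hodge decomposition basis, together with reconciling all automorphy, isogeny and normalization factors. Once these are in place, the theorem follows directly from the Bannai--Kobayashi Taylor expansion \eqref{eq_KroneckerTheta}.
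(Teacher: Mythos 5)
Your overall strategy coincides with the paper's: reduce by functoriality to the universal curve, use injectivity of the Hodge-decomposition map on algebraic sections to reduce to identifying the associated $\Ccal^\infty$-modular form fiberwise over $\CC$, trivialize the Poincar\'e bundle on the universal cover by the Kronecker theta function, and invoke the Bannai--Kobayashi expansion \eqref{eq_KroneckerTheta}. Items (a) and (c) of your ``dictionary'' are exactly \cref{EP_lemRene1} and \cref{ch_EP_prop_Uscan_expl} (and your worry about stray factors $D^r,N^k$ is unfounded: the operator applied is $([D]\times[N])^*\con{\sharp,\dagger}^{k,r}$, so the isogeny pullback commutes past the differentiation and disappears upon evaluation at $e\times e$).

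The genuine gap is item (b). The connections $\con{\sharp},\con{\dagger}$ on $\Po^{\sharp,\dagger}$ are \emph{not} represented by $\partial_w,\partial_z$ on the universal cover. The covers of $E^\sharp(\CC)$ and $E^\dagger(\CC)$ are two-dimensional, with coordinates $(z,u)$ and $(w,v)$, and by Katz's formula (\cref{eq_condagger} and \eqref{eq_consharp}) the connections act on the trivializing section by $\con{\dagger}(\trv^\dagger)=-\tfrac{v}{A}\trv^\dagger\otimes\dd z$ and $\con{\sharp}(\trv^\sharp)=-\tfrac{u}{A}\trv^\sharp\otimes\dd w$, while the ``derivative'' parts pair $\partial_z,\partial_w$ with $[\dd z]$ but $\partial_u,\partial_v$ with $[\dd\bar z]$. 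Iterating and applying the Leibniz rule therefore produces, besides the pure term $\partial_z^{r}\partial_w^{k}\Theta_{Ds,Nt}$, cross terms in which powers of $-u/A$ are differentiated by $\partial_u$; this yields the full formula of \cref{EP_prop_Ekr},
\[
E^{k,r+1}_{s,t}=\sum_{i=0}^{\min(k,r)}\binom{r}{i}\binom{k}{i}\frac{(-1)^i}{A^i}\,\tilde e_{k-i,r-i+1}(Ds,Nt)\,[\dd\bar z]^{\otimes i}\otimes[\dd z]^{\otimes(k+r+1-i)},
\]
of which your computation retains only the $i=0$ summand. For the Hodge-decomposition claim this is harmless, since the projection to $\om^{\otimes(k+r+1)}$ kills all $i>0$ terms anyway; but for your assertion (i) it is fatal: with your dictionary the section would land in $F^{k+r+1}$, i.e.\ $E^{k,r+1}_{s,t}$ would be a \emph{holomorphic} modular form, contradicting the non-holomorphy of $\tilde e_{k,r+1}$. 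The $[\dd\bar z]^{\otimes i}$ components coming from the $u,v$-directions are precisely what place the section in $F^{\max(k,r)+1}$ and no deeper, giving order $\min(k,r)$. (The paper also gives a purely algebraic proof of the filtration bound using $\Omega^1_{E\times E^\dagger/E^\dagger}=\pi^*F^1\Hcal^1_{dR}$, which you could substitute for your sketch of (i); and it deduces your (ii) directly from finiteness of the analytic series at the cusps rather than from a product formula.)
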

\subsection{The proof of the comparison result}
	The construction of  $E^{k,r+1}_{s,t}$ is compatible with base change. Thus, $E^{k,r+1}_{s,t}$ is uniquely determined by its value on the universal elliptic curve $\Ecal/\Mcal$ with $\Gamma_1(ND)$-level structure. Further, the map
		\[
		\begin{tikzcd}[column sep=small]
			\Sym^{k+r+1}_{\Ocal_{\Mcal}} \HdR{1}{\Ecal/\Mcal}\otimes_{\Ocal_{\Mcal}}\Ccal^\infty(\Mcal(\CC))\ar[r,twoheadrightarrow] & \om_{\Ecal/\Mcal}^{\otimes (k+r+1)}\left( \Ccal^\infty \right).
		\end{tikzcd}
		\]			
	induced by the Hodge decomposition on the universal elliptic curve is injective. It remains to identify the $\Cinfty$-modular form associated with $E^{k,r+1}_{s,t}$ with the Eisenstein-Kronecker series
	\[
			\tilde{e}_{k,r+1}(Ds,Nt)\dd z^{\otimes (k+r+1)}.
	\]
	We can check this identification fiber-wise and reduce the proof to the case of a single elliptic curve $E/\CC$ with $\Gamma_1(ND)$-level structure. More precisely, we fix the following setup: Let $N,D>1$ co-prime integers. Let $E/\CC$ be an elliptic curve with complex uniformization
	\[
		E(\CC)\righteq \CC/\Gamma,\quad \Gamma:=\ZZ+\tau\ZZ, \tau\in \HH
	\]
	and fixed points $s\in E(\CC)[N]$ and $t\in E(\CC)[D]$ of order $N$ resp. $D$. Again by abuse of notation, let us write $s$ and $t$ for both $s=\frac{1}{N}$ and $t=\frac{1}{D}$ and the associated $\CC$-valued points $s,t\in E(\CC)=\CC/\Gamma$.
	\subsubsection{Analytification of the Poincar\'{e} bundle}\label{subsec_Analitification}
	The complex uniformization and our chosen auto-duality isomorphism establish
	\[
		\CC\times \CC \twoheadrightarrow E(\CC)\times \Ed(\CC)
	\]
	as universal covering. Let us denote the coordinates on $\CC\times\CC$ by $(z,w)$. The explicit description of the Poincar\'e bundle in \eqref{eq_Poincare_bdl}	allows us to trivialize its pullback $\PoC$ to this universal covering using the \emph{Kronecker theta function}
	\[
		\Theta(z,w):=\frac{\theta(z+w)}{\theta(z)\theta(w)},\quad  \theta(z):=\exp\left(-\frac{e_2^*}{2}z^2\right)\sigma(z)
	\]
	as follows:
	\[
		\Ocal_{\CC\times\CC}\righteq \PoC,\quad 1\mapsto \trv:=\frac{1}{\Theta(z,w)}\otimes (\dd z)^\vee.
	\]
	The canonical isomorphism $\gamma_{\id,[D]}$ induces an isomorphism
	\[
			\tilde{\gamma}_{\id,[D]}: (\id\times [D])^* \PoC\righteq  ( [D]\times\id)^*\PoC
	\]
	on the pullback $\PoC$ to the universal covering.
	\begin{lem}[{\cite[Lemma 3.5.10]{rene}}]\label{EP_lemRene1}
		The isomorphism $\tilde{\gamma}_{\id,[D]}$ is given by
		\[
			(\id\times [D])^* \PoC\righteq  ( [D]\times\id)^*\PoC,\quad (\id\times [D])^*\trv \mapsto  ( [D]\times\id)^*\trv.
		\]
	\end{lem}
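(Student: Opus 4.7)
My strategy is to invoke the defining universal property of $\gamma_{\id,[D]}$: it is characterized as the \emph{unique} isomorphism of rigidified line bundles $(\id\times[D])^*\Po\righteq([D]\times\id)^*\Po$ on $E\times\Ed$. Since rigidified line bundles admit no non-trivial automorphisms, it suffices to exhibit \emph{some} isomorphism of rigidified line bundles with this source and target; uniqueness then forces it to coincide with $\gamma_{\id,[D]}$, and its pullback to $\CC\times\CC$ realizes $\tilde{\gamma}_{\id,[D]}$. The natural candidate comes from decreeing $(\id\times[D])^*\trv\mapsto([D]\times\id)^*\trv$ on the universal cover, so the task reduces to checking that this candidate (a) descends to $E\times\Ed$ and (b) respects the canonical rigidifications along $e\times\id$.

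For (a), I would make explicit the automorphy factor $c(\gamma,\gamma';z,w)$ that describes the $\Gamma\times\Gamma$-action on $\PoC$ in the trivialization $\trv$. Concretely, $c$ is computed as the ratio $\Theta(z+\gamma,w+\gamma')/\Theta(z,w)$, which in turn is controlled by the quasi-periodicity of $\sigma$ together with the Gaussian correction $\exp\!\left(-\tfrac{e_2^*}{2}z^2\right)$ built into $\theta$ (the correction is what makes the resulting automorphy factor bilinear-exponential and symmetric in $(z,w)$). Pulling back this cocycle along $(z,w)\mapsto(z,Dw)$ and along $(z,w)\mapsto(Dz,w)$ yields two cocycles on $\CC\times\CC$ for the standard deck action, namely $c(\gamma,D\gamma';z,Dw)$ and $c(D\gamma,\gamma';Dz,w)$ respectively. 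The key identity is that these two expressions coincide; this is a direct consequence of the symmetry $\Theta(z,w)=\Theta(w,z)$ together with the bilinearity in $(z,\bar w)$ of $\log\langle z,w\rangle$, and gives descent of the map $\trv\mapsto\trv$ to an honest isomorphism of line bundles on $E\times\Ed$.

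For (b), I would verify compatibility with the rigidifications inherited from the canonical rigidifications of $\Po$ along $e\times\id$. Since $1/\Theta(z,w)$ vanishes to order one along $z=0$ (its leading behaviour there being $z/\theta(w)\cdot\theta(w) = z$ times a unit), the section $\trv$ pairs with $(dz)^\vee$ to give the canonical rigidification $r_0$ after normalization; the substitutions $w\mapsto Dw$ on the source side and $z\mapsto Dz$ on the target side preserve this leading behaviour compatibly because $[D]\circ e=e$. Thus both trivializations are normalized to match the same canonical rigidification on $\Ed$, completing the verification. The candidate then equals $\gamma_{\id,[D]}$ by uniqueness.

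The principal technical obstacle is the cocycle-matching identity in step (a): one must track the transformation law of $\sigma$ under $z\mapsto z+\gamma$ carefully and check that, after combining the two contributions from $\theta(z+w)$, $\theta(z)$, $\theta(w)$ and the Gaussian factor, the residual exponential depends on $(\gamma,\gamma';z,w)$ only through the symmetric pairing $\langle\gamma,w\rangle\langle z,\gamma'\rangle^{-1}$ (or an equivalent form), which manifestly intertwines the substitutions $(\gamma,D\gamma';z,Dw)$ and $(D\gamma,\gamma';Dz,w)$. Beyond this computation the argument is routine bookkeeping.
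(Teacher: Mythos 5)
Your proposal follows essentially the same route as the paper: exhibit the map $\trv\mapsto\trv$ on the universal cover, check it descends and respects the rigidifications (the latter reducing to the normalization $\theta'(0)=1$, exactly as you argue via the leading behaviour of $1/\Theta(z,w)$ along $z=0$), and conclude by uniqueness of isomorphisms of rigidified line bundles. The only difference is that you spell out the cocycle-matching needed for descent, which the paper asserts with a reference to Scheider's thesis.
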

	\begin{proof}
		The proof is an adaptation of \cite[Lemma 3.5.10]{rene}. The map
			\[
			(\id\times [D])^*\trv \mapsto  ( [D]\times\id)^*\trv.
		\]
		on the universal covering descents to an isomorphism
		\[
			(\id\times [D])^* \Po^{an}\righteq  ( [D]\times\id)^*\Po^{an}
		\]
		to the analytification of the Poincar\'e bundle. It is straight-forward to check that this map respects the rigidifications of the Poincar\'e bundle. Indeed, this boils down to the fact that the theta function $\theta(z)$ used in the definition of $\Theta(z,w)$ is a normalized theta function, i.e. $\theta'(z)|_{z=0}=1$. Now the claim follows from the fact that $\gamma_{\id,[D]}$ is the unique isomorphism between $(\id\times [D])^* \Po$ and $( [D]\times\id)^*\Po$ which is compatible with the rigidifications.
	\end{proof}	
	The pullback of the Kronecker section $\scan$ to the universal covering gives a meromorphic section $\scant$ of $\PoC\otimes \Omega^1_{\CC}$. By its very construction it can be written explicitly as
		\[
			\scant=\Theta(z,w)\trv\otimes \dd z.
		\]
	The following result proves that the purely algebraic translation operators are compatible with the analytic translation operators defined in \cite[\S 1.3]{bannai_kobayashi}:
	\begin{prop}\label{ch_EP_prop_Uscan_expl}
		The pullback of $U^{N,D}_{s,t}(\scan)$ to the universal covering is given by the explicit formula
		\[
			([ D]\times[ N])^*\left( \Theta_{Ds,Nt}(z,w) \trv \otimes \dd z \right).
		\]
		Here we denote by $\Theta_{Ds,Nt}(z,w)$ the translates of the Kronecker theta function as defined in \cite[\S 1.3]{bannai_kobayashi}.
	\end{prop}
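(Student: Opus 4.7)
The plan is to verify the formula on the universal covering $\CC\times\CC$, where both $\scant$ and the various gamma isomorphisms admit explicit descriptions in terms of the trivialization $\trv$ and the Kronecker theta function.

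First, I would establish that on the cover all the gamma isomorphisms appearing in $\Ucal^{[N],[D]}_{s,t}=\gamma_{[N],[D]}\circ(T_s\times T_t)^*\gamma_{[D],[N]}$ act as the identity on $\trv$. The statement for $\gamma_{\id,[D]}$ is precisely \cref{EP_lemRene1}. The analogue for $\gamma_{[N],\id}$ is obtained by the entirely parallel argument, using the symmetry $\Theta(z,w)=\Theta(w,z)$ together with the polarization identification $E\cong E^\vee$; alternatively, one can reprove it by the same theta-function normalization argument. Combining these via the commutative diagram \eqref{EP_diag1} defining $\gamma_{[N],[D]}$, one concludes that $\gamma_{[N],[D]}$ sends $([N]\times[D])^*\trv$ to $([D]\times[N])^*\trv$, and analogously for $\gamma_{[D],[N]}$ and its pullback under $T_s\times T_t$.

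Second, I would carry out the direct computation. Using the relation $([D]\times[N])\circ(T_s\times T_t)=(T_{Ds}\times T_{Nt})\circ([D]\times[N])$, which on the cover is the coordinate change $(z,w)\mapsto(Dz+Ds,Nw+Nt)$, we obtain
\[
(T_s\times T_t)^*([D]\times[N])^*\scant=\Theta(Dz+Ds,Nw+Nt)\cdot(T_s\times T_t)^*([D]\times[N])^*\trv\otimes D\,\dd z.
\]
Applying $\Ucal^{[N],[D]}_{s,t}$ only re-identifies the pulled-back trivialization with $([D]\times[N])^*\trv$, so
\[
U^{[N],[D]}_{s,t}(\scant)=\Theta(Dz+Ds,Nw+Nt)\cdot([D]\times[N])^*\trv\otimes D\,\dd z.
\]
On the other hand, the claimed right-hand side, pulled back to the cover, equals $\Theta_{Ds,Nt}(Dz,Nw)\cdot([D]\times[N])^*\trv\otimes D\,\dd z$. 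Thus the proposition reduces to the identity $\Theta_{s,t}(z,w)=\Theta(z+s,w+t)$.

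The main obstacle is this last identity, which requires comparing Bannai--Kobayashi's definition of $\Theta_{s,t}(z,w)$ via the Taylor expansion \eqref{eq_KroneckerTheta} with the analytic translate $\Theta(z+s,w+t)$; this is the classical fact that the iterated derivatives of the Kronecker theta function at $(s,t)$ compute the Eisenstein--Kronecker numbers $\tilde{e}_{a,b+1}(s,t)$, proved in \cite{bannai_kobayashi}. A secondary point requiring care is the tracking of trivializations through the successive pullbacks by $[D]\times[N]$ and $T_s\times T_t$, which is exactly where the rigidification content of \cref{EP_lemRene1} (and its symmetric analogue) is used.
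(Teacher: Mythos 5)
Two steps in your argument are each incorrect, and the errors compensate, so the formula happens to come out \emph{looking} right for the wrong reason.

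First, you assert that applying $\Ucal^{[N],[D]}_{s,t}$ ``only re-identifies the pulled-back trivialization with $([D]\times[N])^*\trv$''. This is false. The composite $\gamma_{[N],[D]}\circ(T_s\times T_t)^*\gamma_{[D],[N]}$ passes through the intermediate trivialization $(T_s\times T_t)^*([N]\times[D])^*\trv$, and on the universal cover this is \emph{not} equal to $([N]\times[D])^*\trv$: the two differ by a non-constant automorphy factor, forced by the quasi-periodicity of the Kronecker theta function under a lattice shift $(Nz,Dw)\mapsto(Nz+1,Dw+1)$. The paper computes it in \eqref{EP_eq1} using the transformation law of $\theta$:
\[
(T_{s}\times T_{t})^*([N]\times[D])^*\trv = \exp\left( -\frac{Nz+Dw+1}{A} \right)([N]\times[D])^*\trv.
\]
Since $\gamma_{[N],[D]}$ is $\Ocal$-linear, this exponential survives into $\Ucal^{[N],[D]}_{s,t}$; in other words, $\Ucal^{[N],[D]}_{s,t}$ is \emph{not} the canonical identification induced by $[N]\circ T_s=[N]$ and $[D]\circ T_t=[D]$, but differs from it by exactly this exponential.

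Second, and correspondingly, the identity $\Theta_{s,t}(z,w)=\Theta(z+s,w+t)$ you invoke is false. Bannai--Kobayashi's translate $\Theta_{s,t}$ carries a built-in multiplicative cocycle; in the scaled variables relevant here,
\[
\Theta_{Ds,Nt}(Dz,Nw)=\exp\left(-\frac{Nz+Dw+1}{A}\right)\Theta(Dz+Ds,Nw+Nt),
\]
and the exponential you dropped at the first step is precisely what is needed to produce this prefactor. Note also that the Laurent-expansion result of Bannai--Kobayashi that you cite (iterated derivatives of $\Theta_{s,t}$ at the origin compute $\tilde e_{a,b+1}(s,t)$) is a separate statement, needed later in \cref{EP_prop_Ekr}, and is not the content of the would-be identity $\Theta_{s,t}=\Theta(\,\cdot+s,\,\cdot+t)$. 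The paper's proof otherwise runs along your lines --- \cref{EP_lemRene1} (and its symmetric analogue, through the diagram \eqref{EP_diag1}) for the $\gamma$'s acting identically on $\trv$, then the same unwinding of $\Ucal$ --- but carries the automorphy factor from \eqref{EP_eq1} through rather than dropping it, and then identifies the result with $\Theta_{Ds,Nt}(Dz,Nw)$ by its actual definition.
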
	
	\begin{proof}
		Let us write $\tilde{U}^{N,D}_{s,t}$ for the pullback of the analytification of the translation operator. Recall, that $s$ and $t$ denote the torsion points of $E$ given by $s=\frac{1}{N}$ and $t=\frac{1}{D}$ and that $A:=A(\Gamma):=\frac{\im \tau}{\pi}$ denotes the volume of $E(\CC)$ divided by $\pi$. Before we give an explicit description of $\tilde{U}^{N,D}_{s,t}(\scant)$, let us do the following computation:
		\begin{align}\label{EP_eq1}
			\notag (T_{s}\times T_{t})^*\left([ N]\times [ D]\right)^*(\trv)&=\frac{1}{\Theta(Nz+1,Dw+1)} \otimes ([N]\times[D])^*(\dd z)^\vee\stackrel{(*)}{=}\\
			 &=\exp\left( -\frac{Nz+Dw+1}{A} \right) ([ N]\times[ D])^*(\trv)
		\end{align}
		Here $(*)$ follows from the transformation law of the classical theta function \cite[eq. (8)]{bannai_kobayashi}:
		\[
			\theta(z+\gamma)=\alpha(\gamma)\cdot \exp\left( \frac{z\bar{\gamma}}{A}+\frac{\gamma\bar{\gamma}}{2A} \right) \theta(z)
		\]
		By the definition of $\Theta_{Ds,Nt}(z,w)$ we have
		\begin{align*}
			&\Theta_{D{s},N{t}}(Dz,Nw):=\exp\left( -\frac{Nz+Dw+1}{A} \right) \Theta(Dz+D{s},Nw+N{t},\tau).
		\end{align*}
		The definition
		\[
			U^{N,D}_{s,t}(\scan):=(\gamma_{N,D}\otimes \id_\Omega)\left( (T_{s}\times T_{t})^*\left[(\gamma_{D,N}\otimes \id_\Omega)\left( ([D]\times[N])^*\scan \right)  \right] \right)
		\]
		gives us the following explicit description of $\tilde{U}^{N,D}_{s,t}(\scant)$:\\
\resizebox{\linewidth}{!}{
  \begin{minipage}{\linewidth}		
		\begin{align*}
			&\tilde{U}^{N,D}_{s,t}(\scant)= (\tilde{\gamma}_{N,D}\otimes \id_\Omega)\left( (T_{s}\times T_{t})^*\left[(\tilde{\gamma}_{D,N}\otimes \id_\Omega)\left( ([D]\times[N])^*\scant \right)  \right] \right)\stackrel{\text{Lem.}\ref{EP_lemRene1}}{=}\\
			&=(\tilde{\gamma}_{N,D}\otimes \id_\Omega)\left( (T_{s}\times T_{t})^*\left[ \Theta(Dz,Nw)([N]\times[D])^*(\trv\otimes \dd z)  \right] \right)=\\
			&=(\tilde{\gamma}_{N,D}\otimes \id_\Omega)\big(  \Theta(Dz+D{s},Nw+N{t})(T_{s}\times T_{t})^*([N]\times[D])^*(\trv\otimes \dd z) \big)\stackrel{\eqref{EP_eq1}}{=}\\
			&=(\tilde{\gamma}_{N,D}\otimes \id_\Omega)\left( \Theta_{D{s},N{t}}(Dz,Nw)([N]\times[D])^*(\trv\otimes \dd z)  \right)=\\
			&=\Theta_{D{s},N{t}}(Dz,Nw)([D]\times[N])^*(\trv\otimes \dd z)=\\
			&=([D]\times[N])^*\left( \Theta_{D{s},N{t}}(z,w) \trv\otimes \dd z\right)			
		\end{align*}
	\end{minipage}}
	\end{proof}
	
	The analytification of the universal vectorial extension $E^\dagger$ of $\Ed$ sits in a short exact sequence (cf. \cite[Ch I, 4.4]{mazur_messing})
\[
		\begin{tikzcd}
			0\ar[r] & R^1(\pi_{E}^{\textit{an}})_*(2\pi i\ZZ)\ar[r] & \HdRabs{1}{E}\ar[r] & \EN^{\dagger,an}\ar[r] & 0.
		\end{tikzcd}
	\]
	In particular, the two dimensional complex vector space $\HdRabs{1}{E}$ serves as a universal covering of $E^\dagger(\CC)$. Choosing coordinates on this universal covering is tantamount to choosing a basis of
	\[
		\HdRabs{1}{E}^\vee\righteq \HdRabs{1}{\Ed}.
	\]
	Here, this isomorphism is canonically induced by Deligne's pairing. Let us choose $[\dd w]$ and $[\dd \bar{w}]$ in $\HdRabs{1}{\Ed}$ as a basis and denote the resulting coordinates by $(w,v)$. We can summarize the resulting covering spaces in the following commutative diagram:
	\[
		\begin{tikzcd}
			\CC^2 \ar[r,"\pr_1"]\ar[d] & \CC \ar[d]\\
			E^{\dagger}(\CC)\ar[r] & \Ed(\CC).
		\end{tikzcd}	
	\]
	The pullback of the Poincar\'e bundle $\Po^\dagger$ to $E\times E^\dagger$ is equipped with a canonical integrable $E^\dagger$-connection
	\[
		\con{\dagger}:\Po^\dagger\rightarrow \Po^\dagger\otimes_{\Ocal_{E\times E^\dagger}} \Omega^1_{E\times E^\dagger/E^\dagger}.
	\]	
	Let us write $\PoC^\dagger$ for the pullback of the analytification of $\PoC^\dagger$ to the universal covering. The trivializing section $\trv$ of $\PoC$ induces a trivializing section $\trv^\dagger$ of $\PoC^\dagger$.
	\begin{lem}\label{eq_condagger}
		\[
			\con{\dagger}(\trv^\dagger)=-\frac{v}{A}\trv^\dagger\otimes \dd z
		\]
	\end{lem}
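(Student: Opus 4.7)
The plan is to compute $\con{\dagger}(\trv^\dagger)$ directly from the universal property of $E^\dagger$, expressed in the analytic coordinates on the universal cover $\HdRabs{1}{E}\cong\CC^2$ of $E^\dagger(\CC)$. Recall that $E^\dagger$ fits in the short exact sequence
\[
0\longrightarrow H^0(E,\Omega^1)\longrightarrow E^\dagger(\CC)\longrightarrow \Ed(\CC)\longrightarrow 0,
\]
so that a point $(w,v)\in\CC^2$ represents a line bundle $\Lcal_w$ on $E$ (depending only on the projection $w$) equipped with an integrable connection; two lifts of the same $\Lcal_w$ differ by a global $1$-form in $H^0(E,\Omega^1)=\CC\cdot \dd z$. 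The universal pair $(\Po^\dagger,\con{\dagger})$ restricts along any section $t\colon S\to E^\dagger$ to the line bundle with connection classified by $t$, so once I identify the $v$-direction with an explicit element of $H^0(E,\Omega^1)$, the formula follows by bilinearity in $(w,v)$.

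First, I would pin down the normalization. The choice of basis $[\dd w],[\dd\bar w]\in\HdRabs{1}{\Ed}\cong \HdRabs{1}{E}^\vee$ makes $(w,v)$ the dual coordinates via Deligne's pairing; under the auto-duality $\lambda\colon E\righteq \Ed$ and the $\Ccal^\infty$-Hodge decomposition $\HdRabs{1}{E}=\CC\cdot[\dd z]\oplus\CC\cdot[\dd\bar z]$, the $v$-coordinate picks out the class represented by the anti-holomorphic form. A short computation with Deligne's pairing (using $\int_E \dd z\wedge \dd\bar z = -2\pi i A$) shows that the class in $H^0(E,\Omega^1)$ associated to the $v$-direction is exactly $-\tfrac{1}{A}\dd z$; this is where the denominator $A(\Gamma)=\operatorname{Im}\tau/\pi$ enters.

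Second, I would restrict $\trv^\dagger$ to the section $\{(w_0,v_0)\}\hookrightarrow E^\dagger$. Since $\trv^\dagger = \Theta(z,w)^{-1}\otimes(\dd z)^\vee$ is the pullback of $\trv$ along $\id\times q^\dagger$, this restriction is a holomorphic trivialization of $\Lcal_{w_0}$ on $E$, and the holomorphic connection on $\Lcal_{w_0}$ (corresponding to the lift $(w_0,0)$) is tautologically the one encoded by the Kronecker theta function itself, i.e.\ it is zero in this trivialization (equivalently, the $w_0$-contribution is absorbed into the theta factor). Changing the lift to $(w_0,v_0)$ adds the $1$-form $-\tfrac{v_0}{A}\dd z$ identified in the previous step, so that
\[
\con{\dagger}(\trv^\dagger)\big|_{(w_0,v_0)}= -\frac{v_0}{A}\,\trv^\dagger\big|_{(w_0,v_0)}\otimes \dd z.
\]
Varying $(w_0,v_0)$ and using that both sides are $\Ocal$-linear and holomorphic in the $E^\dagger$-direction gives the claimed global identity.

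The main obstacle is precisely the normalization in the previous paragraph: one must chase the identifications $\omega_{E^\dagger}\cong\HdRabs{1}{\Ed}\cong\HdRabs{1}{E}^\vee$, the auto-duality $\lambda$, the Deligne pairing and the Hodge decomposition consistently, since a sign or factor error anywhere changes the coefficient $-\tfrac{1}{A}$. The cleanest verification is to compare with the explicit formula for the universal connection worked out in Scheider's thesis (from which Lemma \ref{EP_lemRene1} was already adapted), which establishes exactly this normalization on the analytic universal cover.
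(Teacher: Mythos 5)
Your overall strategy is genuinely different from the paper's: you want to compute $\con{\dagger}(\trv^\dagger)$ fibrewise over $E^\dagger$, using the moduli interpretation of $E^\dagger$ (connections on a fixed $\Lcal_{w_0}$ form a torsor under $H^0(E,\Omega^1)$) together with the identification of the $v$-direction with a specific global $1$-form. The paper instead quotes Katz's explicit formula for the universal connection in his coordinates, performs the coordinate change $(w_{\mathrm{Katz}},v_{\mathrm{Katz}})\mapsto(w,v)$, and cancels the $\zeta$-function terms against $\partial_z\log\Theta$. Your route is conceptually cleaner in that it isolates the two normalizations that matter, but as written it has a gap at its central step.

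The gap is in your second step. It is \emph{not} tautological that the lift $(w_0,0)$ — i.e.\ the holomorphic splitting $\{v=0\}\subseteq \HdRabs{1}{E}$ of the Mazur--Messing sequence determined by your basis — classifies the connection on $\Lcal_{w_0}$ for which $\trv^\dagger=\Theta(z,w_0)^{-1}\otimes(\dd z)^\vee$ is horizontal. What is tautological is only that the restriction of $(\Po^\dagger,\con{\dagger})$ to $E\times\{(w_0,v_0)\}$ is the pair classified by that point; \emph{which} connection the point $(w_0,0)$ classifies requires unwinding the Mazur--Messing identification (a class $\xi\in\HdRabs{1}{E}$ corresponds to the line bundle with monodromy $\exp(\langle\xi,\gamma\rangle)$, up to the $2\pi i$-normalization) and comparing with the automorphy factor $\exp(w_0\bar\gamma/A)$ of $\Theta(z,w_0)$ in $z$. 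That comparison is exactly the content of the lemma — it is where the Hodge-theoretic splitting meets the theta-function trivialization — and is of the same order of difficulty as the computation the paper actually performs. Your closing sentence, which proposes to "verify the normalization" by comparing with the explicit formula for the universal connection in Scheider's thesis, in effect concedes this: that explicit formula is precisely what the paper invokes (from Katz) and is not an independent check of your argument but a replacement for it. The same caveat applies, less seriously, to your first step: the coefficient $-\tfrac{1}{A}$ for the $v$-direction is asserted rather than derived, and both the sign and the factor depend on the normalizations of Deligne's pairing and of the lattice $R^1\pi_*(2\pi i\ZZ)$, which is exactly where such arguments typically go wrong. To make the proof complete you would need to carry out both computations explicitly rather than labelling the first as "tautological" and outsourcing the second.
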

	\begin{proof}
		We use the description of the connection given by Katz \cite[Thm. C.6 (1)]{katz_eismeasure}. Katz uses different coordinates: The basis $([\eta]^\vee,[\omega]^\vee)$ of $\HdRabs{1}{E}^\vee$ gives coordinates $(w_{\mathrm{Katz}},v_{\mathrm{Katz}})$ on the universal covering of $E^\dagger(\CC)$. Comparing both bases it is straight-forward to check that these coordinates are related to our coordinates via
		\begin{align*}
				w_{\mathrm{Katz}}&=-w\\
				v_{\mathrm{Katz}}&=-\frac{v}{A}+w\cdot\left(\frac{1}{A}+\eta(1,\tau)\right).
		\end{align*}
		 The explicit description of the connection in \cite[Thm. C.6 (1)]{katz_eismeasure} immediately implies the following formula:
		\begin{align*}
			\con{\dagger}(\trv^\dagger)&=\left[-\frac{\partial_z\Theta(z,-w_{\mathrm{Katz}})}{\Theta(z,-w_{\mathrm{Katz}})} + (\zeta(z-w_{\mathrm{Katz}})-\zeta(z)+v_{\mathrm{Katz}})) \right] \trv^\dagger\otimes \dd z
		\end{align*}
		Using
		\begin{align*}
			\frac{\partial_z\Theta(z,-w_{\mathrm{Katz}})}{\Theta(z,-w_{\mathrm{Katz}})}&=\partial_z \log \Theta(z,-w_{\mathrm{Katz}})=\\
			&=-w_{\mathrm{Katz}}\cdot\left(\frac{1}{A}+\eta(1,\tau)\right)+\zeta(z-w_{\mathrm{Katz}})-\zeta(z)
		\end{align*}
		we get
		\begin{align*}
			\con{\dagger}(\trv^\dagger)&=\left[ v_{\mathrm{Katz}}+ w_{\mathrm{Katz}}\cdot\left(\frac{1}{A}+\eta(1,\tau)\right)   \right] \trv^\dagger\otimes \dd z
		\end{align*}
		and the result follows by expressing this in our coordinates.
	\end{proof}
	Now, let $E^\sharp$ be the universal vectorial extension of $\Ed$.	Our chosen auto-duality isomorphism $E\righteq \Ed$ induces an isomorphism
	\[
		E^\sharp\righteq E^\dagger.
	\]
	The explicit description of the universal covering of $E^\dagger(\CC)$ gives us a universal covering 
	\[
		\CC^2\twoheadrightarrow E^\sharp(\CC)
	\]
	of $E^\sharp(\CC)$. Let us write $(z,u)$ for the corresponding coordinates. Let us write $\PoC^\dagger$ for the pullback of the analytification of $\PoC^\dagger$ to the universal covering. The trivializing section $\trv$ of $\PoC$ induces a trivializing section $\trv^\sharp$ of $\PoC^\sharp$. By transport of structure we deduce
	\begin{equation}\label{eq_consharp}
			\con{\sharp}(\trv^\sharp)=-\frac{u}{A}\trv^\sharp\otimes \dd w.
	\end{equation}
	\begin{prop}\label{EP_prop_Ekr}
		Let $E=\CC/\ZZ+\tau\ZZ$ and $s,t\in E[ND](\CC)$ as above. The algebraic Eisenstein-Kronecker series $E^{k,r+1}_{s,t}\in \Sym^{k+r+1}\HdRabs{1}{E}$ are given by the explicit formula
		\[
			E^{k,r+1}_{s,t}=\sum_{i=0}^{\min(k,r)} \binom{r}{i}\binom{k}{i} \frac{(-1)^i}{A^i}\tilde{e}_{k-i,r-i+1}(Ds,Nt;\tau) \cdot[\dd \bar{z}]^{\otimes i}\otimes [\dd z]^{\otimes k+r+1-i}.
		\]
	\end{prop}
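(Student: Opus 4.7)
The plan is to work entirely on the universal cover $\CC^4$ of $E^\sharp\times E^\dagger$ with coordinates $(z,u,w,v)$, where $(z,u)$ lifts $E^\sharp$ and $(w,v)$ lifts $E^\dagger$. By \Cref{ch_EP_prop_Uscan_expl}, the pullback of $(q^\sharp\times q^\dagger)^*U^{[N],[D]}_{s,t}(\scan)$ to this cover equals $([D]\times[N])^*\bigl(\Theta_{Ds,Nt}(z,w)\,\trv^{\sharp,\dagger}\otimes dz\bigr)$. Setting $F(z,w):=\Theta_{Ds,Nt}(z,w)$, the computation reduces to applying $\con{\sharp}^{\circ k}\circ\con{\dagger}^{\circ r}$ to $F\cdot\trv^{\sharp,\dagger}\otimes dz$ and restricting along $(e\times e)$, with the $([D]\times[N])^*$-rescaling reabsorbed at the end.

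The core input is \Cref{eq_condagger} and \eqref{eq_consharp}, giving $\con{\dagger}(\trv^{\sharp,\dagger})=-\tfrac{v}{A}\trv^{\sharp,\dagger}\otimes dz$ and $\con{\sharp}(\trv^{\sharp,\dagger})=-\tfrac{u}{A}\trv^{\sharp,\dagger}\otimes dw$. The crucial subtlety is that $\con{\dagger}$ lands in $\Omega^1_{E^\sharp\times E^\dagger/E^\dagger}\cong\pi^*\om_{E^\sharp}$, which has rank two with universal-cover basis $(dz,du)$; the Leibniz rule applied to a general section $g\,\trv^{\sharp,\dagger}$ therefore produces both a $dz$- and a $du$-component,
\begin{equation*}
\con{\dagger}(g\trv^{\sharp,\dagger})=(\partial_z g-\tfrac{v}{A}g)\,\trv^{\sharp,\dagger}\otimes dz+(\partial_u g)\,\trv^{\sharp,\dagger}\otimes du,
\end{equation*}
and analogously $\con{\sharp}(g\trv^{\sharp,\dagger})=(\partial_w g-\tfrac{u}{A}g)\,\trv^{\sharp,\dagger}\otimes dw+(\partial_v g)\,\trv^{\sharp,\dagger}\otimes dv$. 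Since $F$ is independent of $u$ and $v$, iterating $\con{\dagger}$ produces no $du$-contribution and yields
\begin{equation*}
\con{\dagger}^{\circ r}(F\,\trv^{\sharp,\dagger}\otimes dz)= \bigl(\partial_z-\tfrac{v}{A}\bigr)^r F\cdot \trv^{\sharp,\dagger}\otimes dz^{\otimes(r+1)}.
\end{equation*}

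Next I iterate $\con{\sharp}$ $k$ times and sort by the number $i$ of $dv$-factors. The $-ug/A$ cross-terms vanish at $u=0$ (the coefficient never acquires $u$-dependence), and $\partial_v$ and $\partial_w$ commute, so the $\binom{k}{i}$ orderings of the $v$'s and $w$'s collapse into a single monomial in $\Sym^k$ with coefficient $\binom{k}{i}\,\partial_v^i\partial_w^{k-i}\bigl[(\partial_z-v/A)^rF\bigr]$. Expanding $(\partial_z-v/A)^rF$ via the binomial theorem and applying $\partial_v^i\big|_{v=0}$ isolates the $j=r-i$ summand, producing $\binom{r}{i}(-1)^i i!/A^i\,\partial_z^{r-i}F$. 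Combining with $\partial_w^{k-i}$ and evaluating at $z=w=0$ converts this to $\tilde{e}_{k-i,r-i+1}(Ds,Nt)$ via the Taylor expansion \eqref{eq_KroneckerTheta}.

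The final step uses the identifications $dw|_0\leftrightarrow[dz]$ and $dv|_0\leftrightarrow[d\bar z]$ in $\HdRabs{1}{E}$---obtained by combining Mazur--Messing's $\om_{E^\dagger}\cong\HdRabs{1}{E^\vee}$ with the chosen polarization---to rewrite each $\Sym^{k+r+1}$-monomial in the stated basis. The main obstacle is the careful bookkeeping: tracking the $([D]\times[N])^*$-scalings of $\trv$ and $dz$, exploiting the Leibniz cross-terms that make visible the rank-two structure of the connection targets (the source of the anti-holomorphic $[d\bar z]$-components in an a priori purely algebraic/holomorphic setup), and handling the normalization convention for the symmetric-power monomial $[d\bar z]^{\otimes i}\otimes[dz]^{\otimes(k+r+1-i)}$ so that the factor $\partial_v^iv^i=i!$ from the binomial expansion cancels against the convention-dependent normalization, leaving precisely the stated coefficients $\binom{r}{i}\binom{k}{i}(-1)^i/A^i$.
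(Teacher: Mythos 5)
Your proposal follows the paper's proof step for step: pull everything back to the universal cover with coordinates $(z,u,w,v)$, invoke \cref{ch_EP_prop_Uscan_expl} for the translated Kronecker section, use \cref{eq_condagger} and \eqref{eq_consharp} for the action of $\con{\dagger}$ and $\con{\sharp}$ on the trivializing section, iterate via the Leibniz rule, observe that the $-u/A$ cross-terms die at the origin and that only the terms with exactly $i$ anti-holomorphic derivatives survive, and read off the coefficients from the Taylor expansion \eqref{eq_KroneckerTheta}. Your bookkeeping is in one respect more careful than the paper's: you consistently write $(\partial_z-v/A)^{r}$ for the iterate of $\con{\dagger}$, which is what \cref{eq_condagger} actually gives, whereas the paper's displayed computation carries $(-u/A)^i$ at that step.

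The one point you do not actually close is the $i!$. You correctly compute that $\partial_v^{\circ i}$ applied to the $(-v/A)^i$-term contributes $i!\,(-1/A)^i$, so the literal outcome of your argument is
\[
E^{k,r+1}_{s,t}=\sum_{i=0}^{\min(k,r)}\binom{r}{i}\binom{k}{i}\,i!\,\frac{(-1)^i}{A^i}\,\tilde e_{k-i,r-i+1}(Ds,Nt;\tau)\cdot[\dd\bar z]^{\otimes i}\otimes[\dd z]^{\otimes (k+r+1-i)},
\]
and you then assert that the $i!$ is absorbed by a ``convention-dependent normalization'' of the symmetric-power monomial. No such convention is in force here: the operators $\con{\sharp,\dagger}^{k,r}$ are built from $\con{\sharp}\otimes\id$ followed by the ordinary product $\Hcal^1_{dR}\otimes\Sym^{n}\Hcal^1_{dR}\to\Sym^{n+1}\Hcal^1_{dR}$, under which the monomial $[\dd\bar z]^{\otimes i}\otimes[\dd z]^{\otimes(k+r+1-i)}$ carries no hidden $1/i!$ (one checks directly, e.g.\ for $k=r=i=2$, that the $[\dd\bar z]^{\otimes 2}$-coefficient is $\tfrac{2}{A^2}\tilde e_{0,1}$, not $\tfrac{1}{A^2}\tilde e_{0,1}$). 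So as written your argument establishes the formula with the extra $i!$, not the displayed one. To be fair, the paper's own proof has exactly the same discrepancy: its penultimate line evaluates $\dd_{E^\sharp}^{\circ j}\bigl[(-u/A)^i\bigr]\big|_{u=0}$, which equals $\delta_{ij}\,i!\,(-1/A)^i[\dd\bar z]^{\otimes i}$, yet the $i!$ is absent from the final display. You have therefore isolated the only genuinely delicate point of the proof, but an appeal to an unspecified normalization is not a resolution; you need to either exhibit the convention (e.g.\ a divided-power/$\TSym$ interpretation of the monomials) or conclude that the stated formula should carry the factor $i!$.
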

	\begin{proof}
		In the following we identify the cotangent spaces of $E^\sharp$ and $E^\dagger$ with $\HdRabs{1}{E}$. More concretely, this means that we identify:
		\begin{align*}
			\dd z&=\dd w=[\dd z]\\
			\dd u&=\dd v=[\dd \bar{z}].
		\end{align*}
		Let us write
	\begin{align*}
		\dd_{E^\dagger}&: \Oan_{E^\sharp \times E^\dagger } \rightarrow \Oan_{E^\sharp \times E^\dagger}\otimes_{\CC} \HdRabs{1}{E},\quad f\mapsto \partial_w f[\dd z]+\partial_v f[\dd \bar{z}]\\
		\dd_{E^\sharp}&: \Oan_{E^\sharp \times E^\dagger } \rightarrow \Oan_{E^\sharp \times E^\dagger}\otimes_{\CC} \HdRabs{1}{E},\quad f\mapsto \partial_z f[\dd z]+\partial_u f[\dd \bar{z}]
	\end{align*}
		Our aim is to compute
		\[
			E^{k,r+1}_{s,t}=(e\times e)^*\left[ ([D]\times [N])^*\nabla^{k,r}_{\sharp,\dagger}\left( (q^\sharp\times q^\dagger)^*U^{N,D}_{s,t}(\scan) \right) \right].
		\]
		\cref{ch_EP_prop_Uscan_expl} gives an explicit description of the translation operators:
		\[
			(q^\sharp\times q^\dagger)^*U^{N,D}_{s,t}(\scan)=([D]\times[N])^*\left(\Theta_{Ds,Nt}(z,w)\trv^{\sharp,\dagger}\otimes [\dd z]\right).
		\]
		Using this we compute using the Leibniz rule and \cref{eq_condagger}:
\begin{align*}
		&E^{k,r+1}_{s,t}=(e\times e)^*\left[  ([D]\times [N])^*\left( \nabla_{\sharp,\dagger}^{k,r}\left( \Theta_{Ds,Nt}(z,w) \trv \right)\right)\right]\otimes[\dd z]=\\
		=&  (e\times e)^*\left[ \nabla_{\sharp}^{\circ k}\nabla_{\dagger}^{\circ r}\left( \Theta_{Ds,Nt}(z,w) \trv \right)\right]\otimes[\dd z]=\\
		=&  (e\times e)^*\left[\nabla_{\sharp}^{\circ k}\left( \sum_{i=0}^r  \binom{r}{i} \left( -\frac{u}{A} \right)^i \partial_z^{\circ(r-i)} \Theta_{Ds,Nt}(z,w) \trv \right)\right]\otimes[\dd z]^{\otimes (r+1)}=\\
		=&  \left.\sum_{j=0}^k \binom{k}{j} \sum_{i=0}^r  \binom{r}{i} \dd_{E^\sharp}^{\circ (k-j)} \left[ \left(-\frac{u}{A}\right)^i \partial_z^{\circ(r-i)} \Theta_{Ds,Nt}(z,w)\right]\right|_{\substack{z=v=0\\ w=u=0}}\cdot \\
		&\quad \quad\quad \cdot \left.\left( -\frac{v}{A} \right)^j\right|_{v=0} \otimes [\dd z]^{\otimes (r+j+1)}=
\end{align*}
	at this point let us observe, that $\left.\left( -\frac{v}{A} \right)^j\right|_{v=0}=0$ for $j>0$. Using this we continue:
\begin{align*}
		=& \left. \sum_{i=0}^r  \binom{r}{i} \dd_{E^\sharp}^{\circ k} \left[ \left(-\frac{u}{A}\right)^i \partial_z^{\circ(r-i)} \Theta_{Ds,Nt}(z,w)\right]\right|_{\substack{z=v=0\\ w=u=0}} \otimes [\dd z]^{\otimes (r+1)}=\\
		=& \left. \sum_{i=0}^r  \binom{r}{i}  \sum_{j=0}^k  \binom{k}{j}  \left. \dd_{E^\sharp}^{\circ k}\left[ \left(-\frac{u}{A}\right)^i\right]\right|_{w=u=0}\cdot \partial_z^{\circ(r-i)}\partial_w^{\circ(k-j)}  \Theta_{Ds,Nt}(z,w)\right|_{z=w=0} \otimes [\dd z]^{\otimes (r+k-j+1)}=		
\end{align*}	
	again observe that $\left. \dd_{E^\sharp}^{\circ k}\left[ \left(-\frac{u}{A}\right)^i\right]\right|_{w=u=0}=0$ for $i\neq j$
\begin{align*}
		=& \left. \sum_{i=0}^{\min(r,k)}  \binom{r}{i} \binom{k}{i}  \partial_z^{\circ(r-i)}\partial_w^{\circ(k-i)}  \Theta_{Ds,Nt}(z,w)\right|_{z=w=0} \left(-\frac{[\dd \bar{z}]}{A}\right)^{\otimes i} \otimes [\dd z]^{\otimes (r+k-i+1)}	
\end{align*}
	Now the result follows from the result of Bannai-Kobayashi, i.e. \cite[Theorem 1.17]{bannai_kobayashi}:
	\[
		\Theta_{s,t}(z,w)=\sum_{a,b\geq 0}\frac{\tilde{e}_{a,b+1}(s,t)}{a!b!}z^bw^a,\quad s,t\notin\Gamma
	\]
	\end{proof}
	
	\begin{proof}[Proof of \cref{EP_MainThm}] The construction of $E^{k,r+1}_{s,t}$ is obviously functorial on test object $(E/S,t)$. The explicit formula in \cref{EP_prop_Ekr} proves that $E^{k,r+1}_{s,t}$ is contained in the $k+r+1-\min(k,r)$-th filtration step of the Hodge filtration. This can also be seen without using the transcendental description as follows: By the symmetry of the situation, we may assume $k\leq r$. Keeping in mind that we agreed to denote the pullback of the connection $\nabla_\dagger\colon \Po^\dagger \rightarrow \Po^\dagger\otimes\Omega^1_{E\times E^\dagger/E^\dagger}$ again by $\nabla_\dagger$, we get the formula
	\begin{align*}
		&\left(([D]\times[N])^*\nabla_\dagger^{\circ r}\right)(q^\sharp\times q^\dagger)^*U_{s,t}^{[N],[D]}(\scan)\\
		=&(q^\sharp\times \id_{E^\dagger})^*\left[ \left(([D]\times[N])^*\nabla_\dagger^{\circ r}\right)(\id_E\times q^\dagger)^*U_{s,t}^{[N],[D]}(\scan) \right].
	\end{align*}
	Since $\Po^\dagger\otimes\Omega^1_{E\times E^\dagger/E^\dagger}=\Po^\dagger\otimes_{\Ocal_S}\omega_{E/S}=\Po^\dagger\otimes_{\Ocal_S}F^1\Hcal^1_{dR}$ we deduce that the section $\sigma:=\left(([D]\times[N])^*\nabla_\dagger^{\circ r}\right)(\id_E\times q^\dagger)^*U_{s,t}^{[N],[D]}(\scan)$ is contained in the filtration step
	 \[
	 ([D]\times[N])^*\left( \Po^\dagger \otimes_{\Ocal_S}F^{r+1}\Sym^{r+1}\Hcal^1_{dR}\right)\subseteq ([D]\times[N])^*\left( \Po^\dagger \otimes_{\Ocal_S}\Sym^{r+1}\Hcal^1_{dR}\right).
	 \]
	 We deduce
	 \[
	 E^{k,r+1}_{s,t}=(e\times e)^*(\nabla_{\sharp}^{\circ k}(q^\sharp\times\id)^*\sigma)\in \Gamma(S,F^{r+1}\Sym^{r+1}\Hcal^1_{dR})
	 \]
	 as desired.
		
	 It remains to identify the image of $E^{k,r}_{s,t}$ under the Hodge decomposition with the Eisenstein-Kronecker series for a single elliptic curve $E/\CC$ as above. By \cref{EP_prop_Ekr} the algebraic Eisenstein-Kronecker series $E^{k,r}_{s,t}$ for $E/\CC$ is given by
		\[
			E^{k,r+1}_{s,t}=\sum_{i=0}^{\min(k,r)} \binom{r}{i}\binom{k}{i} \frac{(-1)^i}{A^i}\tilde{e}_{k-i,r-i+1}(Ds,Nt) \cdot[\dd \bar{z}]^{\otimes i}\otimes [\dd z]^{\otimes k+r+1-i}.
		\]
		The Hodge decomposition is the projection to the $[\dd z]^{\otimes k+r+1}$-part which is the $\Cinfty$ modular form
		\[
			\tilde{e}_{k,r+1}(Ds,Nt)\dd z^{\otimes k+r+1}.
		\]
		Since the analytic Eisenstein--Kronecker series are finite at the cusps, we deduce the finiteness at the Tate curve from the analytic comparison.
	\end{proof}

\section{The Kronecker section and Kato-Siegel functions}
Kato--Siegel functions $\thetaD\in \Gamma(E\setminus E[D],\Ocal_E^\times)$ as introduced by Kato in \cite{kato} play an important role in modern number theory. While the values of $\thetaD$ at torsion points are closely related to elliptic units, the value of iterated derivatives of
\[
	d\log \thetaD\in\Gamma(E,\Omega^1_{E/S}(E[D]))
\]
at torsion points give classical algebraic Eisenstein series. The aim of this section is to construct the logarithmic derivatives of the Kato--Siegel functions via the Kronecker section of the Poincar\'e bundle. Let us emphasize that it is not necessary that $6$ is co-prime to $D$ for the construction of the Kronecker section. This gives a new construction of the logarithmic derivatives of the Kato--Siegel functions even if the Kato--Siegel functions are not defined. This section might be skipped on the first reading. The comparison to the Kato--Siegel functions builds on some tedious computations involving the translation operators. Some results of this section are needed to prove the distribution relation of the Kronecker section which is stated in the Appendix. 
\subsection{Kato-Siegel functions via the Poincar\'e bundle}
Let us slightly generalize the definition of the translation operators of \cref{sec_can}.

\begin{defin}
Let $\psi:E\rightarrow E'$ be an isogeny of elliptic curves over $S$. Let us write $\Po'$ for the Poincar\'e bundle of $E'$. For $t\in\ker\psi^\vee(S)$ define
\[
	\Ucal^{\psi^\vee}_t: (\psi\times T_t)^*\Po'\righteq (\psi\times\id)^*\Po'
\]
by
\[
	\Ucal^{\psi^\vee}_t:=\gamma_{\id,\psi^\vee}\circ (\id\times T_t)^*\gamma_{\psi,\id}.
\]
\end{defin}
\begin{rem}
 In the case $\psi=[D]$ this coincides with our previous definition of $\Ucal^{D}_t$.
\end{rem}
For $f\in\Gamma\left(E'\times E'^\vee, \Po'\otimes \Omega^1_{E'\times E'^\vee/E'^\vee}([E'\times e]+[e\times E'^\vee]) \right)$ set
\[
	U^{\psi^\vee}_t(f):=\Ucal^{\psi^\vee}_t\left( (\psi\times T_t)^*f \right)
\]
thus $U^{\psi^\vee}_t(f)$ is a section of
\[
	(\psi\times\id)^*\left[\Po'\otimes \Omega^1_{E'\times E'^\vee/E'^\vee}([E'\times (-t)]+[e\times E'^\vee])\right].
\]
For $e\neq t$ the identification
\begin{align}\label{ch_EP_eq4}
	&(\id\times e)^*(\psi\times\id)^*\left[\Po'\otimes \Omega^1_{E'\times E'^\vee/E'^\vee}([E'\times (-t)]+[e\times E'^\vee])\right]=\\
	\notag\cong& \psi^*\left(\Omega^1_{E'/S}\left([e] \right)\right) \notag\cong \Omega^1_{E/S}(\ker \psi)
\end{align}
allows us to view $(\id\times e)^*\left( U^{\psi^\vee}_t(f) \right)$ as a global section of $\Omega^1_{E/S}(\ker\psi)$. We will implicitly use this identification in the following. Let us assume that the degree $\deg\psi$ of the isogeny $\psi$ is invertible on $S$. For $\tilde{t}\in (\ker\psi)(S)\subseteq E(S)$ let us write
\[
	\Res_{\tilde{t}}:\Gamma(E,\Omega^1_{E/S}(\ker\psi))\rightarrow \Gamma(S,\Ocal_S)
\]
for the residue map along $\tilde{t}$. We keep our notation $\lambda:E\righteq E^\vee$ for the auto-duality associated to the ample line bundle of the zero section.
\begin{prop}\label{ch_EP_propomegaexists}
	Let $\psi:E\rightarrow E'$ be an isogeny of elliptic curves over $S$. Let us assume that $\deg\psi$ is not a zero-divisor on $S$. For a non-zero section $t\in \ker \psi^\vee(S)\subseteq E'^\vee (S)$ the section
	\[
		\omega^{\psi}_{t}:=(\id\times e)^*U^{\psi^\vee}_{t}(s_{\mathrm{can},E'})\in\Gamma(E,\Omega^1_{E/S}(\ker\psi))
	\]
	satisfies the following properties:
	\begin{enumerate}
		\item\label{ch_EP_propomegaexists_a} For each finite \'{e}tale $S$-scheme $T$ with $\left|(\ker \psi) (T) \right|=\deg\psi$ we have
		\[
			\Res_{\tilde{t}}\omega^{\psi}_{t} = \langle \tilde{t},t\rangle
		\]
		for all $\tilde{t}\in \ker\psi(T)$. Here, 
		$$
		\langle \cdot, \cdot \rangle\colon \ker\psi\times_S \ker\psi^\vee\rightarrow \mathbb{G}_m
		$$
		denotes Oda's pairing, cf.~\cite{oda}.
		\item\label{ch_EP_propomegaexists_b} The section $\omega^{\psi}_{t}\in\Gamma\left(E,\Omega^1_{E/S}(\ker\psi)\right)$ is contained in the $\Ocal_E$-submodule
		\[
			\Omega^1_{E/S}\left(\psi^*([e]-[t])\right)
		\]
		of $\Omega^1_{E/S}(\ker\psi) $. Here, we have used our chosen auto-duality isomorphism $E'\cong E'^\vee$ to view $t$ as a section of $E'$.
	\end{enumerate}
	Further, $\omega^{\psi}_{t}$ is the unique section of $\Omega^1_{E/S}(\ker\psi)$ satisfying \ref{ch_EP_propomegaexists_a} and \ref{ch_EP_propomegaexists_b}.
\end{prop}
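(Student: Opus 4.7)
The plan is to track the divisor structure and the residues of $\omega^\psi_t$ through the three operations in its definition: the pullback by $\psi \times T_t$, the gauge isomorphism $\Ucal^{\psi^\vee}_t$, and the restriction along $\id \times e$. The starting observation is the defining feature of the Kronecker section: by construction \eqref{ch_EP_eq6}, $s_{\mathrm{can}, E'}$ is the image of $\id \in \Po' \otimes \Po'^{\otimes -1}$, so it lies naturally in the smaller subsheaf $\Po' \otimes \Omega^1_{E' \times (E')^\vee/(E')^\vee}([E' \times e]+[e \times (E')^\vee]) \otimes \Ocal(-\Delta)$. Consequently it has simple poles along $\{e\} \times (E')^\vee$ and $E' \times \{e\}$, a simple zero along the antidiagonal $\Delta$ (transported to $E' \times (E')^\vee$ via $\lambda$), and, by a direct local calculation using $r_0$, its Poincar\'e residue along $\{e\} \times (E')^\vee$ is the constant section $1 \in \Ocal_{(E')^\vee}$.

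For part \ref{ch_EP_propomegaexists_b} I chase these divisors. The pullback $(\psi \times T_t)^* s_{\mathrm{can}, E'}$ has poles along $\ker\psi \times (E')^\vee$ and $E \times \{-t\}$ and a zero along $(\psi \times T_t)^{-1}\Delta$; the gauge isomorphism $\Ucal^{\psi^\vee}_t$, being an isomorphism of invertible sheaves, preserves these divisors. Restricting along $\id \times e$ kills the pole along $E \times \{-t\}$ (since $t \neq e$), retains the pole along $\psi^*[e] = \ker\psi$, and cuts out from the translated antidiagonal exactly the divisor $\psi^*[t]$, once $t$ is transported to $E'$ via $\lambda$. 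This gives the inclusion $\omega^\psi_t \in \Omega^1_{E/S}(\psi^*([e]-[t]))$.

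For part \ref{ch_EP_propomegaexists_a} I fix a finite \'etale cover $T/S$ over which $\ker\psi$ splits completely and pick $\tilde t \in \ker\psi(T)$. Since $\deg\psi$ is a non-zero-divisor, $\psi$ is \'etale, and the Poincar\'e residue of $(\psi \times T_t)^* s_{\mathrm{can}, E'}$ along $\{\tilde t\} \times (E')^\vee$ equals the constant $1$ under the pulled-back rigidification. Passing through $\Ucal^{\psi^\vee}_t = \gamma_{\id, \psi^\vee} \circ (\id \times T_t)^* \gamma_{\psi, \id}$ and restricting to $\id \times e$ converts the residue at $\tilde t$ into the unit obtained by comparing the rigidifications $r_0$ and $s_0$ along the composition of fiber isomorphisms
\[
\Po'|_{(e, t)} \xrightarrow{\gamma_{\psi, \id}|_{(\tilde t, t)}} \Po|_{(\tilde t, e)} \xrightarrow{\gamma_{\id, \psi^\vee}|_{(\tilde t, e)}} \Po'|_{(e, e)}.
\]
This is precisely the diagram defining Oda's pairing on $\ker\psi \times_S \ker\psi^\vee$ \cite{oda}, and evaluates to $\langle \tilde t, t \rangle$.

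Uniqueness is then quick: the difference of two candidate sections lies in $\Gamma(E, \Omega^1_{E/S}(-\psi^*[t])) \cong \Gamma(E, \Ocal_E(-\psi^*[t])) \otimes_{\Ocal_S} \om_{E/S}$, and the line bundle $\Ocal_E(-\psi^*[t])$ has degree $-\deg\psi<0$ on every geometric fiber, so cohomology and base change force $(\pi_E)_*\Ocal_E(-\psi^*[t]) = 0$; faithfully flat descent from $T$ back to $S$ concludes. The main obstacle, in my view, is the identification in part \ref{ch_EP_propomegaexists_a} of the explicit gauge composition with Oda's pairing: one must unwind the rigidifications $r_0$ and $s_0$ carefully and recognise that the translation by $t$ baked into $\Ucal^{\psi^\vee}_t$ is exactly the input needed to produce the classical diagram. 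Everything else reduces to divisor bookkeeping combined with the local residue value of $1$ inherited from the defining inclusion $\Ocal(-\Delta) \hookrightarrow \Ocal$.
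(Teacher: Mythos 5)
Your proposal is correct and, for part \ref{ch_EP_propomegaexists_b} and for uniqueness, coincides with the paper's argument (divisor bookkeeping from the inclusion $\Ocal(-\Delta)\hookrightarrow\Ocal$ defining the Kronecker section; difference of two candidates is residue-free, hence a global $1$-form, and vanishes along $\psi^*[t]$, hence zero). For part \ref{ch_EP_propomegaexists_a} you take a mildly different route: you compute the residue at \emph{each} $\tilde t\in\ker\psi(T)$ directly, by tracking the fiberwise comparison of rigidifications through $\gamma_{\psi,\id}$ and $\gamma_{\id,\psi^\vee}$ at the point $(\tilde t,e)$ and recognising the resulting unit as Oda's pairing. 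The paper instead proves the \emph{global} transformation law $T_{\tilde t}^*\omega^\psi_t=\langle\tilde t,t\rangle_\psi\cdot\omega^\psi_t$ — by observing that $\omega^\psi_t=(\alpha\otimes\id)((\psi\times t)^*\scan)$ where $\alpha=(\id\times t)^*\gamma_{\psi,\id}$ is precisely a trivialization of $\psi^*\Lcal$ of the kind entering the definition of Oda's pairing — and then reduces \ref{ch_EP_propomegaexists_a} to the single computation $\Res_e\omega^\psi_t=1$. The two arguments rest on the same key identification (your flagged ``main obstacle'' is exactly the paper's identification of $\alpha$ with the Oda trivialization), and both defer the base residue value $1$ to a local computation near the zero section; the paper's formulation has the added benefit that the translation formula is extracted as a standalone corollary (\cref{ch_EP_cortranslation}) and reused in the later trace compatibilities, whereas your pointwise version would have to be supplemented to recover it.
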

\begin{proof}
	For uniqueness let $\tilde{\omega}_1$ and $\tilde{\omega}_2$ both satisfy \ref{ch_EP_propomegaexists_a} and \ref{ch_EP_propomegaexists_b}. By \ref{ch_EP_propomegaexists_a} the difference satisfies:
	\[
		\tilde{\omega}_1-\tilde{\omega}_2\in \Gamma(E,\ker(\bigoplus_{\tilde{t}}\Res_{\tilde{t}}))=\Gamma(E,\Omega^1_{E/S})
	\]
	On the other hand, \ref{ch_EP_propomegaexists_b} shows that $\tilde{\omega}_1-\tilde{\omega}_2$ vanishes along the divisor $\psi^*[t]$ and we conclude $\tilde{\omega}_1-\tilde{\omega}_2=0$.\par
	Let us now prove that $\omega^{\psi}_{t}$ satisfies \ref{ch_EP_propomegaexists_b}. By its definition $s_{\mathrm{can},E'}$ is contained in the submodule 
	\[
		\Po' \otimes \Omega^1_{E'\times E'/E'}([e\times E']+[E'\times e]) \otimes \Ocal_{E'\times E'}( - \Delta)
	\]
	of $\Po' \otimes \Omega^1_{E'\times E'/E'}([e\times E']+[E'\times e])$. By the definition of the translation operator the global section $(\id\times e)^*U^{\psi^\vee}_{t}(s_{\mathrm{can},E'})$ of $\Omega^1_{E/S}(\ker\psi)$  is a global section of the $\Ocal_E$-submodule
	\[
		\Omega^1_{E/S}\left(\psi^*([e]-[t])\right).
	\]
	This proves \ref{ch_EP_propomegaexists_b}.\par 
	The residue map is compatible with base change. Combining this with the isomorphism 
	\[
		f^*\Omega^1_{E/S}(\ker\psi)\righteq \Omega^1_{E_T/T}(\ker\psi_T)
	\] 
	for $f:T\rightarrow S$ finite \'{e}tale, allow us to check \ref{ch_EP_propomegaexists_a} after finite \'{e}tale base change. Thus, we may assume that $|\ker\psi(S)|=\deg\psi$. Before we do the residue computation, let us recall the definition of Oda's pairing
	\[
		\langle\cdot,\cdot\rangle: \ker\psi\times_S\ker\psi^\vee \rightarrow \mathbb{G}_{m,S}.
	\]
	Let $t\in (\ker\psi)(S)$ and $[\Lcal]\in (\ker\psi^\vee)(S)$. Since we have assumed $[\Lcal]\in (\ker\psi^\vee)(S)$, the line bundle $\psi^*\Lcal$ is trivial and we can choose an isomorphism
	\[
		\alpha: \psi^*\Lcal\righteq  \Ocal_E.
	\]
	The chosen isomorphism $\alpha$ gives rise to a chain of isomorphisms
	\[
	\begin{tikzcd}
		\Ocal_E \ar{r}{\alpha^{-1}}[swap]{\sim} & \psi^*\Lcal=T_t^*\psi^*\Lcal \ar{r}{T_t^*\alpha}[swap]{\sim} & T_t^*\Ocal_E=\Ocal_E
	\end{tikzcd}
	\]
	and $\langle t,[\Lcal] \rangle_\psi$ is defined as the image of $1$ under this isomorphism. Our first aim is to prove
	\begin{equation*}
		T_{\tilde{t}}^*\omega^{\psi}_{t}=\langle \tilde{t},t \rangle_\psi\cdot \omega^{\psi}_{t}
	\end{equation*}
	for every $\tilde{t}\in \ker\psi(S)$. The section $t\in E'^\vee(S)$ corresponds to the isomorphism class $[(\id\times t)^* \Po']$ of line bundles. We apply Oda's pairing to $\tilde{t}$ and $[\Lcal]$ with $\Lcal:=(\id\times t)^* \Po'$. We have the following canonical choice for $\alpha$:
	\[
	\begin{tikzcd}[column sep=huge]
		\alpha: {\psi}^* \Lcal =({\psi}\times t)^* \Po' \ar[r,"(\id\times t)^*\gamma_{\psi,\id}"] & (\id\times t)^*(\id\times {\psi^\vee})^* \Po=(\id\times e)^*\Po\cong \Ocal_E.
	\end{tikzcd}
	\]
	Note that
	\begin{align}\label{ch_EP_eq7}
		\omega^{\psi}_{t}&:=(\id\times e)^* U^{\psi^\vee}_{t}(\scan)\stackrel{\mathrm{def}}{=}\notag\\
		&=(\id\times e)^*\left(\left[ \left(\gamma_{\id,\psi^\vee}\circ (\id\times T_{t})^*\gamma_{\psi,\id}\right)\otimes \id_{\Omega^1} \right]\left( (\psi\times T_{t})^*(\scan) \right)\right)  =\\
		&=\left((\id\times t)^*\gamma_{\psi,\id} \otimes\id_{\Omega^1}\right)((\psi\times t)^*\scan)=\left( \alpha \otimes \id_{\Omega^1}\right)((\psi\times t)^*\scan).\notag
	\end{align}
	After tensoring
	\[
	\begin{tikzcd}
		\Ocal_E \ar[rr,"\cdot \langle \tilde{t}{,}t \rangle_\psi",bend right]\ar[r,"\alpha^{-1}"] & \psi^*\Lcal=T_{\tilde{t}}^*\psi^*\Lcal \ar[r,"T_{\tilde{t}}^*\alpha"] & \Ocal_E
	\end{tikzcd}
	\]
	with $\otimes_{\Ocal_E}\Omega^1_{E/S}(\ker\psi)$, we obtain
	\[
	\begin{tikzcd}
		\Omega^1_{E/S}(\ker\psi) \ar[rr,"\cdot \langle \tilde{t}{,}t \rangle",bend right] & \psi^*\Lcal\otimes\Omega^1_{E/S}(\ker\psi)=T_{\tilde{t}}^*\psi^*\Lcal\otimes\Omega^1_{E/S}(\ker\psi)\ar[l,"\alpha\otimes\id_\Omega",swap] \ar[r,"T_{\tilde{t}}^*{\alpha\otimes\id_\Omega}"] & \Omega^1_{E/S}(\ker\psi)
	\end{tikzcd}
	\]
	This diagram together with \eqref{ch_EP_eq7} proves
	\begin{align*}
		T_{\tilde{t}}^* \omega^{\psi}_{t}&= T_{\tilde{t}}^*\Big[\left( \alpha \otimes \id_{\Omega^1}\right)((\psi\times t)^*\scan)\Big]= \left( T_{\tilde{t}}^*\alpha \otimes \id_{\Omega^1}\right)\left((\psi\times t)^*\scan \right)=\\
		&=\langle \tilde{t}{,}t\rangle\cdot\left( \alpha \otimes \id_{\Omega^1}\right)((\psi\times t)^*\scan)=\langle \tilde{t}{,}t \rangle\cdot\omega^{\psi}_t
	\end{align*}
	as desired.\par
	The equation
	\begin{equation*}
		T_{\tilde{t}}^*\omega^{\psi}_t=\langle \tilde{t},t \rangle_\psi \cdot \omega^{\psi}_t
	\end{equation*}	
	reduces the proof of \ref{ch_EP_propomegaexists_a} to the claim
	\[
		\Res_e \omega^{\psi}_t=1
	\]
	which can be checked by an explicit and straightforward computation locally in a neighbourhood of the zero section.
\end{proof}

	The following result was obtained during the proof of the above proposition.
	\begin{cor}\label{ch_EP_cortranslation} Let $\psi\colon E\rightarrow E'$ be an isogeny of elliptic curves and assume that $\deg \psi$ is not a zero-divisor on $S$. Then we have the following equality for all $\tilde{t}\in \ker\psi(S)$ and $t\in \ker\psi^\vee(S)$:
	\[
		T_{\tilde{t}}^*\omega^{\psi}_t=\langle \tilde{t},t \rangle_\psi \cdot \omega^{\psi}_t
	\]
	\end{cor}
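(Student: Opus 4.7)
The proof is essentially bookkeeping that was already carried out as the main intermediate step inside the proof of Proposition~\ref{ch_EP_propomegaexists}; here I would simply isolate that computation.

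The plan is to unwind $\omega_t^\psi$ using the definitions of $U_t^{\psi^\vee}$ and $\Ucal_t^{\psi^\vee}=\gamma_{\id,\psi^\vee}\circ (\id\times T_t)^*\gamma_{\psi,\id}$. Pulling along $\id\times e$ and using $T_t\circ e = t$ as well as $\psi^\vee\circ t = e$ (since $t\in\ker\psi^\vee(S)$), the factor $(\id\times e)^*\gamma_{\id,\psi^\vee}$ becomes the rigidification identification $(\id\times e)^*\Po\cong \Ocal_E$, and one obtains the clean formula
\[
\omega_t^\psi = (\alpha\otimes \id_{\Omega^1})\bigl((\psi\times t)^*\scan\bigr),
\qquad \alpha := (\id\times t)^*\gamma_{\psi,\id}\colon \psi^*\Lcal \righteq \Ocal_E,
\]
where $\Lcal := (\id\times t)^*\Po'$ is the line bundle on $E'$ classified by $t\in \ker\psi^\vee(S)\subseteq E'^\vee(S)$.

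Next I would apply $T_{\tilde t}^*$ to both factors. Because $\tilde t\in\ker\psi(S)$, the composition $\psi\circ T_{\tilde t}=\psi$, so $(\psi\times t)\circ(T_{\tilde t}\times \id)=(\psi\times t)$, and therefore $(\psi\times t)^*\scan$ is invariant under $T_{\tilde t}^*$ (more precisely, its pullback is canonically identified with itself via $T_{\tilde t}^*\psi^*\Lcal = \psi^*\Lcal$). All the change is absorbed into $\alpha$: we obtain
\[
T_{\tilde t}^*\omega_t^\psi = \bigl((T_{\tilde t}^*\alpha)\otimes \id_{\Omega^1}\bigr)\bigl((\psi\times t)^*\scan\bigr).
\]
Finally I invoke the definition of Oda's pairing: for $\tilde t\in\ker\psi$ and $[\Lcal]\in\ker\psi^\vee$, the scalar $\langle\tilde t,t\rangle_\psi\in\Ocal_S^\times$ is by definition the image of $1$ under $T_{\tilde t}^*\alpha\circ \alpha^{-1}\colon \Ocal_E\to\Ocal_E$, i.e.\ $T_{\tilde t}^*\alpha = \langle\tilde t,t\rangle_\psi\cdot \alpha$ under the identification $T_{\tilde t}^*\psi^*\Lcal=\psi^*\Lcal$. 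Substituting yields $T_{\tilde t}^*\omega_t^\psi = \langle\tilde t,t\rangle_\psi\cdot\omega_t^\psi$, as required.

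The only genuine subtlety (and the part that must be written out carefully rather than asserted) is matching the canonical identifications: that $\psi\circ T_{\tilde t}=\psi$ induces the identification $T_{\tilde t}^*\psi^*\Lcal=\psi^*\Lcal$ used in the definition of Oda's pairing, and that $(\id\times e)^*\gamma_{\id,\psi^\vee}$ really does collapse to the chosen rigidification after the pullback by $t$. Once these compatibilities are pinned down, the chain of equalities above is formal, and no further residue or local computation is needed since the statement is purely about the $\Ocal_E$-module structure and translation behaviour of the section.
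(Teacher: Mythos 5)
Your proposal is correct and is essentially the paper's own argument: the corollary is extracted verbatim from the middle of the proof of \cref{ch_EP_propomegaexists}, where the identity $\omega^{\psi}_{t}=(\alpha\otimes\id_{\Omega^1})((\psi\times t)^*\scan)$ with $\alpha=(\id\times t)^*\gamma_{\psi,\id}$ is established and then combined with the definition of Oda's pairing exactly as you describe. The compatibilities of canonical identifications you flag are indeed the only points requiring care, and the paper handles them in the same way.
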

	The most important case is the case $\psi=[D]$ in this case we have produced for each $t\in\Ed[D](S)$ sections
	\[
		\omega^{[D]}_t\in\Gamma(E,\Omega^1_{E/S}(E[D]))
	\]
	via the Poincar\'e bundle. Before we can relate $\omega^{[D]}_t$ to logarithmic derivatives of Kato-Siegel functions, let us study certain compatibility relations among the $\omega^\psi_t$:
	\begin{lem}\label{ch_EP_lemTrace}
		Let $\psi:E\rightarrow E'$ and $\varphi:E'\rightarrow E''$ be isogenies of elliptic curves over $S$. Let us further assume that $\deg \varphi\circ\psi$ is not a zero-divisor on $S$. 
		\begin{enumerate}
			\item\label{ch_EP_lemTrace_a} For $e\neq s\in\ker\varphi^\vee$:
			\[
				\omega_{s}^{\varphi\circ\psi}=\psi^*\omega_s^\varphi
			\]
			\item\label{ch_EP_lemTrace_b}  Assume that $|\ker \varphi^\vee(S)|=\deg\varphi^\vee$. For $t\in\ker(\varphi\circ\psi)^\vee(S)$ with $\varphi^\vee(t)\neq e$ we have
			\[
				\sum_{s\in\ker\varphi^\vee(S)}\omega_{t+s}^{\varphi\circ\psi}=\deg\varphi \cdot \omega_{\varphi^\vee(t)}^\psi
			\]			
		\end{enumerate}
	\end{lem}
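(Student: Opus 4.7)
The plan is to apply the uniqueness statement of Proposition \ref{ch_EP_propomegaexists}: each side of the claimed identity lies a priori in $\Gamma(E,\Omega^1_{E/S}(\ker(\varphi\circ\psi)))$, so it suffices to verify that residues agree at every point of $\ker(\varphi\circ\psi)$, and, for (b), to use the translation law of Corollary \ref{ch_EP_cortranslation} to kill the remaining holomorphic term. Setting $\chi:=\varphi\circ\psi$, the two key inputs are the standard functoriality of Oda's pairing under this factorisation, namely
(C1) $\langle \tilde{u},s\rangle_\chi=\langle \psi(\tilde{u}),s\rangle_\varphi$ for $\tilde{u}\in\ker\chi$ and $s\in\ker\varphi^\vee\subseteq\ker\chi^\vee$, and
(C2) $\langle \tilde{u},u\rangle_\chi=\langle \tilde{u},\varphi^\vee(u)\rangle_\psi$ for $\tilde{u}\in\ker\psi\subseteq\ker\chi$ and $u\in\ker\chi^\vee$;
together with perfectness of the individual Oda pairings, which is available after a harmless finite étale base change exactly as in Proposition \ref{ch_EP_propomegaexists}.

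For (a), the pullback $\psi^*\omega^\varphi_s$ is a section of $\Omega^1_{E/S}(\chi^*([e]-[s]))$ by functoriality of the divisor, so condition (b) of Proposition \ref{ch_EP_propomegaexists} is automatic for $\omega^\chi_s$. For $\tilde{u}\in\ker\chi$, compatibility of residues with étale pullback and (C1) give $\Res_{\tilde{u}}\psi^*\omega^\varphi_s=\Res_{\psi(\tilde{u})}\omega^\varphi_s=\langle \psi(\tilde{u}),s\rangle_\varphi=\langle \tilde{u},s\rangle_\chi$, which is condition (a). Uniqueness forces $\omega^\chi_s=\psi^*\omega^\varphi_s$.

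For (b), put $\sigma:=\sum_{s\in\ker\varphi^\vee(S)}\omega^\chi_{t+s}$. For $\tilde{u}\in\ker\chi$ the residue formula of Proposition \ref{ch_EP_propomegaexists}(a) and (C1) yield
\[
\Res_{\tilde{u}}\sigma=\langle \tilde{u},t\rangle_\chi\sum_{s\in\ker\varphi^\vee(S)}\langle \psi(\tilde{u}),s\rangle_\varphi.
\]
If $\tilde{u}\notin\ker\psi$, then $\psi(\tilde{u})$ defines a non-trivial character of $\ker\varphi^\vee$ and the inner sum vanishes by perfectness of $\ker\varphi\times\ker\varphi^\vee\to \mathbb{G}_m$. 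If $\tilde{u}\in\ker\psi$ the inner sum equals $\deg\varphi$, and (C2) converts $\langle \tilde{u},t\rangle_\chi$ to $\langle \tilde{u},\varphi^\vee(t)\rangle_\psi$, giving $\Res_{\tilde{u}}\sigma=\deg\varphi\cdot\Res_{\tilde{u}}\omega^\psi_{\varphi^\vee(t)}$. Hence
\[
\eta:=\sigma-\deg\varphi\cdot\omega^\psi_{\varphi^\vee(t)}\in\Gamma(E,\Omega^1_{E/S})
\]
is a translation-invariant regular $1$-form.

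To finish, I apply Corollary \ref{ch_EP_cortranslation}. For $\tilde{u}\in\ker\psi$, $T^*_{\tilde{u}}\omega^\chi_{t+s}=\langle \tilde{u},t+s\rangle_\chi\omega^\chi_{t+s}$; since $\psi(\tilde{u})=e$, (C1) collapses $\langle \tilde{u},s\rangle_\chi$ to $1$, so the scaling reduces to $\langle \tilde{u},t\rangle_\chi$. Summing and using (C2) gives $T^*_{\tilde{u}}\sigma=\langle \tilde{u},\varphi^\vee(t)\rangle_\psi\cdot\sigma$, and the same eigenvalue arises for $\omega^\psi_{\varphi^\vee(t)}$ directly from Corollary \ref{ch_EP_cortranslation}, so $T^*_{\tilde{u}}\eta=\langle \tilde{u},\varphi^\vee(t)\rangle_\psi\cdot\eta$. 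Combined with translation-invariance, $(\langle \tilde{u},\varphi^\vee(t)\rangle_\psi-1)\eta=0$ for every $\tilde{u}\in\ker\psi$. The hypothesis $\varphi^\vee(t)\neq e$ in $\ker\psi^\vee$ and perfectness of $\ker\psi\times\ker\psi^\vee\to\mathbb{G}_m$ produce some $\tilde{u}$ with $\langle \tilde{u},\varphi^\vee(t)\rangle_\psi\neq 1$, forcing $\eta=0$. The main obstacle is precisely this final step: a term-by-term check of vanishing along $\psi^*[\varphi^\vee(t)]$ fails because the individual zero divisors of the $\omega^\chi_{t+s}$ depend on $s$, and one really needs Corollary \ref{ch_EP_cortranslation} together with the hypothesis $\varphi^\vee(t)\neq e$ to exhibit the non-trivial character that eliminates the invariant residue.
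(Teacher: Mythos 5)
Your overall strategy coincides with the paper's: compute residues via the two compatibilities of Oda's pairing with the factorisation $\chi=\varphi\circ\psi$ to show that the difference of the two sides is a global (hence invariant) $1$-form, and then use the translation law of \cref{ch_EP_cortranslation} to kill it. For part (a) you invoke the uniqueness clause of \cref{ch_EP_propomegaexists} by checking both of its conditions for $\psi^*\omega^\varphi_s$; the paper instead repeats the residue-plus-translation argument, but your route is equally valid, and you are right that this shortcut is unavailable for (b) because the zero divisors of the individual $\omega^{\chi}_{t+s}$ move with $s$.

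The one genuine gap is the final step of (b). From $T^*_{\tilde u}\eta=\langle\tilde u,\varphi^\vee(t)\rangle_\psi\cdot\eta$ and $T^*_{\tilde u}\eta=\eta$ you obtain $\bigl(\langle\tilde u,\varphi^\vee(t)\rangle_\psi-1\bigr)\eta=0$, and you conclude $\eta=0$ from the existence of a single $\tilde u$ with $\langle\tilde u,\varphi^\vee(t)\rangle_\psi\neq 1$. Over the general base $S$ of the lemma this cancellation is not licensed: $\lambda:=\langle\tilde u,\varphi^\vee(t)\rangle_\psi$ is a root of unity with $\lambda^n=1$ for $n$ the order of $\tilde u$, so $(\lambda-1)(\lambda^{n-1}+\dots+1)=0$ always holds, and $\lambda\neq 1$ does not make $\lambda-1$ a non-zero-divisor (for instance $\lambda=(1,-1)$ in $\ZZ[\tfrac12]\times\ZZ[\tfrac12]$ has $\lambda-1=(0,-2)$ a zero-divisor). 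The paper avoids exactly this by summing the eigenvalue identity over all $\tilde u\in\ker\psi(S)$: this gives $\deg\psi\cdot\eta=\bigl(\sum_{\tilde u}\langle\tilde u,\varphi^\vee(t)\rangle_\psi\bigr)\cdot\eta=0$, the character sum vanishing because $\varphi^\vee(t)\neq e$, after which the standing hypothesis that $\deg\psi$ (a factor of $\deg\varphi\circ\psi$) is a non-zero-divisor finishes the argument. Your proof can be repaired either by this averaging, or by working Zariski-locally on $S$ and choosing $\tilde u$ so that $\langle\tilde u,\varphi^\vee(t)\rangle_\psi-1$ is a \emph{unit} (which the disjointness of $\varphi^\vee(t)$ from $e$ does furnish locally); but as written the step ``forcing $\eta=0$'' does not follow.
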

	\begin{proof}\ref{ch_EP_lemTrace_b}: Both sides of the claimed equality are elements in
	\[
		\Gamma(E,\Omega^1_{E/S}(\ker\varphi\circ\psi)).
	\]	
	Since $\deg \phi\circ\psi$ is not a zero-divisor on $S$, we may check the equality after inverting $\deg \phi\circ\psi$. We may further assume that there is a finite \'{e}tale map $f:T\rightarrow S$ s.t. $|\ker\varphi\circ\psi(T)|=\deg \varphi\circ\psi$. The canonical map
		\[
			f^*\Omega^1_{E/S}(\ker\varphi\circ\psi)\rightarrow \Omega^1_{E_T/T}(\ker\varphi_T\circ\psi_T)
		\]
		is an isomorphism. Since all constructions are compatible with base change, we may assume during the proof that $|\ker\varphi\circ\psi(T)|=\deg \phi\circ\psi$. In a first step we show that the difference of both sides has no residue, i.\,e.
		\[
			\omega_0:=\left( \sum_{s\in\ker\varphi^\vee(S)}\omega_{t+s}^{\varphi\circ\psi}-\deg\varphi \cdot \omega_{\varphi^\vee(t)}^\psi \right)\in \Gamma\Big(E,\ker\bigoplus_{\tilde{t}\in\ker \varphi\circ\psi (S) }\Res_{\tilde{t}}\Big).
		\]
	For $\tilde{t}\in \ker\varphi\circ \psi(S)$ we compute
	\begin{align*}
		\Res_{\tilde{t}}\sum_{s\in\ker\varphi^\vee(S)}\omega_{t+s}^{\varphi\circ\psi}&=\sum_{s\in\ker\varphi^\vee(S)} \langle \tilde{t},t+s\rangle_{\varphi\circ\psi}=\\
		&=\langle\tilde{t}, t\rangle_{\varphi\circ\psi}\cdot\sum_{s\in\ker\varphi^\vee(S)} \langle \psi(\tilde{t}),s\rangle_{\varphi} =\\
		&=\begin{cases} \deg\varphi \cdot \langle\tilde{t},\varphi^\vee(t)\rangle_{\psi} & \tilde{t}\in\ker \psi \\ 0 & \tilde{t}\notin\ker\psi. \end{cases}
	\end{align*}
	But this coincides with the residue of $\deg\varphi\cdot \omega_{\varphi^\vee(t)}^\psi$:
	\[
		\deg\varphi \Res_{\tilde{t}} \omega_{\varphi^\vee(t)}^\psi= \begin{cases} \deg\varphi\cdot \langle\tilde{t},\varphi^\vee(t)\rangle_{\psi} & \tilde{t}\in\ker \psi \\ 0 & \tilde{t}\notin\ker\psi. \end{cases}
	\]
	This shows $\omega_0\in\Gamma(E,\Omega^1_{E/S})$. In particular, $\omega_0$ is translation-invariant. On the other hand, we can use the behaviour of $\omega^{D}_{t}$ under translation (cf.~\cref{ch_EP_cortranslation}) to compute:
	\begin{align*}
		\deg \psi \cdot \omega_0&=\sum_{\tilde{s}\in \ker\psi }T^*_{\tilde{s}}\omega_0= \sum_{\tilde{s}\in \ker\psi }\left( \sum_{s\in\ker \varphi^\vee} T_{\tilde{s}}^* \omega_{t+s}^{\varphi\circ\psi}-\deg \varphi\cdot T_{\tilde{s}}^*\omega_{\varphi^\vee(t)}^\psi \right)=\\
		&=\sum_{\tilde{s}\in \ker\psi }\left( \sum_{s\in\ker \varphi^\vee} \langle \tilde{s},t+s\rangle_{\varphi\circ\psi}\cdot \omega_{t+s}^{\varphi\circ\psi}-\deg \varphi\cdot \langle \tilde{s},\varphi^\vee(t)\rangle_{\psi}\cdot \omega_{\varphi^\vee(t)}^\psi \right)=\\
		&=\underbrace{\left(\sum_{\tilde{s}\in \ker\psi } \langle \tilde{s},\varphi^\vee(t)\rangle_{\psi} \right)}_{=0}\cdot\left( \sum_{s\in\ker \varphi^\vee}  \omega_{t+s}^{\varphi\circ\psi}-\deg \varphi\cdot \omega_{\varphi^\vee(t)}^\psi \right)=0
	\end{align*}
	Since $\deg\psi$ is not a zero-divisor on $S$, we conclude $\omega_0=0$ as desired.\par
	\ref{ch_EP_lemTrace_a} can be proven along the same lines. 
	\end{proof}
	
	\begin{lem}\label{ch_EP_lemTrace2}
		Let
		\[
			\begin{tikzcd}
				E\ar[r,"\varphi"]\ar[d,"\psi"] & E' \ar[d,"\psi'"] \\
				E'\ar[r,"\varphi'"] & E
			\end{tikzcd}
		\]
		be a commutative diagram of isogenies of elliptic curves over $S$. Let us further assume that $\deg\varphi'\circ\psi$ is not a zero-divisor on $S$. Let $t\in\ker(\varphi')^\vee(S)$ with $(\psi')^\vee(t)\neq e$.
		\begin{enumerate}
			\item\label{ch_EP_lemTrace2_a} For $s\in(\ker\psi'^\vee)(S)$ we have
			\[
				\omega^{\varphi'\circ\psi}_{s+t}=\sum_{\tilde{s}\in \ker\psi(S)} \langle  \tilde{s},\varphi'^\vee (s)  \rangle_\psi \cdot (T_{-\tilde{s}})^*\omega^{\varphi}_{\psi'^\vee(t)}
			\]
			\item\label{ch_EP_lemTrace2_b}  We have
			\[
				\sum_{\tilde{s}\in \ker\psi(S)} (T_{\tilde{s}})^*\omega^{\varphi}_{\psi'^\vee(t)}=\psi^* \omega^{\varphi'}_{t}.
			\]		
		
		\end{enumerate}
	\end{lem}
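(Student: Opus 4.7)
\emph{Reducing (b) to (a).} Specializing (a) to $s=0\in\ker\psi'^\vee(S)$, the characters $\langle\tilde s,\varphi'^\vee(0)\rangle_\psi$ are all trivial, and the reindexing $\tilde s\mapsto -\tilde s$ (a bijection on $\ker\psi(S)$) yields
\[
\omega^{\varphi'\circ\psi}_{t}\;=\;\sum_{\tilde s\in\ker\psi(S)} T_{\tilde s}^{*}\,\omega^{\varphi}_{\psi'^\vee(t)}.
\]
Combining this with \cref{ch_EP_lemTrace}\,\ref{ch_EP_lemTrace_a}, applied to the chain $E\xrightarrow{\psi} E'\xrightarrow{\varphi'} E$ at $t\in\ker\varphi'^\vee(S)$, which gives $\omega^{\varphi'\circ\psi}_{t}=\psi^{*}\omega^{\varphi'}_{t}$, one obtains (b). Thus it suffices to establish (a).

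\emph{Proof plan for (a).} Write
\[
\eta_1:=\omega^{\varphi'\circ\psi}_{s+t},\qquad \eta_2:=\sum_{\tilde s\in\ker\psi(S)}\langle\tilde s,\varphi'^\vee(s)\rangle_\psi\cdot T_{-\tilde s}^{*}\,\omega^{\varphi}_{\psi'^\vee(t)}.
\]
Both are sections of $\Omega^1_{E/S}(\ker(\varphi'\circ\psi))$. My plan is to apply the uniqueness criterion of \cref{ch_EP_propomegaexists}: check that $\eta_2$ satisfies the pole bound $(\varphi'\circ\psi)^{*}([e]-[s+t])$ and that both sides have the same residue at every $P\in\ker(\varphi'\circ\psi)(T)$ (after a finite étale base change $T\to S$). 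As a first sanity check, using \cref{ch_EP_cortranslation} and the functoriality of Oda's pairing along $\ker\psi\hookrightarrow\ker(\varphi'\circ\psi)$, one has for $P\in\ker\psi$ that $\langle P,s+t\rangle_{\varphi'\circ\psi}=\langle P,\varphi'^\vee(s+t)\rangle_\psi=\langle P,\varphi'^\vee(s)\rangle_\psi$ (since $\varphi'^\vee(t)=0$), so $T_{P}^{*}\eta_1=\langle P,\varphi'^\vee(s)\rangle_\psi\cdot\eta_1$; a direct reindexing of the sum shows $\eta_2$ transforms by the same character, which is the reason for the twisting coefficient in the definition of $\eta_2$.

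\emph{Residue matching.} For $P\in\ker(\varphi'\circ\psi)$, the residue of $\eta_1$ at $P$ equals $\langle P,s+t\rangle_{\varphi'\circ\psi}$, which by the two factorizations $\varphi'\circ\psi=\psi'\circ\varphi$ decomposes as $\langle\varphi(P),s\rangle_{\psi'}\cdot\langle\psi(P),t\rangle_{\varphi'}$. On the other hand, the residue of $\eta_2$ at $P$ collects the contributions from those $\tilde s\in\ker\psi(S)$ with $P-\tilde s\in\ker\varphi(S)$, each contributing $\langle\tilde s,\varphi'^\vee(s)\rangle_\psi\cdot\langle P-\tilde s,\psi'^\vee(t)\rangle_\varphi$. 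The heart of the proof is to check that the resulting sum collapses to $\langle\varphi(P),s\rangle_{\psi'}\langle\psi(P),t\rangle_{\varphi'}$, a Fourier-type identity in the two Oda pairings governed by the commutative square. Once this residue identity is verified, the uniqueness in \cref{ch_EP_propomegaexists} forces $\eta_1=\eta_2$ (equivalently, $\eta_1-\eta_2$ is a global differential on $E$ that is an eigenvector for a non-trivial character of $\ker\psi$, hence zero).

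\emph{Main obstacle.} The technical core is the residue identity: it rests on the compatibility of Oda's pairing under the factorizations $\varphi'\circ\psi=\psi'\circ\varphi$, and in particular on identifying the precise indexing set of non-vanishing $\tilde s$ for each $P$ via the commutation relation. This is the step where the square structure of the diagram (rather than a linear chain as in \cref{ch_EP_lemTrace}) really enters, and where careful bookkeeping is essential.
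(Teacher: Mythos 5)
Your reduction of (b) to the case $s=e$ of (a) is correct, and your residue computation for (a) — matching $\Res_P$ of both sides with $\langle\varphi(P),s\rangle_{\psi'}\cdot\langle\psi(P),t\rangle_{\varphi'}=\langle P,s+t\rangle_{\varphi'\circ\psi}$ via the compatibility of Oda's pairing with the two factorizations of $\varphi'\circ\psi$ — is exactly the computation in the paper. The gap is in how you finish. Your parenthetical fallback, that $\eta_1-\eta_2$ is a global differential which is an eigenvector for a \emph{non-trivial} character of $\ker\psi$, is wrong: the character you computed is $P\mapsto\langle P,\varphi'^\vee(s)\rangle_\psi$, which is trivial whenever $\varphi'^\vee(s)=e$ — in particular for $s=e$, which is precisely the case you need to deduce (b). The hypothesis that guarantees a non-trivial character is $(\psi')^\vee(t)\neq e$, and the corresponding character lives on $\ker\varphi$, not on $\ker\psi$. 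The paper's argument is: once the residues match, $\omega_0:=\eta_1-\eta_2$ is a global (hence translation-invariant) differential, so
\[
\deg\varphi\cdot\omega_0=\sum_{\tilde{u}\in\ker\varphi(S)}T_{\tilde{u}}^*\omega_0=\Bigl(\sum_{\tilde{u}\in\ker\varphi(S)}\langle\tilde{u},\psi'^\vee(t)\rangle_\varphi\Bigr)\cdot\omega_0=0,
\]
the character sum vanishing because $\psi'^\vee(t)\neq e$; since $\deg\varphi$ is a non-zero-divisor, $\omega_0=0$.

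Your primary route — invoking the full uniqueness statement of \cref{ch_EP_propomegaexists} — would also work in principle, but it requires verifying that $\eta_2$ satisfies the pole-bound condition, i.e. that the sum $\sum_{\tilde{s}}\langle\tilde{s},\varphi'^\vee(s)\rangle_\psi\,T_{-\tilde{s}}^*\omega^{\varphi}_{\psi'^\vee(t)}$ lies in $\Omega^1_{E/S}\bigl((\varphi'\circ\psi)^*([e]-[s+t])\bigr)$. Each summand vanishes only along $\tilde{s}+\varphi^{-1}(\psi'^\vee(t))$, so the required vanishing of the sum along $(\varphi'\circ\psi)^{-1}(s+t)$ does not hold term by term and would need a separate argument, which you do not supply. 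You should replace the concluding step by the character-averaging over $\ker\varphi$ as above; with that change the proof is complete and coincides with the paper's.
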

	\begin{proof}\ref{ch_EP_lemTrace2_a}: Both sides of the claimed equality are elements in
	\[
		\Gamma(E,\Omega^1_{E/S}(\ker\varphi'\circ\psi)).
	\]	
	After inverting $\deg \phi'\circ\psi$ there is a finite \'{e}tale map $f:T\rightarrow S$ s.t. $|\ker\varphi'\circ\psi(T)|=\deg \varphi'\circ\psi$. The canonical map
		\[
			f^*\Omega^1_{E/S}(\ker\varphi'\circ\psi)\rightarrow \Omega^1_{E_T/T}(\ker\varphi'_T\circ\psi_T)
		\]
		is an isomorphism and since all constructions are compatible with base change, we may assume during the proof that $|\ker\varphi'\circ\psi(T)|=\deg \phi\circ\psi$. In a first step we show that the difference of both sides has no residue, i.\,e.
		\[
			\omega_0:=\left(\omega^{\varphi'\circ\psi}_{s+t}-\sum_{\tilde{s}\in \ker\psi(S)} \langle  \tilde{s},\varphi'^\vee (s)  \rangle_\psi \cdot (T_{-\tilde{s}})^*\omega^{\varphi}_{\psi'^\vee(t)} \right)\in \Gamma\Big(E,\Omega^1_{E/S}\Big).
		\]
	For $\tilde{t}\in (\ker\varphi'\circ \psi)(S)$ we compute
	\begin{align*}
		& \Res_{\tilde{t}}\sum_{\tilde{s}\in \ker\psi(S)} \langle  \tilde{s},\varphi'^\vee (s)  \rangle_\psi \cdot (T_{-\tilde{s}})^*\omega^{\varphi}_{\psi'^\vee(t)}\\
		=& \begin{cases} \langle  \tilde{s},\varphi'^\vee (s)  \rangle_\psi \cdot \langle  \tilde{t}-\tilde{s},\psi'^\vee (t)  \rangle_\varphi & \tilde{t}-\tilde{s}\in\ker\varphi \\ 0 &  \tilde{t}-\tilde{s}\notin\ker\varphi  \end{cases}\\
		=& \begin{cases} \langle  \varphi(\tilde{s}),s  \rangle_{\psi'} \cdot \langle  \psi(\tilde{t}),t  \rangle_{\varphi'} & \varphi(\tilde{t})=\varphi(\tilde{s}) \\ 0 & \varphi(\tilde{t})\neq\varphi(\tilde{s})\end{cases}\\
		=& \langle  \varphi(\tilde{t}),s  \rangle_{\psi'} \cdot \langle  \psi(\tilde{t}),t  \rangle_{\varphi'}=\langle  \tilde{t},s+t  \rangle_{\varphi'\circ\psi}
	\end{align*}
	But this coincides with the residue of $\omega^{\varphi'\circ\psi}_{s+t}$. This shows $\omega_0\in\Gamma(E,\Omega^1_{E/S})$. In particular, $\omega_0$ is translation-invariant.
	\begin{align*}
		\deg\varphi\cdot \omega_0&=\sum_{\tilde{t}\in\ker\varphi(S)}T_{\tilde{t}}^*\omega_0=\\
		&=\sum_{\tilde{t}\in\ker\varphi(S)}\left( T_{\tilde{t}}^*\omega_{s+t}^{\varphi'\circ\psi} -\sum_{\tilde{s}\in\ker\psi(S)} \langle \tilde{s},\varphi'^\vee(s)\rangle_{\psi}T_{-\tilde{s}}^*T_{\tilde{t}}^*\omega^\varphi_{\psi'^\vee(t)}\right)=\\
		&=\underbrace{\left(\sum_{\tilde{t}\in\ker\varphi(S)}\langle \tilde{t},\psi'^\vee(t)\rangle_{\varphi}\right)}_{=0}\cdot\omega_0
	\end{align*}
	Since $\deg\varphi$ is not a zero-divisor on $S$, we conclude $\omega_0=0$ as desired.\par
	\ref{ch_EP_lemTrace2_b}: Follows by setting $s=e$ in \ref{ch_EP_lemTrace2_a} and using \cref{ch_EP_lemTrace} \ref{ch_EP_lemTrace_b}.
	\end{proof}	

The classical Kato--Siegel functions are only defined if $D$ is co-prime to $6$, while the definition of $\omega_D$ (defined below) makes sense without this hypothesis:
	\begin{cor}\label{PE_propKatoSiegel}
	For $D>1$ invertible on $S$, let $T\rightarrow S$ be finite \'etale with $|E[D](T)|=D^2$. Let us define
	\[
			\omega^D:=\sum_{e\neq t\in \Ed_T[D](T)} \omega^{[D]}_t \in\Gamma(E,\Omega_{E/S}(E[D])).
	\]
	 If $D$ is furthermore co-prime to $6$, then $\omega^D$ coincides with the logarithmic derivative of the Kato--Siegel function $\thetaD$, i.e.
		\[
			\omega^D=\dlog \thetaD.
		\]
	\end{cor}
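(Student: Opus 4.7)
The plan is to identify $\omega^D$ with $\dlog\thetaD$ in two stages: first matching residues along $E[D]$, then showing the remaining invariant differential vanishes.

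\emph{Step 1 (residue computation).} By the definition of $\omega^D$ and \cref{ch_EP_propomegaexists}(a), for every $\tilde{t}\in E[D](T)$,
\[
\Res_{\tilde{t}}\omega^D=\sum_{e\neq t\in E^\vee_T[D](T)}\langle\tilde{t},t\rangle_{[D]}.
\]
Oda's pairing realizes $\langle\tilde{t},-\rangle_{[D]}$ as a character of the finite \'etale group scheme $E^\vee[D]$ which is trivial precisely when $\tilde{t}=e$. The standard character-sum identity then yields $\Res_{e}\omega^D=D^2-1$ and $\Res_{\tilde{t}}\omega^D=-1$ for every nonzero $\tilde{t}\in E[D]$.

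\emph{Step 2 (comparison with $\dlog\thetaD$).} Since $\mathrm{div}(\thetaD)=D^2[e]-E[D]=(D^2-1)[e]-\sum_{e\neq t\in E[D]}[t]$, the section $\dlog\thetaD\in\Gamma(E,\Omega^1_{E/S}(E[D]))$ has exactly the same residues $D^2-1$ at $e$ and $-1$ at each nonzero $D$-torsion point. The sum defining $\omega^D$ is manifestly stable under the Galois action of $T/S$ permuting the $D$-torsion points, so by descent the difference $\tau:=\omega^D-\dlog\thetaD$ lies in $\Gamma(E,\Omega^1_{E/S})\cong\Gamma(S,\om_{E/S})$, i.e.\ is an invariant differential.

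\emph{Step 3 (vanishing of $\tau$, the main obstacle).} Both $\omega^D$ and $\dlog\thetaD$ are functorial in $(E/S)$ with $\Gamma_1(D)$-level structure over $\Spec\ZZ[1/6D]$, so $\tau$ descends to a universal invariant differential on the universal elliptic curve $\Ecal/\Mcal$. Since $\om_{\Ecal/\Mcal}$ is a line bundle on the integral base $\Mcal$, it suffices to check $\tau=0$ on a dense open, which I would do by computing the $q$-expansions of both sides at the Tate curve $\Tate(q^D)$: the translate of the Kronecker section admits an explicit closed-form expansion via \cref{ch_EP_prop_Uscan_expl} in terms of the Kronecker theta function $\Theta(z,w)$, while $\dlog\thetaD$ has a well-known $q$-expansion coming from the Kato--Siegel product formula. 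Matching these expansions forces $\tau=0$ and completes the proof.
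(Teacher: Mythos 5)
Your Steps 1 and 2 are correct and reproduce one half of the paper's argument: by \cref{ch_EP_propomegaexists}\ref{ch_EP_propomegaexists_a} and orthogonality of characters, $\omega^D$ has residue $D^2\mathds{1}_e-\mathds{1}_{E[D]}$, which matches the residues of $\dlog\thetaD$, so the difference $\tau$ is an invariant differential. The gap is Step 3, which is where the actual content of the corollary sits and which you only announce rather than carry out. ``Matching these expansions forces $\tau=0$'' is precisely the assertion to be proved, and neither ingredient you cite delivers it as stated: \cref{ch_EP_prop_Uscan_expl} is a complex-analytic statement about the uniformization over $\CC$, not a $q$-expansion on the Tate curve, so to use it you would still have to identify the constant term of the translated Kronecker theta function with the logarithmic derivative of the Siegel unit (essentially redoing the Bannai--Kobayashi comparison), while a direct algebraic computation of $\omega^{[D]}_t$ on $\Tate(q^D)$ is nontrivial and absent. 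Your reduction to the universal curve also needs care: $\omega^D$ is defined without level structure, so you must either work on the moduli stack or check independence of the auxiliary rigidification, and the $q$-expansion principle requires that every geometric connected component of the moduli space meet the chosen cusp.

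The paper closes this gap purely algebraically, without any $q$-expansion or transcendental input: $\dlog\thetaD$ is characterized among sections of $\Omega^1_{E/S}(E[D])$ not by its residues alone but by its residues \emph{together with} trace compatibility, $\Tr_{[N]}\dlog\thetaD=\dlog\thetaD$ for $N$ coprime to $D$ (an invariant differential is multiplied by $N$ under $\Tr_{[N]}$, which is what kills the ambiguity $\tau$). The corresponding trace identity for $\omega^D$ is exactly \cref{ch_EP_lemTrace2}~\ref{ch_EP_lemTrace2_b} applied with $\psi=\psi'=[N]$ and $\varphi=\varphi'=[D]$, summed over $e\neq t\in\Ed[D]$ using that $t\mapsto Nt$ permutes the nonzero $D$-torsion. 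If you want to salvage your structure, the cleanest repair is to replace your Step 3 by this verification of trace compatibility; otherwise you must actually execute the Tate-curve computation you defer.
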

	\begin{proof}
		The logarithmic derivative $\dlog \thetaD\in\Gamma(E,\Omega^1_{E/S}(E[D]))$ is uniquely determined by the following two properties:
		\begin{enumerate}
			\item Its residue is
			\[			
			 \Res (\dlog \thetaD)=D^2\mathds{1}_e-\mathds{1}_{E[D]}
			\]			 
			 where
			\[
				\Res: \Omega^1_{E/S}(\log E[D])\rightarrow (i_{E[D]})_*\Ocal_{E[D]}
			\]
			is the residue map and $\mathds{1}_e$ resp. $\mathds{1}_{E[D]}$ are the functions in $(i_{E[D]})_*\Ocal_{E[D]}$ which have the constant value one along $e$ resp. $E[D]$.
			\item It is trace compatible, i.\,e. for each $N$ coprime to $D$ we have 
			\[\Tr_{[N]}\dlog \thetaD=\dlog \thetaD.\]
		\end{enumerate}
		The residue condition for $\omega^D$ follows from \cref{ch_EP_propomegaexists}. The trace compatibility follows by applying \cref{ch_EP_lemTrace2} \ref{ch_EP_lemTrace2_b} with $\psi$ and $\psi'$ equal to $[N]$ and $\varphi$ and $\varphi'$ equal to $[D]$.
	\end{proof}

\part{p-adic interpolation of real-analytic Eisenstein series}
\section[$p$-adic theta functions for sections of the Poincar\'e bundle]{p-adic theta functions for sections of the Poincar\'e bundle}
In \cite{norman_padictheta} P.~Norman discussed constructions of $p$-adic theta functions associated to sections of line bundles on Abelian varieties over algebraically closed $p$-adic fields. A similar method can be used to construct $p$-adic theta functions for sections of the Poincar\'e bundle for Abelian schemes with ordinary reduction over more general $p$-adic base schemes. We will discuss the construction of $p$-adic theta functions for elliptic curves but it immediately generalizes to the higher dimensional case.\par 

Let $p$ be a fixed prime. Let $R$ be a $p$-adic ring, i.\,e. $R$ is complete and separated in its $p$-adic topology, and set $S:=\Spec R$. An elliptic curve $E/S$ will be said to have ordinary reduction if $E\times_S \Spec R/pR$ is fiber-wise an ordinary elliptic curve. For the moment let $n\geq 1$ be a fixed integer. Let $C:=C_n$ resp.~$D:=D_n$ be the connected components of $E[p^n]$ resp.~$E^\vee[p^n]$. Let us write $i: C\hookrightarrow E$ and $j:D\hookrightarrow \Ed$ for the inclusions.  We define
	 \[
	 	\varphi:E\twoheadrightarrow E/C=:E'
	 \]
	 and note that its dual $\varphi^\vee\colon (E')^\vee\rightarrow E^\vee$ is \'{e}tale since we assumed $E/S$ to have ordinary reduction. Let us further write $j'\colon D'\hookrightarrow (E')^\vee$ for the inclusion of the connected component of $(E')^\vee[p^n]$. Since $\varphi^\vee$ is \'etale it induces an isomorphism on connected components of $p^n$-torsion groups, i.e.:
	 \[
	 	\begin{tikzcd}
	 		D'\ar[r,hook,"j'"] \ar[d,"\cong"] & (E')^\vee \ar[d,"\varphi^\vee"]\\
	 		D \ar[r,hook,"j"] & E^\vee 
	 	\end{tikzcd}
	 \]
	 Let us write $\Phi:\Ocal_D\righteq \Ocal_{D'}$ for the induced isomorphism of structure sheaves. In particular by pulling back the Poincar\'e bundle along this diagram we obtain an $\id\times \Phi$-linear isomorphism
	 \[
	 		(\id\times j)^*\Po\righteq (\id\times j')^*(\id\times\varphi^\vee)^*\Po.
	 \]
	 Let us write $\Po'$ for the Poincar\'e bundle on $E'\times_S(E'^\vee)$. Restricting along $(i\times\id)$ and composing with $(i\times\id)^*\gamma_{\id,\varphi^\vee}$ gives an $\id\times\Phi$-linear isomorphism
	\begin{equation}\label{eq_Triv1}
		(i \times j)^*\Po\righteq (i\times j')^*(\id\times\varphi^\vee)^*\Po\righteq (i\times j')^*(\varphi\times\id)^*\Po'.
	\end{equation}
	Since $\varphi\circ i$ factors through the zero section, we have the identity
	\[
		\varphi\circ i=\varphi \circ e\circ \pi_C.
	\]
	Using this, we obtain an $\id\times\Phi^{-1}$-linear isomorphism
	\begin{equation}\label{eq_Triv2}
		(i\times j')^*(\varphi\times\id_{E'^\vee})^*\Po'=(\pi_C\times \id_{D'})^*(e\times\id_{D'})^*(\varphi\times j')^*\Po\stackrel{(a)}{\cong} \pi_C^*\Ocal_{D'}\stackrel{(b)}{\cong} \Ocal_C\otimes_{\Ocal_S}\Ocal_D.
	\end{equation}
	Here, we have used the rigidification of the Poincar\'e bundle in $(a)$ and $\Phi^{-1}$ in $(b)$. Finally, the composition of \eqref{eq_Triv1} and \eqref{eq_Triv2} gives an $\Ocal_{C_n}\otimes_{\Ocal_S}\Ocal_{D_n}$-linear isomorphism
	\[
		\triv_n: \Po|_{C_n\times D_n}=(i\times j)^*\Po \righteq \Ocal_{C_n}\otimes_{\Ocal_S}\Ocal_{D_n}.
	\]
	It is straightforward to check that this isomorphism is compatible with restriction along $C_n\hookrightarrow C_m$ for $n\leq m$. Let us write $\hat{E}$ and $ \Edf$ for the formal groups obtained by completion of $E$ and $\Ed$ along the zero section. By passing to the limit over $n$ we obtain
	\begin{equation}\label{eq_Triv3}
		\triv:\Po|_{\hat{E}\times_S \Edf}\righteq \Ocal_{\hat{E}}\hat{\otimes}_{\Ocal_S}\Ocal_{\Edf}.
	\end{equation}
	Let us introduce the notation $\Pof:=\Po|_{\hat{E}\times_S \Edf}$ for the restriction of the Poincar\'e bundle to $\hat{E}\times_S \Edf$.
	\begin{defin}
		For a section $s\in\Gamma(U,\Po)$ with $U\subseteq E\times_S\Ed$ an open subset containing the zero section $e\times_S e$, let us define the \emph{$p$-adic theta function associated to $s$} by 
		\[
			\vartheta_s:=\triv(s)\in \Gamma(\hat{E}\times_S\Edf, \Ocal_{\hat{E}}\hat{\otimes}_{\Ocal_S}\Ocal_{\Edf}).
		\]
	\end{defin}

\section[$p$-adic Eisenstein-Kronecker series]{p-adic Eisenstein-Kronecker series}
In the first part of this paper we have given a construction of real-analytic Eisenstein series via the Poincar\'e bundle. More precisely we constructed geometric nearly holomorphic modular forms
\[
	E^{k,r}_{s,t} \in \Gamma(S,\Sym^{k+r}\HdR{1}{E/S})
\]
which give rise to the classical Eisenstein-Kronecker series after applying the Hodge-decomposition on the modular curve. It was first observed by Katz in \cite{katz_padicinterpol} that one can get $p$-adic modular forms associated to geometric nearly holomorphic modular forms by applying the unit root decomposition on the universal trivialized elliptic curve instead of the Hodge decomposition. Let us recall this construction. For more details we refer to Katz' paper \cite{katz_padicinterpol}.\par 
Let $R$ be a $p$-adic ring and let us write $S=\Spec R$. A trivialization of an elliptic curve $E/S$ is an isomorphism
	 \[
	 	\beta: \Ef\righteq \Gmf{S}
	 \]
	 of formal groups over $S$. For a natural number $N\geq 1$ coprime to $p$, a trivialized elliptic curve with $\Gamma(N)$-level structure is a triple $(E,\beta,\alpha_N)$ consisting of an elliptic curve $E/S$, a trivialization $\beta$ and a level structure $\alpha_N:(\ZZ/N\ZZ)^2_S\righteq E[N]$. Let $(\Etriv,\beta,\alpha_N)$ be the universal trivialized elliptic curve over the moduli scheme $\Mtriv$ of trivialized elliptic curves of level $\Gamma(N)$. The scheme $\Mtriv$ is affine. Let us write $V_p(\Gamma(N))$ or sometimes just $\VpN$ for the ring of global sections of $\Mtriv$. Following Katz, the ring $\VpN$ will be called ring of generalized $p$-adic modular forms. For more details we refer to \cite[Ch. V]{katz_padicinterpol}. Let us recall the definition of the unit root decomposition. Dividing $\Etriv$ by its canonical subgroup $C$, again gives a trivialized elliptic curve
	\[
		(E'=\Etriv/C,\beta',\alpha_N')
	\]	 
	  with $\Gamma(N)$-level structure over $\Spec \VpN$. The corresponding morphism 
	\[
		\Frob: \VpN\rightarrow \VpN
	\]	 
	 classifying this quotient will be called \emph{Frobenius morphism} of $\Mtriv=\Spec \VpN$. In particular, the quotient map $\Etriv\rightarrow \Etriv'=\Etriv/C$ induces a $\Frob$-linear map
	 \[
	 	F: \Frob^*\HdR{1}{\Etriv/\Mtriv}=\HdR{1}{\Etriv'/\Mtriv}\rightarrow \HdR{1}{\Etriv/\Mtriv}
	 \]  
	 which is easily seen to respect the Hodge filtration
	 \[
	 	\begin{tikzcd}
	 	 0 \ar[r] & \om_{\Etriv/\Mtriv}\ar[r] & \HdR{1}{\Etriv/\Mtriv}\ar[r] & \om_{\Etriv^\vee/\Mtriv}^\vee\ar[r] & 0.
	 	\end{tikzcd}
	 \]
	 Further, the induced $\Frob$-linear endomorphism of $\om_{\Etriv^\vee/\Mtriv}^\vee$ is bijective while the induced $\Frob$-linear map on $\om_{\Etriv/\Mtriv}$ is divisible by $p$. This induces a decomposition
	 \begin{equation}\label{IP_eq1}
	 	\HdR{1}{\Etriv/\Mtriv}=\om_{\Etriv/\Mtriv}\oplus \UR
	 \end{equation}
	 where $\UR\subseteq \HdR{1}{\Etriv/\Mtriv}$ is the unique $F$-invariant $\Ocal_\Mtriv$-submodule on which $F$ is invertible. $\UR$ is called the unit root space and \eqref{IP_eq1} is called unit root decomposition. Let us write
	 \[
	 	u\colon \HdR{1}{\Etriv/\Mtriv}\rightarrow \om_{\Etriv/\Mtriv}
	 \]
	 for the projection induced by the unit root decomposition. Let us further observe, that the isomorphism
	 \[
	 	\beta: \Eftriv\righteq \Gmf{S}
	 \]
	 gives a canonical generator $\omega:=\beta^{-1}(\frac{d T}{1+T})$ of $\om_{\Etriv/\Mtriv}$. This generator gives an isomorphism
	 \[
	 	\om_{\Etriv/\Mtriv}\righteq \Ocal_{\Mtriv}, \quad \omega\mapsto 1.
	 \]
	Let us consider the following variant of the Eisenstein--Kronecker series $E^{k,r+1}_{s,t}$: For a positive integer $D$ let us define
	\[
		\EisD:=\sum_{e\neq t\in \Etriv[D]} E^{k,r+1}_{s,t}
	\]	 
	One could equally well work with the Eisenstein--Kronecker series $E^{k,r+1}_{s,t}$. The main reason to concentrate on $\EisD$ is, that we want to compare the Eisenstein--Kronecker series with the real-analytic Eisenstein series studied by Katz. For this comparison it is convenient to work with the variant $\EisD$. 
	\begin{defin}
	Let $\Etriv/\Mtriv$ be the universal trivialized elliptic curve with $\Gamma(N)$-level structure. Let $D>0$, $(0,0)\neq (a,b)\in\ZZ/N\ZZ$ and $s\in \Etriv[N]$ the associated $N$-torsion section. The $p$-adic Eisenstein-Kronecker series 
	\[
	\EispD\in\VpN
	\]
	are defined as the image of $\EisD$ under the unit root decomposition
	\[
		\Sym^{k+r+1}\HdR{1}{\Etriv/\Mtriv} \twoheadrightarrow \om_{\Etriv/\Mtriv}^{k+r+1} \righteq \Ocal_{\Mtriv}.
	\]
	\end{defin}	 
Katz defines generalized $p$-adic modular forms $2\Phi_{k,r,f}\in\VpN$ for $k,r\geq 1$ and $f:(\ZZ/N\ZZ)^2\rightarrow \Zp$. For the precise definition we refer to \cite[\S 5.11]{katz_padicinterpol}. Essentially, he applies the differential operator
	\[
		 \Theta:\Sym^k \HdR{1}{\Etriv/\Mtriv}\rightarrow \Sym^k \HdR{1}{\Etriv/\Mtriv}\otimes_{\Ocal_\Mtriv} \Omega^1_{\Mtriv/\Zp} \hookrightarrow \Sym^{k+2} \HdR{1}{\Etriv/\Mtriv}
	\]
	obtained by Gau\ss--Manin connection and Kodaira--Spencer isomorphism to classical Eisenstein series and finally uses the unit root decomposition in order to obtain $p$-adic modular forms. We have the following comparison result.
	\begin{prop}\label{PI_propKatz} We have the following equality of $p$-adic modular forms:
	\[
		\EispD=2N^{-k}\left[ D^{k-r+1} \Phi_{r,k,\delta_{(a,b)}}- \Phi_{r,k,\delta_{(Da,Db)}} \right]
	\]
	where $\delta_{(a,b)}$ is the function on $(\ZZ/N\ZZ)^2$ with $\delta_{(a,b)}(a,b)=1$ and zero else.
	\end{prop}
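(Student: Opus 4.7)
The plan is to compare the two constructions at the level of algebraic sections of $\Sym^{k+r+1}_{\Ocal_\Mtriv}\HdR{1}{\Etriv/\Mtriv}$ before taking the unit-root projection. By definition, $\EispD$ is the unit-root projection of $\EisD$, and Katz's $\Phi_{r,k,f}$ is, up to the universal factor $2$, the unit-root projection of the algebraic section obtained by applying $\Theta^{r}$ to the holomorphic Eisenstein series attached to $f$. Since both underlying algebraic sections are defined integrally over $\Mtriv$ and the unit-root projection is $\Ocal_\Mtriv$-linear, it suffices to identify the two algebraic sections themselves. By the $q$-expansion principle for generalized $p$-adic modular forms, this identification can in turn be checked after base change to $\CC$ on the universal elliptic curve.

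Over $\CC$, the Hodge decomposition and Urban's isomorphism $H^0(\bar{\Mcal}_\CC, F^{k-r}\Sym^{k+r+1}\Hcal^1_{dR}) \cong \Ncal^{\min(k,r)}_{k+r+1}(\Gamma_1(N),\CC)$ let us identify algebraic sections by identifying their associated classical nearly holomorphic modular forms. Theorem \ref{EP_MainThm} together with the explicit formula of Proposition \ref{EP_prop_Ekr} yields
\[
\EisD \;\longleftrightarrow\; \sum_{e\neq t\in \Etriv[D]}\tilde{e}_{k,r+1}(Ds,Nt;\tau)\,dz^{\otimes(k+r+1)}
\]
plus strictly lower filtration terms. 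A standard distribution/trace relation for the Eisenstein--Kronecker series as $t$ ranges over $\Etriv[D]\setminus\{e\}$ rewrites this sum as $D^{k-r+1}$ times the $t_{(a,b)}$-term minus the $t_{(Da,Db)}$-term. On Katz's side, the iterated Maass--Shimura operator applied to a holomorphic weight-$(k+1)$ Eisenstein series produces precisely the Eisenstein--Kronecker series $\tilde{e}_{k,r+1}$, up to the normalization factor $2N^{-k}$ built into Katz's convention. Matching the two nearly holomorphic expressions then forces the two algebraic sections to coincide, hence their unit-root projections agree.

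The main obstacle will be the careful bookkeeping of normalizations and indices: reconciling the Bannai--Kobayashi normalization of $\tilde{e}_{k,r+1}$ used throughout this paper with Katz's normalization of the classical Eisenstein series defining $\Phi_{r,k,f}$; tracking the interchange of the roles of $k$ and $r$ between our symmetric Poincar\'e-bundle iteration $\con{\sharp}^{\circ k}\circ \con{\dagger}^{\circ r}$ and Katz's asymmetric Gauss--Manin iteration (which is exactly what produces $\Phi_{r,k,\cdot}$ with swapped indices); and extracting the precise combinatorial factor $D^{k-r+1}$ together with the correct sign from the distribution relation summed over $\Etriv[D]\setminus\{e\}$. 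The core analytic and algebraic content reduces to the classical identification of Eisenstein--Kronecker series with Maass--Shimura derivatives of holomorphic Eisenstein series, which is standard once Theorem \ref{EP_MainThm} is in hand.
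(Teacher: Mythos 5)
Your strategy is essentially the same as the paper's: reduce the equality of $p$-adic modular forms to a comparison of the underlying algebraic sections in $\Sym^{k+r+1}\HdR{1}{\Etriv/\Mtriv}$, observe these come from the universal elliptic curve of level $\Gamma(N)$, pass to $\CC$ and compare the associated $\Ccal^\infty$-modular forms through the (injective) Hodge decomposition using \cref{EP_MainThm}, carry out the character-orthogonality sum over $\Etriv[D]\setminus\{e\}$ to obtain $D^{k-r+1}\tilde{e}_{k,r+1}(s,0)-\tilde{e}_{k,r+1}(Ds,0)$, and match against Katz's series. The one place your proposal is genuinely incomplete is exactly what you call \emph{bookkeeping}: the paper's proof is entirely that bookkeeping, namely Katz's formula $(2\phi_{k,r,f})^{an}=(2G_{k+r+1,-r,f})^{an}$, the comparison $K^*_{k}\big(0,\cdot,s+\tfrac{k}{2},\tau\big)=N^{1-2s}\zeta_k(s,1,\tau,\hat\delta_{(a,b)})$, and the functional equation $\phi_{k,r,f}=\phi_{r,k,\hat f}$, which together produce the precise factor $2N^{-k}$ and the swap of $k$ and $r$ in the indices — none of these can be waved off as standard.
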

	\begin{proof}
		Since both sides of the equation are given by applying the unit root decomposition to classes in $\Sym^{k+r+1}\HdR{1}{\Etriv/\Mtriv}$ it suffices to compare these classes. This can be done on the universal elliptic curve  of level $\Gamma(N)$. It is further enough to compare the associated $\Ccal^\infty$-modular forms obtained by applying the Hodge decomposition on the universal elliptic curve. The $\Ccal^\infty$-modular form associated with $2\phi_{k,r,f}$ is according to Katz \cite[3.6.5, 3.0.5]{katz_padicinterpol} given by
		 \[
		 	(2\phi_{k,r,f})^{an}=(2G_{k+r+1,-r,f})^{an}=(-1)^{k+r+1}k!\left( \left(\frac{N}{A(\tau)}\right)^r \zeta_{k+r+1}\left( \frac{k-r+1}{2},1,\tau,f \right) \right)
		 \]
		 where $\zeta_{k+r+1}$ is the Epstein zeta function obtained by analytic continuation of
		 \begin{equation}\label{PI_eq4}
		 	\zeta_k(s,1,\tau,f)=N^{2s}\sum_{(0,0)\neq (n,m)}\frac{f(n,m)}{(m\tau+n)^k|m\tau+n|^{2s-k}},\quad \mathrm{Re}(s)>1.
		 \end{equation}
		On the other hand the $\Ccal^\infty$-modular form corresponding to $\EisD$ is according to \cref{EP_MainThm} given by
		\begin{align*}
			&\sum_{(0,0)\neq (c,d)\in (\ZZ/D\ZZ)^2} \tilde{e}_{k,r+1}(\frac{Da}{N}\tau+\frac{Db}{N},\frac{Nc}{D}\tau+\frac{Nd}{D})=\\
			=&\frac{(-1)^{k+r}r!}{A^k}\sum_{(0,0)\neq (c,d)\in (\ZZ/D\ZZ)^2}\sum_{(m,n)\in\ZZ^2} \frac{(\frac{Da}{N}\bar{\tau}+\frac{Db}{N}+m\bar{\tau}+n)^k}{(\frac{Da}{N}\tau+\frac{Db}{N}+m\tau+n)^{r+1}}\langle\gamma,\frac{Nc}{D}\tau+\frac{Nd}{D}\rangle=\\
			=&\frac{(-1)^{k+r}r!}{A^k}\left(\sum_{(m,n)\in\ZZ^2} \frac{(\frac{Da}{N}\bar{\tau}+\frac{Db}{N}+m\bar{\tau}+n)^k}{(\frac{Da}{N}\tau+\frac{Db}{N}+m\tau+n)^{r+1}}\underbrace{\sum_{(c,d)\in (\ZZ/D\ZZ)^2}\exp\left(\frac{2\pi i}{D}N(dm-cn)\right)}_{=0\text{ if }(m,n)\notin (D\ZZ)^2} \right)-\\
			&-\tilde{e}_{k,r+1}(\frac{Da}{N}\tau+\frac{Db}{N},0)=\\
			=&\frac{(-1)^{k+r}r!}{A^k}\left(\sum_{\gamma\in\Gamma} \frac{(\frac{Da}{N}\bar{\tau}+\frac{Db}{N}+Dm\bar{\tau}+Dn)^k}{(\frac{Da}{N}\tau+\frac{Db}{N}+Dm\tau+Dn)^{r+1}}\cdot D^2\right)-\tilde{e}_{k,r+1}(\frac{Da}{N}\tau+\frac{Db}{N},0)=\\
			=&D^{k-r+1}\tilde{e}_{k,r+1}(\frac{a}{N}\tau+\frac{b}{N},0)-\tilde{e}_{k,r+1}(\frac{Da}{N}\tau+\frac{Db}{N},0)
		\end{align*}
		The analytic Eisenstein--Kronecker series $\tilde{e}_{k,r+1}(\frac{a}{N}\tau+\frac{b}{N},0)$ appearing in the description of the $\Ccal^\infty$-modular form $\EisD$ are defined by
 		\begin{align*}
		  \tilde{e}_{k,r+1}(\frac{a}{N}\tau+\frac{b}{N},0)&:=(-1)^{k+r+1}r!\frac{K^*_{k+r+1}(\frac{a}{N}\tau+\frac{b}{N},0,r+1,\tau)}{A(\tau)^k}\stackrel{\text{\cite[Prop. 1.3]{bannai_kobayashi}}}{=}\\
		  &=(-1)^{k+r+1}k!\frac{K^*_{k+r+1}(0,\frac{a}{N}\tau+\frac{b}{N},k+1,\tau)}{A(\tau)^r}
		 \end{align*}
			with the Eisenstein--Kronecker--Lerch $K^*_{k}(0,\frac{a}{N}\tau+\frac{b}{N},s,\tau)$ series which is given by analytic continuation of
		 \begin{equation}\label{PI_eq5}
		 	K^*_{k}\left(0,\frac{a}{N}\tau+\frac{b}{N},s,\tau\right):=\sum_{(0,0)\neq(m,n)} \frac{(m\bar{\tau}+n)^k}{|m\tau+n|^{2s}}\exp\left(2\pi i\frac{ma-nb}{N} \right).
		 \end{equation}
		 Comparing \eqref{PI_eq4} and \eqref{PI_eq5} shows
		 \begin{equation}\label{PI_eq6}
		 	K^*_{k}\left(0,\frac{a}{N}\tau+\frac{b}{N},s+\frac{k}{2},\tau\right)=N^{1-2s}\zeta_k(s,1,\tau,\hat{\delta}_{(a,b)}).
		 \end{equation}
		 Using this, we compute
		 \begin{align*}
		 	N^{-k}(2\phi_{k,r,\hat{\delta}_{a,b}})^{an}&=N^{-k}(-1)^{k+r+1}k!\left( \frac{N}{A(\tau)}\right)^r\zeta_{k+r+1}(\frac{k-r+1}{2},1,\tau,\hat{\delta}_{a,b})=\\
		 	&=N^{-k}(-1)^{k+r+1}k!\left(\frac{N}{A(\tau)}\right)^r N^{k-r}K^*_{k+r+1}(0,s,k+1;\tau)=\\
		 	&=\tilde{e}_{k,r+1}(\frac{a}{N}\tau+\frac{b}{N},0)
		 \end{align*}
		 Finally, let us recall from \cite{katz_padicinterpol} the identity $\phi_{k,r,f}=\phi_{r,k,\hat{f}}$. Now, the analytic identity
		 \begin{align*}
		 	N^{-k}&\left[ D^{k-r+1} \left(2 \phi_{r,k,\delta_{a,b}}\right)^{an}-\left(2\phi_{r,k,\delta_{Da,Db}}\right)^{an} \right]=\\
		 	=&D^{k-r+1}\tilde{e}_{k,r+1}(\frac{a}{N}\tau+\frac{b}{N},0)-\tilde{e}_{k,r+1}(D\frac{a}{N}\tau+D\frac{b}{N},0)
		 \end{align*}
		 proves the desired algebraic identity on the universal elliptic curve and thereby the proposition.
	\end{proof}
\section[$p$-adic Eisenstein--Kronecker series and $p$-adic theta functions]{p-adic Eisenstein--Kronecker series and p-adic theta functions}
Let $N,D$ be positive integers co-prime to $p$. Let us again write $\Etriv/\Mtriv$ for the universal trivialized elliptic curve of level $\Gamma(N)$. Let $s$ be the $N$-torsion section given by $(0,0)\neq(a,b)\in(\ZZ/N\ZZ)^2$. Let us write $\pthetaD_{(a,b)}\in \Gamma(\Eftriv\times_\Mtriv \Edftriv,\Ocal_{\Eftriv\times \Edftriv})$ for the $p$-adic theta function associated to the section
\begin{equation}\label{eq_scanD}
	\sum_{e\neq t \in \Eftriv[D]}U_{s,t}^{N,D}(\scan).
\end{equation}
More precisely: The trivialization $\beta:\Eftriv \righteq \Gmf{\Mtriv}$ gives us a canonical invariant differential $\omega:=\beta^* \frac{d S}{1+S}\in\Gamma(\Mtriv,\om_{\Etriv/\Mtriv})$. Since $\Gamma(\Etriv,\Omega^1_{\Etriv/\Mtriv})=\Gamma(\Mtriv,\om_{\Etriv/\Mtriv})$ we obtain an isomorphism
\[
	\Omega^1_{\Etriv/\Mtriv}\righteq \Ocal_{\Mtriv}.
\]
Using this isomorphism allows us to view
\[
	\sum_{e\neq t \in \Eftriv[D]}U_{s,t}^{N,D}(\scan)\Big|_{\Eftriv\times\Edftriv}
\]
as a section of 
\[
\Gamma(\Eftriv\times\Edftriv, ([D]\times[N])^*\Pof)\cong \Gamma(\Eftriv\times\Edftriv,\Pof).
\]
The last isomorphism is induced by $N$ resp. $D$ multiplication on the formal groups. Finally, we define $\pthetaD_{(a,b)}\in \Gamma(\Eftriv\times_\Mtriv \Edftriv,\Ocal_{\Eftriv\times \Edftriv})$ as the image of \eqref{eq_scanD} under the trivialization map
\[
	\Gamma(\Eftriv\times_\Mtriv \Edftriv,\Pof)\righteq \Gamma(\Eftriv\times_\Mtriv \Edftriv,\Ocal_{\Eftriv\times \Edftriv}).
\]
Let us write
\[
	\partial_{\Eftriv}\colon \Ocal_{\Eftriv} \rightarrow \Ocal_{\Eftriv},\quad \partial_{\Edftriv}\colon \Ocal_{\Edftriv} \rightarrow \Ocal_{\Edftriv}
\]
for the invariant derivations associated to the invariant differential $\omega$. The following result relates invariant derivatives of our $p$-adic theta function $\pthetaD_{(a,b)}$ to the $p$-adic Eisenstein--Kronecker series, thus it can be seen as a $p$-adic version of the Laurent expansion 
\begin{equation*}
		\Theta_{s,t}(z,w)=\sum_{k,r\geq 0}\frac{\tilde{e}_{k,r+1}(s,t)}{k!r!}z^rw^k,\quad s,t\notin\Gamma
\end{equation*}
of the Kronecker theta function due to Bannai and Kobayashi. 

\begin{thm}\label{thm_padictheta}
Let $\Etriv/\Mtriv$ be the universal trivialized elliptic curve of level $\Gamma(N)$. We have the following equality of generalized $p$-adic modular forms
\[
	\EispD=(e\times e)^*\left(\partial_{\Edftriv}^{\circ k} \partial_{\Eftriv}^{\circ r} \pthetaD_{(a,b)}\right).
\]
\end{thm}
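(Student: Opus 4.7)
The plan is to bridge both sides via the canonical formal-group lifts into the universal vectorial extensions provided by the unit root decomposition. By the functoriality of the constructions, it suffices to verify the identity on the universal trivialized elliptic curve $\Etriv/\Mtriv$; unravelling definitions, this reduces to comparing the unit root projection of $(e\times e)^*\bigl(\nabla_\sharp^{\circ k}\nabla_\dagger^{\circ r}(q^\sharp\times q^\dagger)^*U^{N,D}_{s,t}(\scan)\bigr)$ (summed over $e\neq t\in \Etriv[D]$) with the iterated invariant derivatives $(e\times e)^*\bigl(\partial_{\Edftriv}^{\circ k}\partial_{\Eftriv}^{\circ r}\pthetaD_{(a,b)}\bigr)$.

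First I would give the unit root decomposition a geometric meaning. Under the Mazur--Messing isomorphism $\HdR{1}{\Etriv/\Mtriv}\cong \om_{\Etriv^\dagger/\Mtriv}$ and its analogue for $\Etriv^\vee$, the unit root splitting defines sections of the quotient $\om_{\Etriv^\dagger/\Mtriv}\twoheadrightarrow \om_V$ (resp.\ its counterpart); after dualization and formal completion these yield canonical lifts
\[
\iota_\dagger\colon \Edftriv\hookrightarrow \widehat{\Etriv^\dagger},\qquad \iota_\sharp\colon \Eftriv\hookrightarrow \widehat{\Etriv^\sharp}
\]
splitting $\widehat{q^\dagger}$ and $\widehat{q^\sharp}$. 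Since $q^\dagger\iota_\dagger$ and $q^\sharp\iota_\sharp$ are the inclusions of the formal groups into $\Etriv^\vee$ and $\Etriv$, the pulled-back bundle $(\iota_\sharp\times\iota_\dagger)^*\Po^{\sharp,\dagger}$ is canonically identified with the formal Poincar\'e bundle $\Pof$ on $\Eftriv\times\Edftriv$. Moreover, the rigidifications used in the construction of $\triv$ coincide with those inherited from $\Po^{\sharp,\dagger}$, so $\triv$ trivializes $(\iota_\sharp\times\iota_\dagger)^*\Po^{\sharp,\dagger}$ compatibly with all structures in sight.

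The heart of the argument is to verify that, under this trivialization, $(\iota_\sharp\times\iota_\dagger)^*\nabla_\dagger$ acts as the trivial invariant connection with covariant derivative $\partial_{\Eftriv}$, and $(\iota_\sharp\times\iota_\dagger)^*\nabla_\sharp$ acts as $\partial_{\Edftriv}$---after using $\omega=\beta^*(dT/(1+T))$ to identify $\om_{\Etriv/\Mtriv}\cong \Ocal_\Mtriv$. The characterization of the unit root lift among all formal-group sections of $q^\dagger$ as the unique one for which the $\triv$-trivializing section of $\Pof$ is horizontal for the pulled-back Mazur--Messing connection supplies this compatibility. Granting it, iterating $\nabla_\sharp^{\circ k}\nabla_\dagger^{\circ r}$ and applying the unit root projection $\Sym^{k+r+1}\HdR{1}{\Etriv/\Mtriv}\twoheadrightarrow \om_{\Etriv/\Mtriv}^{\otimes(k+r+1)}\cong \Ocal_\Mtriv$ translates termwise into applying $\partial_{\Edftriv}^{\circ k}\partial_{\Eftriv}^{\circ r}$ to the trivialized section $\triv\bigl(\sum_{t}U^{N,D}_{s,t}(\scan)\bigr)=\pthetaD_{(a,b)}$, and pulling back along $(e\times e)$ completes the comparison. (The two families of derivations commute on $\Ocal_{\Eftriv\times\Edftriv}$, so no order issue arises.)

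The principal obstacle is the compatibility claim in the previous paragraph: matching the restriction of the Mazur--Messing universal connection along the unit root lift with the trivial invariant connection on the rigidified formal Poincar\'e bundle. Both the unit root lift and the trivialization $\triv$ arise from ordinariness---the former via Frobenius on crystalline cohomology, the latter via the connected-\'etale splitting of $p$-power torsion---and the crux is to identify these two \emph{a priori} different incarnations of the canonical splitting on the formal Poincar\'e bundle. Once this match is made explicit, the theorem follows by the chain of identifications above.
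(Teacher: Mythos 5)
Your overall strategy coincides with the paper's: reduce the identity to showing that, after applying the trivialization $\triv$ and the unit-root projection, the universal connections $\con{\sharp}$ and $\con{\dagger}$ on $\Pof^{\sharp,\dagger}$ become the invariant derivations $\partial_{\Edftriv}$ and $\partial_{\Eftriv}$, so that iterating them and pulling back along $(e\times e)$ computes the Taylor coefficients of $\pthetaD_{(a,b)}$. Your formal-group lifts $\iota_\sharp,\iota_\dagger$ splitting $q^\sharp,q^\dagger$ are just the geometric duals of the projections $\Ocal_{\Eftriv^\dagger}\twoheadrightarrow\Ocal_{\Edftriv}$ (resp. $\Ocal_{\Eftriv^\sharp}\twoheadrightarrow\Ocal_{\Eftriv}$) that the paper uses, so up to that point there is no real difference of route.

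The problem is that the step you yourself label ``the principal obstacle'' is the entire technical content of the theorem, and your proposal does not prove it. You justify the horizontality of $\triv^{-1}(1)$ along the unit-root lift by invoking a ``characterization of the unit root lift as the unique formal-group section for which the trivializing section is horizontal'' --- but that characterization is logically equivalent to the compatibility you need, so appealing to it is circular; it is not a citable standard fact, and no argument for it is given. The paper closes exactly this gap by restricting to the infinitesimal geometric logarithm sheaves $\Lnf^\dagger$, embedding them via the comultiplication maps into $\TSym^n\widehat{\Lcal}^\dagger_1$ (injectivity requires flatness over $\Zp$, cf. \cref{lem_TrivializeLn}), and constructing a Frobenius structure $\Psi\colon\Lnf^\dagger\to\phi_{\Eftriv}^*\Lnf^\dagger$ satisfying $\Psi(\hat{\omega}^{[k,l]})=p^l\,\phi_{\Eftriv}^*\hat{\omega}^{[k,l]}$. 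Writing $\nabla^{(1)}_\dagger(\hat{\omega}^{[0,0]})=f_{1,0}\hat{\omega}^{[1,0]}+f_{0,1}\hat{\omega}^{[0,1]}$, the horizontality of $\Psi$ forces $f_{1,0}=p\cdot\phi_{\Eftriv}^*f_{1,0}$, hence $f_{1,0}=0$: the covariant derivative of the trivializing section lies entirely in the unit-root direction and is killed by the projection, while the $\Ocal$-component reproduces $\partial_{\Eftriv}$. This eigenvalue computation is precisely where ordinariness reconciles the two ``incarnations of the canonical splitting'' you mention, and it is the argument your sketch still owes; without it (or an equivalent), the proof is incomplete.
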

We will present the proof in \cref{sec_proofpadic}. As an immediate consequence of this result we get a new construction of Katz' two-variable $p$-adic Eisenstein measure: Let $R$ be a $p$-adic ring. A $R$-valued $p$-adic measure on a pro-finite Abelian group $G$ is an $R$-linear map $C(G,R)\rightarrow R$, where $C(G,R)$ denotes the $R$-module of $R$-valued continuous functions on $G$. Let us write $\Meas(\Zp^2,R)$ for the set of all $R$-valued measures on $\Zp^2$. According to a theorem of Y. Amice there is an isomorphism of $R$-algebras:
\[
	R\llbracket S,T \rrbracket \righteq \Meas(\Zp^2,R), \quad f\mapsto \mu_f
\]
which is uniquely characterized by
\[
	\int_{\Zp^2} x^k y^l\dd \mu_f(x,y)=\left.\partial_S^{\circ k}\partial_T^{\circ l}  f \right|_{S=T=0}
\]
where $\partial_T=(1+T)\frac{\partial}{\partial T}$ and $\partial_S=(1+S)\frac{\partial}{\partial S}$ are the invariant derivations on the two copies of $\Gmf{R}$. Let us call $\mu_f$ the \emph{Amice transform} associated to $f$.\par
The trivialization $\beta: \Eftriv\righteq \Gmf{\Mtriv}$ together with the autoduality $\Eftriv\righteq \Edftriv$ allows us to view $\pthetaD_{(a,b)}$ as a two-variable power series with coefficients in the ring $R:=\VpN$ of generalized $p$-adic modular forms:
\[
	\pthetaD_{(a,b)}(S,T)\in R\llbracket S,T\rrbracket.
\]
Here, $S$ is the variable of $\Gmf{\Mtriv}$ corresponding to $\Etriv$ and the variable $T$ corresponds to the dual elliptic curve. The Amice transform of the $p$-adic theta function $\pthetaD_{(a,b)}$ gives us a $p$-adic measure on $\Zp\times\Zp$ which will be called $\muEisD$. As an immediate corollary of the above result we get the $p$-adic Eisenstein--Kronecker series as moments of the $p$-adic measure $\muEisD$:
\begin{cor}\label{cor_Eis_moments} The $p$-adic Eisenstein-Kronecker series $\EispD$ appear as moments
	\[
		\EispD=\int_{\Zp\times\Zp} x^k y^r \dd \muEisD(x,y)
	\]
	of the measure $\muEisD$ associated to the $p$-adic theta function $\pthetaD_{(a,b)}(S,T)$.
\end{cor}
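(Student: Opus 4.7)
The plan is to trace through the definition of $\muEisD$ as an Amice transform and then invoke \cref{thm_padictheta} essentially verbatim. The corollary is formal once the dictionaries match up, so the proof will be short.

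First I would unwind the definition of $\muEisD$. By construction $\muEisD$ is the Amice transform $\mu_f$ of the power series $f = \pthetaD_{(a,b)}(S,T) \in \VpN \llbracket S,T\rrbracket$, where $S$ is the formal parameter of $\Gmf{\Mtriv}$ coming from the trivialization $\beta\colon \Eftriv \righteq \Gmf{\Mtriv}$ and $T$ is the analogous parameter for the dual side via the autoduality $\Eftriv \righteq \Edftriv$. Amice's theorem then gives directly
\[
\int_{\Zp\times\Zp} x^k y^r \, \dd \muEisD(x,y) = \bigl.\partial_S^{\circ k}\partial_T^{\circ r} \pthetaD_{(a,b)}(S,T)\bigr|_{S=T=0}.
\]

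Next I would identify the right-hand side with the expression appearing in \cref{thm_padictheta}. The invariant derivations $\partial_S = (1+S)\tfrac{\partial}{\partial S}$ and $\partial_T = (1+T)\tfrac{\partial}{\partial T}$ on $\Gmf{\Mtriv}$ are, after transport of structure via $\beta$ and the autoduality, precisely the invariant derivations $\partial_{\Eftriv}$ on $\Ocal_{\Eftriv}$ and $\partial_{\Edftriv}$ on $\Ocal_{\Edftriv}$ associated to the canonical invariant differential $\omega = \beta^*\tfrac{\dd S}{1+S}$. Evaluation at $S=T=0$ corresponds to pullback along the unit section $e\times e$. Since $\partial_{\Eftriv}$ and $\partial_{\Edftriv}$ act on different tensor factors of $\Ocal_{\Eftriv}\hat\otimes_{\Ocal_\Mtriv}\Ocal_{\Edftriv}$, they commute, so
\[
\bigl.\partial_S^{\circ k}\partial_T^{\circ r} \pthetaD_{(a,b)}\bigr|_{S=T=0} = (e\times e)^*\bigl(\partial_{\Eftriv}^{\circ k}\partial_{\Edftriv}^{\circ r}\pthetaD_{(a,b)}\bigr) = (e\times e)^*\bigl(\partial_{\Edftriv}^{\circ k}\partial_{\Eftriv}^{\circ r}\pthetaD_{(a,b)}\bigr).
\]

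Finally, \cref{thm_padictheta} identifies the last expression with $\EispD$, and combining the displayed equalities yields the claim. There is really no obstacle here: all the substantive work — constructing the $p$-adic theta function from the Kronecker section, matching invariant derivatives against the unit root decomposition of iterated $\nabla_\sharp$, $\nabla_\dagger$ applied to translates of $\scan$, and evaluating at the origin — has already been carried out in \cref{thm_padictheta}. The only care required is bookkeeping of which formal variable corresponds to which side of the Poincar\'e bundle and verifying the elementary compatibility between Amice's theorem and the invariant derivations coming from the trivialization, both of which are straightforward.
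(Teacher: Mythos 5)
Your overall route is exactly the paper's: the corollary is presented there as an immediate consequence of \cref{thm_padictheta} combined with the defining property of the Amice transform, and your unwinding of $\muEisD$ and the identification of $\partial_S,\partial_T$ with the invariant derivations $\partial_{\Eftriv},\partial_{\Edftriv}$ is the intended argument.

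There is, however, one step that fails as written. Commutativity of $\partial_{\Eftriv}$ and $\partial_{\Edftriv}$ (which does hold, since they act on different tensor factors of $\Ocal_{\Eftriv}\hat{\otimes}_{\Ocal_\Mtriv}\Ocal_{\Edftriv}$) gives
\[
\partial_{\Eftriv}^{\circ k}\partial_{\Edftriv}^{\circ r}\pthetaD_{(a,b)}=\partial_{\Edftriv}^{\circ r}\partial_{\Eftriv}^{\circ k}\pthetaD_{(a,b)},
\]
i.e.\ the operators may be reordered but each keeps its own exponent. It does \emph{not} give your asserted identity
\[
(e\times e)^*\bigl(\partial_{\Eftriv}^{\circ k}\partial_{\Edftriv}^{\circ r}\pthetaD_{(a,b)}\bigr)=(e\times e)^*\bigl(\partial_{\Edftriv}^{\circ k}\partial_{\Eftriv}^{\circ r}\pthetaD_{(a,b)}\bigr),
\]
which swaps the exponents between the two derivations and is false for $k\neq r$: it would amount to a symmetry of the $p$-adic theta function under interchanging the two formal variables, which is not available here (compare the expansion $\Theta_{s,t}(z,w)=\sum_{k,r}\tilde{e}_{k,r+1}(s,t)\,z^rw^k/(k!r!)$, where $k$ counts derivatives in the \emph{dual} direction and $r$ in the $E$-direction). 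The correct resolution is purely a matter of fixing the convention: the moment variable $x$ must be paired with the formal parameter whose invariant derivation is $\partial_{\Edftriv}$ and $y$ with the one for $\partial_{\Eftriv}$, so that Amice's formula yields $\int_{\Zp\times\Zp}x^ky^r\,\dd\muEisD=(e\times e)^*\bigl(\partial_{\Edftriv}^{\circ k}\partial_{\Eftriv}^{\circ r}\pthetaD_{(a,b)}\bigr)$ on the nose, matching \cref{thm_padictheta} directly. With that pairing made explicit (rather than the spurious exponent swap), the rest of your argument is complete.
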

This corollary gives a more concise construction of the $p$-adic Eisenstein measure then the original one in \cite{katz_padicinterpol}: In \cite{katz_padicinterpol} the existence of the $p$-adic Eisenstein measure has been proven by checking all predicted $p$-adic congruences between the corresponding $p$-adic modular on the $q$-expansion. In our construction, the $p$-adic Eisenstein measure appears more naturally as the Amice transform of a $p$-adic theta function. We obtain the $p$-adic congruences for free as a by product without checking them in the first instance.

\section{The geometric logarithm sheaves}
For the proof of \cref{thm_padictheta} it is necessary to study the structure of the restrictions of the Poincar\'e bundle to $\Ef\times_S \Edf$ more carefully. In his PhD thesis \cite{rene} Scheider has proven that the de Rham logarithm sheaves appear naturally by restricting the Poincar\'e bundle $\Po^\dagger$ to infinitesimal thickenings of the elliptic curve. At this place it is not necessary to develop the theory of the de Rham logarithm sheaves, but keeping this relation in mind motivates many of the properties of $\hat{\Po}:=\Po|_{\Ef\times \Edf}$.
\subsection{Basic properties} Let $E/S$ be an elliptic curve over a $p$-adic ring $S=\Spec R$ with fiber-wise ordinary reduction. As before, let us write $E^\dagger$ for the universal vectorial extension of the dual of the elliptic curve $E$. The pullback of the Poincar\'e bundle $\Po$ to $E\times_{S} E^\dagger$ is denoted by $\Po^\dagger$ and carries the universal integrable connection $\nabla_{\dagger}$. Motivated by Scheider's results, let us define
\[
	\Lnf:=(\pr_{\Ef})_* \left(\Po|_{\Ef\times_S \Inf^n_e \Ed}\right)
\]
and
\[
	\Lnf^\dagger:=(\pr_{\Ef})_* \left(\Po^\dagger|_{\Ef\times_S \Inf^n_e E^\dagger}\right).
\]
The connection on $\Po^\dagger$ induces an $\Ocal_S$-linear connection $\nabla_\dagger^{(n)}$ on the $\Ocal_{\Ef}$-module $\Lnf^\dagger$. Let us write $\Hcal_{\Ef}:=\HdR{1}{\Ed/S}\otimes\Ocal_{\Ef}$ and $\om_{\Ef}:=\om_{\Ed/S}\otimes\Ocal_{\Ef}$. Since $\Lnf$ is obtained by restriction of $\Po$ to $\Ef\times_S\Inf^n_e\Ed$, we obtain transition maps
\[
	\Lnf\twoheadrightarrow \hat{\Lcal}_{n-1}
\]
by further restriction along $\Ef\times_S\Inf^{n-1}_e \Ed\hookrightarrow \Ef\times_S\Inf^n_E\Ed$. Similarly, we obtain
\[
	\Lnf^\dagger\twoheadrightarrow \hat{\Lcal}_{n-1}^\dagger.
\]
The fact that $\Po^\dagger$ is the pullback of $\Po$ along $E\times_S \Ed\twoheadrightarrow E\times_S E^\dagger$ gives inclusions
\[
	\Lnf\hookrightarrow \Lnf^\dagger.
\]
The decompositions $\Ocal_{\Inf^1_e \Ed}=\Ocal_S\oplus \om_{\Ed/S}$ and $\Ocal_{\Inf^1_e E^\dagger}=\Ocal_S\oplus \Hcal$ induce short exact sequences
\[
	\begin{tikzcd}
		0\ar[r] & \om_{\Ef}\ar[r] & \widehat{\Lcal}_1\ar[r] & \Ocal_{\Ef}\ar[r] & 0
	\end{tikzcd}
\]
and
\begin{equation}\label{eq_ses1}
	\begin{tikzcd}
		0\ar[r] & \Hcal_{\Ef}\ar[r] & \widehat{\Lcal}^\dagger_1\ar[r] & \Ocal_{\Ef}\ar[r] & 0.
	\end{tikzcd}
\end{equation}
The maps in the last exact sequence are horizontal if we equip $\Ocal_{\Ef}$ and $\Hcal_{\Ef}$ with the trivial pullback connections relative $S$, i.e.
\[
	\Hcal_{\Ef}=\Ocal_{\Ef}\otimes_{\Ocal_S}\Hcal\rightarrow \Omega^1_{\Ef/S}\otimes_{\Ocal_S} \Hcal,\quad (f\otimes h)\mapsto df\otimes h
\]
and similarly for $\Ocal_{\Ef}$. In \eqref{eq_Triv3} we have defined a trivialization isomorphism
\[
	\triv\colon \Pof\righteq \Ocal_{\Ef}\hat{\otimes}\Ocal_{\Edftriv}.
\]
This isomorphism induces a $\Ocal_{\Ef}$-linear trivialization map
\[
	\triv^{(n)}\colon \Lnf\righteq \Ocal_{\Ef}\otimes_{\Ocal_S} \Ocal_{\Inf^n_e \Ed}.
\]
Since $\Po^\dagger$ is the pullback of $\Po$ we also obtain
\[
	\triv^{(n)}\colon \Lnf^\dagger\righteq \Ocal_{\Ef}\otimes_{\Ocal_S} \Ocal_{\Inf^n_e E^\dagger}.
\]
Let us observe that in the special case $n=1$ the trivialization map splits the above short exact sequences, i.e. we get
\[
	\triv^{(1)}\colon  \widehat{\Lcal}_1\righteq \Ocal_{\Ef}\oplus \om_{\Ef}
\]
and
\[
	\triv^{(1)}\colon  \widehat{\Lcal}^\dagger_1\righteq \Ocal_{\Ef}\oplus \Hcal_{\Ef}.
\]
The last map is not horizontal if we equip the right hand side with the trivial $S$-connections.

\subsection{Comultiplication maps} In this subsection let us introduce certain canonical comultiplication maps on the infinitesimal geometric logarithm sheaves. Using the $\Gm{S}$-biextension structure of the Poincar\'e bundle let us construct certain natural comultiplication maps on $\Lnf$ and $\Lnf^\dagger$. Such a construction already appeared in the PhD thesis of Ren\'e Scheider \cite[\S 2.4.2]{rene}. As before, let $E/S$ be an elliptic curve over a $p$-adic ring $S=\Spec R$ with fiber-wise ordinary reduction and let us write $E^\dagger$ for the universal vectorial extension of $\Ed$. Let
	\[
		\iota^{\dagger}_n:E_n^\dagger:=\Inf^n_e E^{\dagger}\hookrightarrow E^{\dagger}
	\]
	denote the inclusion of the $n$-th infinitesimal neighbourhood $E_n^\dagger$ of $e$ in $E^\dagger$. For the time being we will use the convention to denote by $\times$ and $\otimes$ the product and tensor product over $S$. Recall that the Poincar\'{e} bundle $\Po^{\dagger}$ is equipped with a natural $\Gm{S}$-biextension structure, i.\,e. isomorphisms
	\begin{alignat}{3}\label{GL_eq1}
		(\mu_E\times \id_{E^{\dagger}})^* \Po^{\dagger} &\righteq \pr_{1,3}^*\Po^\dagger \otimes \pr_{2,3}^*\Po^{\dagger}  &&\quad\text{ on }\quad E\times E\times E^{\dagger} \\
		(\id_E\times \mu_{E^{\dagger}})^* \Po^{\dagger} &\righteq \pr_{1,2}^*\Po^\dagger \otimes \pr_{1,3}^*\Po^{\dagger}  &&\quad\text{ on }\quad E\times E^{\dagger}\times E^{\dagger}\nonumber
	\end{alignat}
	satisfying certain compatibilities, cf. \cite[exp. VII]{SGA7_I}. Here, $\mu$ denotes the multiplication and $\pr_{i,j}$ the projection on the $i$-th and $j$-th component of the product. Now, fix some integers $n,m\geq 1$ and define $\Po^{\dagger}_n:=(\id\times \iota^{\dagger}_n)^*\Po^{\dagger}$. Restricting
	\[
		\mu_{E^{\dagger}}: E^{\dagger}\times E^{\dagger}\rightarrow E^{\dagger}
	\]
	to $E_n^{\dagger}\times E^{\dagger}_m$ gives 
	\[
		\mu_{n,m}: E_n^{\dagger}\times E^{\dagger}_m\rightarrow E^{\dagger}_{n+m}.
	\]
	Restricting \eqref{GL_eq1} along
	\[
		E\times E_n^{\dagger}\times E_m^{\dagger} \hookrightarrow E\times E^{\dagger}\times E^{\dagger}
	\]
	results in
	\[
		\Po^{\dagger}_{n+m}\rightarrow (\pr_{12})^*\Po^{\dagger}_n \otimes_{\Ocal_{E\times E_n^{\dagger}\times E_m^{\dagger} }} (\pr_{13})^*\Po^{\dagger}_m.
	\]
	Using the unit of the adjunction between $(\id\times \mu_{n,m})_*$ and $(\id\times \mu_{n,m})^*$, we obtain
	\[
		\Po^{\dagger}_{n+m}\rightarrow (\id\times \mu_{n,m})_*\left[ (\pr_{12})^*\Po^{\dagger}_n\otimes  (\pr_{13})^*\Po^{\dagger}_m \right].
	\]
	Taking the pushforward along $\pr_E$ gives:
	\[
		\xi_{n,m}: \widehat{\Lcal}_{n+m}^\dagger\rightarrow \widehat{\Lcal}_{n}^\dagger\otimes_{\Ocal_E}\widehat{\Lcal}_{m}^\dagger
	\]
	Since the $\Gm{S}$-biextension structure is compatible with the connection, we get that $\xi_{n,m}$ is horizontal. Using the compatibilities of the $\Gm{S}$-biextension structure, one deduces the following commutative diagrams:
	\begin{equation}\label{GL_eq2}
		\begin{tikzcd}
			\widehat{\Lcal}_{n+m}^\dagger \ar[r,"\xi_{n,m}"]\ar[rd,"\xi_{m,n}",swap] & \widehat{\Lcal}_{n}^\dagger \otimes_{\Ocal_E} \widehat{\Lcal}_{m}^\dagger \ar[d,"can"]  \\
			& \widehat{\Lcal}_{m}^\dagger \otimes_{\Ocal_E} \widehat{\Lcal}_{n}^\dagger
		\end{tikzcd}
	\end{equation}
	and 
	\begin{equation}\label{GL_eq3}
		\begin{tikzcd}
			\widehat{\Lcal}_{n+m+l}^\dagger \ar[r,"\xi_{n+m,l}"]\ar[d,"\xi_{n,m+l}",swap] & \widehat{\Lcal}_{n+m}^\dagger \otimes_{\Ocal_E} \widehat{\Lcal}_{l}^\dagger \ar[d,"\xi_{n,m}\otimes \id"]  \\
			 \widehat{\Lcal}_{n}^\dagger \otimes_{\Ocal_E} \widehat{\Lcal}_{l+m}^\dagger \ar[r,"\id\otimes \xi_{m,l}"] & \widehat{\Lcal}_{n}^\dagger \otimes_{\Ocal_E} \widehat{\Lcal}_{m}^\dagger \otimes_{\Ocal_E} \widehat{\Lcal}_{l}^\dagger.
		\end{tikzcd}
	\end{equation}
	Thus, we obtain well-defined maps
	\[
		\widehat{\Lcal}_{n}^\dagger \rightarrow \underbrace{\widehat{\Lcal}_{1}^\dagger\otimes_{\Ocal_E} ... \otimes_{\Ocal_E} \widehat{\Lcal}_{1}^\dagger }_{n \text{ times}}.
	\]
	The diagram \eqref{GL_eq2} shows that this map is invariant under transposing any of the $n$ factors on the right hand side. Thus, letting the symmetric group $S_n$ act by permuting the factors we see that $\Lnf^\dagger \rightarrow \left( \widehat{\Lcal}_1^\dagger \right)^{\otimes n}$ factors through the invariants of the $S_n$ action. We denote the resulting map by
	\begin{equation}\label{GLdagger_eq4}
		\widehat{\Lcal}_{n}^\dagger \hookrightarrow \TSym^n_{\Ocal_{\Ef}} \widehat{\Lcal}_{1}^\dagger:=\left[ \left( \widehat{\Lcal}_{1}^\dagger\right)^{\otimes n} \right]^{S_n}.
	\end{equation}	 
	This map is horizontal, if we equip the right hand side with the tensor product connection induced by $\nabla^{(1)}_\dagger$. Similarly, we get $\Ocal_{\Ef}$-linear maps
	\begin{equation}\label{GL_eq4}
		\widehat{\Lcal}_{n} \hookrightarrow \TSym^n_{\Ocal_{\Ef}} \widehat{\Lcal}_{1}.
	\end{equation}	
	A similar construction applies to the multiplication of the formal groups $\Ef^\vee$:	
	\[
		\Ocal_{\Edftriv}\rightarrow \Ocal_{\Edftriv}\hat{\otimes} \Ocal_{\Edftriv}.
	\]
	This multiplication induces comultiplication maps
	\begin{equation}\label{eq_formal_comult}
		\Ocal_{\Inf^n_e \Ed}\rightarrow \TSym^n_{\Ocal_S} \Ocal_{\Inf^1_e\Ed}
	\end{equation}
	which are compatible with the comultiplication maps on $\widehat{\Lcal}_{n}$:
	\begin{equation}\label{eq_comult}
		\begin{tikzcd}
			\widehat{\Lcal}_{n}\ar[d,"\cong"]\ar[r] &  \TSym^n_{\Ocal_{\Ef}} \widehat{\Lcal}_{1} \ar[d,"\cong"] \\
			\Ocal_{\Ef}\otimes_{\Ocal_S}\Ocal_{\Inf^n_e E^\vee}\ar[r] & \TSym^n_{\Ocal_{\Ef}}\left( \Ocal_{\Ef}\otimes_{\Ocal_S}\Ocal_{\Inf^1_e E^\vee}\right).
		\end{tikzcd}
	\end{equation}
	In this diagram the lower horizontal map is induced by tensoring \eqref{eq_formal_comult} with $\Ocal_{\Ef}$. The comultiplication maps for $\Ef^\vee$ can be identified with taking iterated invariant derivatives. More precisely, the map
	\[
		\Ocal_{\Inf^n_e \Ed}\rightarrow \TSym^n_{\Ocal_S} \Ocal_{\Inf^1_e\Ed}= \TSym^n_{\Ocal_S} (\Ocal_S\oplus \om_{\Ed/S})\cong \bigoplus_{k=0}^n \TSym^k_{\Ocal_S}\om_{\Ed/S}
	\]
	coincides with the map $f\mapsto \left(e^*(\partial^{\circ k} f\right)_{k=0}^n$ where $\partial$ is the map induced by the invariant derivative $\partial\colon \Ocal_{\Edf}\rightarrow \om_{\Ed/S}\otimes\Ocal_{\Edf}$.
	\begin{lem}\label{lem_TrivializeLn} Let us assume that $S=\Spec R$ is flat over $\Zp$. Under this assumption, the comultiplication maps
		\[
			\widehat{\Lcal}_{n} \hookrightarrow \TSym^n \widehat{\Lcal}_{1},\quad \widehat{\Lcal}^\dagger_{n} \hookrightarrow \TSym^n \widehat{\Lcal}^\dagger_{1}
		\]
		are injective and isomorphisms on the generic fiber $E_{\Qp}:=E\times_S\Spec \Qp$
		\[
			\widehat{\Lcal}_{n,E_{\Qp}} \righteq \TSym^n \widehat{\Lcal}_{1,E_{\Qp}},\quad \widehat{\Lcal}^\dagger_{n,E_{\Qp}} \righteq \TSym^n \widehat{\Lcal}^\dagger_{1,E_{\Qp}}.
		\]
	\end{lem}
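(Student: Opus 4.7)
The plan is to reduce, via the trivialization~\eqref{eq_Triv3} and the compatibility diagram~\eqref{eq_comult} (together with its evident analogue for $E^\dagger$), the assertions for $\Lnf$ and $\Lnf^\dagger$ to the analogous statements about the comultiplication
\[
	\mathrm{com}_n\colon \Ocal_{\Inf^n_e G}\longrightarrow \TSym^n_{\Ocal_S}\Ocal_{\Inf^1_e G}
\]
on the formal group at the origin, where $G=\Ed$ (resp.\ $G=E^\dagger$). Since the trivialization is an $\Ocal_{\Ef}$-linear isomorphism between free $\Ocal_{\Ef}$-modules and the comultiplication on $\Lnf$ (resp.\ $\Lnf^\dagger$) is the $\Ocal_{\Ef}$-linear extension of $\mathrm{com}_n$, the required injectivity and generic isomorphism statements transport between the two sides. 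Both source and target of $\mathrm{com}_n$ are locally free $\Ocal_S$-modules of the same rank ($n+1$ in the $\Ed$-case, $\binom{n+2}{2}$ in the $E^\dagger$-case), so it suffices to prove $(\mathrm{i})$ that $\mathrm{com}_n$ becomes an isomorphism after inverting $p$, and $(\mathrm{ii})$ that $\mathrm{com}_n$ is injective over $R$.

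For $(\mathrm{i})$ I would work over a $\Qp$-algebra, where the formal group $\hat G$ admits a formal logarithm giving an isomorphism $\hat G\cong \widehat{\mathbb{G}}_a^{\dim G}$. In the resulting linear coordinates $u_1,\dots,u_d$ the invariant derivations become the ordinary partial derivatives $\partial/\partial u_i$. The excerpt identifies $\mathrm{com}_n$, via the decomposition $\TSym^n\Ocal_{\Inf^1_e G}\cong\bigoplus_{k=0}^{n}\TSym^k\om_{G/S}$, with the map $f\mapsto(e^*\partial^{\circ k}f)_{k}$; applied to the monomial basis $u^{\underline{i}}=u_1^{i_1}\cdots u_d^{i_d}$ (with $|\underline{i}|\le n$), it produces the element $\underline{i}!$ times a basis vector in the summand of degree $|\underline{i}|$, and zero in the other summands. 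The resulting matrix is diagonal with factorial entries, hence invertible over $\Qp$.

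For $(\mathrm{ii})$, the $\Zp$-flatness of $R$ forces the free $R$-module $\Ocal_{\Inf^n_e G}$ to embed into its generic fibre $\Ocal_{\Inf^n_e G}\otimes_{\Zp}\Qp$. Composing this embedding with the isomorphism $\mathrm{com}_n\otimes_{\Zp}\Qp$ from $(\mathrm{i})$ yields injectivity of $\mathrm{com}_n$ itself. The main conceptual input is the linearization of the formal group via the formal logarithm, which is what makes the $\Zp$-flatness hypothesis indispensable; once this is in place the argument reduces to a diagonal matrix computation over $\Qp$.
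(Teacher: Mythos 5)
Your proposal is correct and follows essentially the same route as the paper: both reduce, via the compatibility of the trivialization with the comultiplications (diagram \eqref{eq_comult}), to showing that $\Ocal_{\Inf^n_e G}\rightarrow \TSym^n\Ocal_{\Inf^1_e G}$, identified with $f\mapsto (e^*\partial^{\circ k}f)_{k}$, is injective for $R$ flat over $\Zp$ and an isomorphism after inverting $p$. The only difference is that the paper asserts this last claim without further justification, whereas you supply the justification (formal logarithm over $\Qp$, diagonal matrix with factorial entries, then flatness to descend injectivity), which is a correct filling-in of the omitted detail.
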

	\begin{proof}
		We give the proof for $\widehat{\Lcal}_{n}$. The proof for $\widehat{\Lcal}^\dagger_{n}$ is completely analogous. By \eqref{eq_comult} it is enough to prove that the map
		\[
			\Ocal_{\Inf^n_e E^\vee}\rightarrow \TSym^n_{\Ocal_{S}}\left( \Ocal_{\Inf^1_e E^\vee}\right)
		\]
		is injective and an isomorphism after inverting $p$. The isomorphism $\Ocal_{\Inf^1_e E^\vee}\cong \Ocal_{S}\oplus \om_{\Ed/S}$ gives
		\[
			\Ocal_{\Inf^n_e E^\vee}\rightarrow \TSym^n_{\Ocal_{S}}\left( \Ocal_{\Inf^1_e E^\vee}\right)=\bigoplus_{k=0}^n \TSym^k_{\Ocal_S}\om_{\Ed/S}.
		\]
		But this map is just the map sending $f\in\Ocal_{\Inf^n_e E^\vee}$ to $(e^*(\partial^{\circ k} f) )_{k=0}^n$ which is injective if $R$ is flat over $\Zp$ and an isomorphism if $p$ is invertible.
	\end{proof}
	By combining the comultiplication with the trivialization $\triv^{(1)}\colon \widehat{\Lcal}_{1}\righteq \Ocal_{\Eftriv}\oplus \om_{\Eftriv}$ we obtain
	\begin{equation}\label{eq_Lntriv}
		\widehat{\Lcal}_{n}\rightarrow \TSym^n_{\Ocal_{\Ef}} \widehat{\Lcal}_{1}=\TSym^n_{\Ocal_{\Ef}} (\Ocal_{\Ef}\oplus \om_{\Ef})\righteq \bigoplus_{k=0}^n \TSym^k_{\Ocal_{\Ef}} \om_{\Ef}.
	\end{equation}
	and
	\begin{equation}\label{eq_Lndtriv}
		\widehat{\Lcal}^\dagger_{n}\rightarrow \TSym^n_{\Ocal_{\Ef}} \widehat{\Lcal}^\dagger_{1}=\TSym^n_{\Ocal_{\Ef}}( \Ocal_{\Ef}\oplus \Hcal_{\Ef})\righteq \bigoplus_{k=0}^n \TSym^k_{\Ocal_{\Ef}} \Hcal_{\Ef}.
	\end{equation}
	In particular, whenever we are in a situation where $\Hcal$ and $\om_{\Ed/S}$ can be generated by global sections, such generators and \cref{lem_TrivializeLn} give us an explicit $\Ocal_{\Ef}$-basis of $\Lnf$ and $\Lnf^\dagger$ on the generic fiber.  
\subsection{The Frobenius structure}
Let $\Etriv/\Mtriv$ be the universal trivialized elliptic curve with $\Gamma(N)$-level structure. Our next aim is to define a \emph{Frobenius structure} on $\Lnf$, i.e. an isomorphism
\[
	 	\Lnf\righteq \phi_{\Eftriv}^* \Lnf
\]
where $\phi_{\Eftriv}$ is a Frobenius lift. Let $\varphi:\Etriv\rightarrow \Etriv'=\Etriv/C$ be the quotient of the universal trivialized elliptic curve by its canonical subgroup. Since $\Etriv'$ is again a trivialized elliptic curve with $\Gamma(N)$-level structure, it is the pullback of $E/M$ along a unique map $\Frob\colon M\rightarrow M$. This gives us a diagram 	 
	 \begin{equation}\label{PI_eq7}
	 	\begin{tikzcd}
	 		\Etriv \ar[r,"\varphi"]\ar[rd,"\pi"] & \Etriv'\ar[r]\ar[r,"\widetilde{\mathrm{Frob}}"]\ar[d,"\pi_{\Etriv'}"] & \Etriv\ar[d,"\pi"]\\
	 		& \Mtriv \ar[r,"\Frob"] & \Mtriv
	 	\end{tikzcd}
	 \end{equation}
	 with the square being Cartesian. Let us define 	 
	 \[
	 \phi_{\Eftriv}:=\left(\widetilde{\mathrm{Frob}}\circ \varphi\right)\Big|_{\Eftriv}:\Eftriv\rightarrow \Eftriv
	 \]
	 as the restriction of the upper horizontal composition in the above diagram to the formal group $\Eftriv$. The map $\phi_{\Eftriv}$ gives us a Frobenius lift on the formal group $\Eftriv$. We want to construct an $\Ocal_{\Eftriv}$-linear isomorphism
	 \[
	 	(\pr_{\Eftriv})_*\Pof \righteq (\pr_{\Eftriv})_*(\phi_{\Eftriv}\times \id_{\Edftriv})^*\Pof.
	 \]
	We will do this in two steps: Let us write $\Pof'$ for the Poincar\'e bundle of $\Etriv'$ restricted to the formal scheme $\widehat{\Etriv'}\times_\Mtriv \widehat{\Etriv'}^\vee$.  Restricting the map $\gamma_{\id,\varphi^\vee}$ to $\widehat{\Etriv'}\times_\Mtriv \widehat{\Etriv'}^\vee$ gives the isomorphism
	\[
		(\id\times\varphi^\vee|_{\widehat{\Etriv'}^\vee})^*\Pof\righteq (\varphi|_{\Eftriv}\times\id)^*\Pof'
	\]
	 The dual isogeny $\varphi^\vee$ is \'etale, hence it induces an isomorphism of formal groups over $\Mtriv$:
	\[
		\varphi^\vee|_{(\widehat{\Etriv'})^\vee}\colon (\widehat{\Etriv'})^\vee\righteq \Edftriv.
	\]
	We get
	\begin{equation}\label{eq_Phi1}
		(\pr_{\Eftriv})_*\Po\Big|_{\Eftriv\times\Edftriv}\righteq (\pr_{\Eftriv})_* (\varphi|_{\Eftriv}\times\id)^*\left(\Po'\Big|_{\widehat{\Etriv'}\times \widehat{\Etriv'}^\vee}\right).
	\end{equation}
	On the other hand, by the compatibility of the Poincar\'e bundle with base change along the Cartesian diagram
	 \begin{equation*}
	 	\begin{tikzcd}
	 		\Etriv'\ar[r]\ar[r,"\widetilde{\mathrm{Frob}}"]\ar[d,"\pi_{\Etriv'}"] & \Etriv\ar[d,"\pi"]\\
	 		\Mtriv'=\Mtriv \ar[r,"\Frob"] & \Mtriv,
	 	\end{tikzcd}
	 \end{equation*}
	 and using the identification $\Etriv'\times_{\Mtriv'} \Etriv'^\vee=\Etriv'\times_\Mtriv \Etriv^\vee$ we get an isomorphism
	 \begin{equation}\label{eq_Phi2}
	 	(\widetilde{\mathrm{Frob}}\times_\Mtriv \id_{\Edtriv})^*\Po\righteq \Po'.
	 \end{equation}
	 Composing \eqref{eq_Phi1} with \eqref{eq_Phi2} gives the desired isomorphism of $\Ocal_{\Eftriv}$-modules:
	  \[
	 	(\pr_{\Eftriv})_*\Pof \righteq (\pr_{\Eftriv})_*(\phi_{\Eftriv}\times \id_{\Edftriv})^*\Pof.
	 \]
	 Replacing $(\pr_{\Eftriv})_*\Pof$ by $\Lnf=(\pr_{\Eftriv})_*(\Pof|_{\Eftriv\times \Inf^n_e \Edtriv})$ in the above construction gives an $\Ocal_{\Eftriv}$-linear isomorphism
	 \[
	 	\Psi\colon \Lnf\righteq \phi_{\Eftriv}^* \Lnf.
	 \]
	 Let us write $\Po^{\dagger}$ for the pullback of $\Po$ along $\Etriv\times_\Mtriv\Etriv^\dagger\rightarrow \Etriv\times_\Mtriv \Edtriv$, where $\Etriv^\dagger$ is the universal vectorial extension of $\Edtriv$. Let us write $\Pof^\dagger$ for the restriction along the formal completion $\Eftriv\times_\Mtriv\Eftriv^\dagger$. Along the same lines as above, we obtain $\Ocal_{\Eftriv}$-linear morphisms
	 \begin{equation}\label{eq_Phidagger}
	 	(\pr_{\Eftriv})_*\Pof^\dagger\rightarrow (\pr_{\Eftriv})_*(\phi_{\Eftriv}\times \id_{\Eftriv^\dagger})^*\Pof^\dagger.
	 \end{equation}
	 and
	 \begin{equation}\label{eq_Phidagger_Ln}
	 	\Psi\colon\Lnf^\dagger\rightarrow \phi_{\Eftriv}^*\Lnf^\dagger.
	 \end{equation}
	 This map is horizontal if both sides are equipped with the canonical $\Mtriv$-connections.\par 

\subsection{A basis for the geometric logarithm sheaves} Let $\Etriv/\Mtriv$ be the universal trivialized elliptic curve with $\Gamma(N)$-level structure. Now let us construct a canonical basis for the infinitesimal geometric logarithm sheaves on the universal trivialized elliptic curve. In  \eqref{eq_Lntriv} and \eqref{eq_Lndtriv} we have defined maps
	\[
		\widehat{\Lcal}_{n}\rightarrow \bigoplus_{k=0}^n \TSym^k_{\Ocal_{\Eftriv}} \om_{\Eftriv}
	\]
	and
	\[
		\widehat{\Lcal}^\dagger_{n}\rightarrow  \bigoplus_{k=0}^n \TSym^k_{\Ocal_{\Eftriv}} \Hcal_{\Eftriv}.
	\]
	The rigidification $\Edftriv\righteq \Eftriv\righteq \Gmf{\Mtriv}$ gives a canonical generator $\omega\in\Gamma(\Mtriv,\om_{\Edtriv/\Mtriv})$. Now $\omega$ generates $\om_{\Eftriv}$ as $\Ocal_{\Eftriv}$-module. The tensor symmetric algebra $\TSym^\bullet \om_{\Eftriv}$ is a graded ring with divided powers given by
\[
	(\cdot)^{[k]}\colon  \om_{\Eftriv}\rightarrow \TSym^{k} \om_{\Eftriv}, \quad x\mapsto x^{[k]}:=\underbrace{x\otimes...\otimes x}_{k\text{-times}}.
\]	
	for any positive integer $k$. Using the isomorphism $\widehat{\Lcal}_1\cong \Ocal_{\Eftriv}\oplus \om_{\Eftriv}$, we may view $\omega$ as a section of $\widehat{\Lcal}_1$. The divided power structure give a canonical $\Ocal_{\Eftriv}$-basis $(\hat{\omega}^{[k]})_{k=0}^n$ of $\TSym^n \widehat{\Lcal}_{1}$ defined by
	\[
		\hat{\omega}^{[k]}:=\omega^{[k]}\in \Gamma(\Eftriv,\TSym^n \widehat{\Lcal}_{1}).
	\]
	Similarly, let us write $[\omega]$ for the image of $\omega$ under the inclusion of the Hodge filtration:
	\[
		\om_{\Edtriv/\Mtriv}\hookrightarrow \Hcal.
	\]
	There is a unique section $[u]\in\Gamma(\Mtriv,\UR)$ in the unit root part of $\Hcal$ with $\langle [u],[\omega]\rangle=1$. This gives a basis $([\omega],[u])$ of $\Hcal$. Let us write
	\[
		\hat{\omega}^{[k,l]}:=[\omega]^{[k]}\cdot [u]^{[l]}\in \Gamma(\Eftriv,\TSym^n \widehat{\Lcal}^\dagger_{1})
	\]
	for the basis induced by $([\omega],[u])$ using the divided power structure on the tensor symmetric algebra $\TSym^{\bullet} \widehat{\Lcal}^\dagger_{1}$.
	\begin{lem}
		We have canonical $\Ocal_{\Eftriv}$-linear decompositions:
		\[
			\widehat{\Lcal}_{n,\Etriv_{\Qp}}\righteq \bigoplus_{k=0}^n \hat{\omega}^{[k]}\cdot \Ocal_{\Eftriv_{\Qp}},\quad \widehat{\Lcal}^\dagger_{n,\Etriv_{\Qp}}\righteq \bigoplus_{k+l\leq n} \hat{\omega}^{[k,l]}\cdot \Ocal_{\Eftriv_{\Qp}}.
		\]
		These decompositions are compatible with the transition maps 
		\[ 
		\widehat{\Lcal}_{n,\Etriv_{\Qp}}\twoheadrightarrow \widehat{\Lcal}_{n-1,\Etriv_{\Qp}},\quad \widehat{\Lcal}^\dagger_{n,\Etriv_{\Qp}}\twoheadrightarrow \widehat{\Lcal}^\dagger_{n-1,\Etriv_{\Qp}}
		\] 
		and the inclusion $\widehat{\Lcal}_{n,\Etriv_{\Qp}}\hookrightarrow \widehat{\Lcal}^\dagger_{n,\Etriv_{\Qp}}$.
	\end{lem}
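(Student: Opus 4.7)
The plan is to combine the comultiplication isomorphisms provided by the preceding lemma with the trivializations $\triv^{(1)}$ and, in the dagger case, the unit root decomposition, and then identify the summands using the divided power structure on the tensor symmetric algebra.

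First I will invoke \cref{lem_TrivializeLn}, which after base change to the generic fiber turns the comultiplication maps \eqref{GL_eq4} and \eqref{GLdagger_eq4} into $\Ocal_{\Eftriv_{\Qp}}$-linear isomorphisms
\[
\widehat{\Lcal}_{n,\Etriv_{\Qp}} \righteq \TSym^n_{\Ocal_{\Eftriv_{\Qp}}} \widehat{\Lcal}_{1,\Etriv_{\Qp}}, \qquad \widehat{\Lcal}^\dagger_{n,\Etriv_{\Qp}} \righteq \TSym^n_{\Ocal_{\Eftriv_{\Qp}}} \widehat{\Lcal}^\dagger_{1,\Etriv_{\Qp}}.
\]
Composing with the trivializations $\triv^{(1)}\colon \widehat{\Lcal}_1 \cong \Ocal_{\Eftriv} \oplus \om_{\Eftriv}$ and $\triv^{(1)}\colon \widehat{\Lcal}^\dagger_1 \cong \Ocal_{\Eftriv} \oplus \Hcal_{\Eftriv}$ and using the canonical decomposition $\TSym^n(A \oplus B) \cong \bigoplus_{i+j=n} \TSym^i A \otimes \TSym^j B$ together with $\TSym^i \Ocal = \Ocal$ will yield
\[
\widehat{\Lcal}_{n,\Etriv_{\Qp}} \cong \bigoplus_{k=0}^n \TSym^k \om_{\Eftriv_{\Qp}}, \qquad \widehat{\Lcal}^\dagger_{n,\Etriv_{\Qp}} \cong \bigoplus_{m=0}^n \TSym^m \Hcal_{\Eftriv_{\Qp}}.
\]
Since $\omega$ globally generates the invertible sheaf $\om_{\Eftriv}$, its divided power $\hat{\omega}^{[k]}$ gives an $\Ocal_{\Eftriv}$-basis of $\TSym^k \om_{\Eftriv}$, which yields the first decomposition.

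For the dagger version I will further apply the unit root decomposition $\Hcal \cong \om \oplus \UR$ (valid over $\Mtriv$) to obtain $\TSym^m \Hcal \cong \bigoplus_{k+l=m} \TSym^k \om \otimes \TSym^l \UR$; since $[\omega]$ and $[u]$ are global generators of the respective rank-one summands, the divided power $\hat{\omega}^{[k,l]} = [\omega]^{[k]} \cdot [u]^{[l]}$ gives an $\Ocal$-basis of the corresponding summand, and summing over $m = k+l \leq n$ produces the second displayed decomposition.

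The compatibility with the transition maps will follow from the commutative diagrams \eqref{GL_eq2} and \eqref{GL_eq3}: these ensure that the comultiplications intertwine the transition maps with the canonical projections $\TSym^n \twoheadrightarrow \TSym^{n-1}$ obtained by applying $\xi_{n-1,1}$ and then projecting the second factor onto its constant part $\Ocal_{\Eftriv}$. In terms of the bases this simply drops the summands with $k=n$ (respectively with $k+l=n$). Finally, the inclusion $\widehat{\Lcal}_n \hookrightarrow \widehat{\Lcal}^\dagger_n$ is induced at the rank-one level from the Hodge filtration inclusion $\om \hookrightarrow \Hcal$, so it sends $\hat{\omega}^{[k]}$ to $\hat{\omega}^{[k,0]}$ as required. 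The main obstacle here is purely bookkeeping: one has to check that the comultiplication maps are genuinely compatible with these natural operations, but this follows formally from the $\Gm{S}$-biextension structure of the Poincar\'e bundle used to construct them, so no essentially new idea is needed.
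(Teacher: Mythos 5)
Your proposal is correct and follows essentially the same route as the paper: the paper likewise deduces the decompositions from \cref{lem_TrivializeLn} combined with the divided-power bases $(\hat{\omega}^{[k]})$ and $(\hat{\omega}^{[k,l]})$ of $\TSym^n\widehat{\Lcal}_1$ and $\TSym^n\widehat{\Lcal}_1^\dagger$, and disposes of the compatibilities by tracing back the definitions. You merely spell out the bookkeeping (the splitting $\TSym^n(A\oplus B)\cong\bigoplus\TSym^iA\otimes\TSym^jB$, the unit root decomposition of $\Hcal$, and the identification of the transition maps via $\xi_{n-1,1}$) that the paper leaves implicit.
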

	\begin{proof}
		The decomposition is an immediate consequence of the bases $(\hat{\omega}^{[k,l]})_{k+l\leq n}$ and $(\hat{\omega}^{[k]})_{k=0}^n$ of $\TSym^{n} \widehat{\Lcal}^\dagger_{1}$ and $\TSym^{n} \widehat{\Lcal}_{1}$ together with \cref{lem_TrivializeLn}. The compatibility with transition maps and the canonical inclusion follow by tracing back the definitions.
	\end{proof}

	\begin{lem}
		The Frobenius structure
		\[
			\Psi\colon\Lnf^\dagger\rightarrow \phi_{\Eftriv}^*\Lnf^\dagger
		\]
		is explicitly given by the formula
		\[
			\Psi(\hat{\omega}^{[k,l]})=p^l\cdot \phi_{\Eftriv}^*\hat{\omega}^{[k,l]}.
		\]
	\end{lem}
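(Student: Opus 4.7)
The plan is to reduce to the case $n=1$ via the multiplicativity of $\Psi$, and then to compute its action on the two generators $[\omega]$ and $[u]$ of $\widehat{\Lcal}_1^\dagger$. Since both ingredients in the construction of $\Psi$—the rigidified isomorphism $\gamma_{\id,\varphi^\vee}$ and the base change $\widetilde{\mathrm{Frob}}^*\Po \cong \Po'$—are morphisms of biextensions, $\Psi$ intertwines the comultiplication maps $\widehat{\Lcal}_n^\dagger \hookrightarrow \TSym^n \widehat{\Lcal}_1^\dagger$ of Subsection~4.2. By the preceding lemma these maps become isomorphisms on the generic fibre, and the source is $p$-torsion-free by the standing flatness hypothesis; hence $\Psi$ is determined by its restriction to $\widehat{\Lcal}_1^\dagger$ and acts multiplicatively on divided powers. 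Once $\Psi([\omega])$ and $\Psi([u])$ are known, the formula follows from
\[
\Psi(\hat{\omega}^{[k,l]}) = \Psi([\omega])^{[k]} \cdot \Psi([u])^{[l]}.
\]

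For the individual computations, I will use that the short exact sequence $0 \to \Hcal_{\Eftriv} \to \widehat{\Lcal}_1^\dagger \to \Ocal_{\Eftriv} \to 0$ is $\Psi$-equivariant, since the construction of $\Psi$ respects the two rigidifications of $\Po$; moreover $\Psi$ acts as the identity on the $\Ocal_{\Eftriv}$-quotient, settling the case $(k,l)=(0,0)$. For the Hodge generator $[\omega]\in\om_{\Eftriv}\subseteq\Hcal_{\Eftriv}$, I restrict $\Psi$ along the canonical inclusion $\widehat{\Lcal}_1 \hookrightarrow \widehat{\Lcal}_1^\dagger$ to the non-dagger logarithm sheaf, whose Frobenius structure is built only from the \'etale isomorphism $\varphi^\vee|_{(\widehat{\Etriv'})^\vee}$ and the base change. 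Under the canonical identification $\om_{\Etriv'/\Mtriv} = \widetilde{\mathrm{Frob}}^* \om_{\Etriv/\Mtriv}$ and the defining equality $\phi_{\Eftriv} = \widetilde{\mathrm{Frob}} \circ \varphi|_{\Eftriv}$, unwinding the trivialization~\eqref{eq_Lntriv} yields $\Psi([\omega]) = \phi_{\Eftriv}^*[\omega]$.

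The delicate step is the computation of $\Psi([u])$, and this is the main obstacle. Since $\Psi$ is horizontal and the unit-root subspace $\UR \subseteq \HdR{1}{\Etriv/\Mtriv}$ is itself horizontal and Frobenius-stable, horizontality forces $\Psi([u]) = c \cdot \phi_{\Eftriv}^*[u]$ for a unique scalar $c\in\Gamma(\Mtriv,\Ocal_{\Mtriv})$. To pin down $c = p$, I will use the compatibility of $\Psi$ with the autoduality pairing $\langle\cdot,\cdot\rangle$ on $\HdR{1}{\Etriv/\Mtriv}$, which satisfies the twisted identity $\langle F(x),F(y)\rangle = p\cdot\mathrm{Frob}^*\langle x,y\rangle$; dually, $\Psi$ is identified on the generic fibre with the Verschiebung $pF^{-1}$, which acts as $1$ on $\om$ and $p$ on $\UR$. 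Combined with the normalization $\langle[u],[\omega]\rangle = 1$ and the already-established $\Psi([\omega]) = \phi_{\Eftriv}^*[\omega]$, this forces $c = p$. Putting the three steps together then produces $\Psi(\hat{\omega}^{[k,l]}) = p^l \cdot \phi_{\Eftriv}^*\hat{\omega}^{[k,l]}$ as claimed; the hardest part remains the precise matching of $\Psi|_{\Hcal_{\Eftriv}}$ with a Verschiebung-type dual Frobenius on $\HdR{1}{\Etriv/\Mtriv}$, which requires careful bookkeeping of the Frobenius lift on the formal group, the \'etaleness of the dual isogeny, and the behaviour of the $\mathbb{G}_m$-biextension under base change.
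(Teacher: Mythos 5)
Your proof is correct and follows the same skeleton as the paper's: reduce to $n=1$ via the compatibility of $\Psi$ with the comultiplication maps $\Lnf^\dagger\hookrightarrow\TSym^n\widehat{\Lcal}_1^\dagger$ (the paper phrases this as a commutative square and does not spell out the biextension justification you give, which is the right reason), then treat the three generators $\hat{\omega}^{[0,0]},\hat{\omega}^{[1,0]},\hat{\omega}^{[0,1]}$ separately. Where you genuinely diverge is in the final, delicate step. The paper identifies $\Psi|_{\Hcal_{\Eftriv}}$ with the map $\HdR{1}{\Edtriv/\Mtriv}\rightarrow\Frob^*\HdR{1}{\Edtriv/\Mtriv}$ induced by the Frobenius diagram and then simply \emph{cites} the known computation (Bannai--Kings, p.~23) that $[\omega]\mapsto\Frob^*[\omega]$ and $[u]\mapsto p\cdot\Frob^*[u]$. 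You instead re-derive this: $[\omega]\mapsto\Frob^*[\omega]$ from the \'etaleness of $\varphi^\vee$ and the normalization of $\omega$ by the trivialization, and the constant $c=p$ in $\Psi([u])=c\cdot\phi_{\Eftriv}^*[u]$ from the identity $\langle\Psi(x),\Psi(y)\rangle=\deg(\varphi)\cdot\Frob^*\langle x,y\rangle=p\cdot\Frob^*\langle x,y\rangle$ together with $\langle[u],[\omega]\rangle=1$. That pairing argument is sound and makes the proof self-contained where the paper outsources it; the price is that you must actually verify the compatibility of the construction of $\Psi$ with Deligne's pairing, which is standard but is exactly the ``careful bookkeeping'' you flag. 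Two small imprecisions worth fixing: what forces $\Psi([u])$ to lie in $\Frob^*\UR$ is the $F$-stability of the unit root space (its defining property), not horizontality, which the paper only needs in the subsequent lemma on the connection; and ``$\Psi$ acts as the identity on the $\Ocal_{\Eftriv}$-quotient'' should read ``as the canonical isomorphism $\Ocal_{\Eftriv}\cong\phi_{\Eftriv}^*\Ocal_{\Eftriv}$,'' since $\Psi$ is $\phi_{\Eftriv}$-semilinear in nature.
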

	\begin{proof}
		By the commutativity of the diagram
		\[
			\begin{tikzcd}
				\Lnf^\dagger\ar[r]\ar[d,"\Psi"] & \TSym^n\hat{\Lcal}_1^\dagger\ar[d,"\TSym^n\Psi"]\\
				\phi_{\Eftriv}^*\Lnf^\dagger\ar[r] & \TSym^n\phi_{\Eftriv}^*\hat{\Lcal}_1^\dagger
			\end{tikzcd}
		\]
		we are reduced to prove the formulas in the case $n=1$, i.e.
			\begin{align*}
	 	\Psi(\hat{\omega}^{[0,0]})&=\phi_{\Eftriv}^*\hat{\omega}^{[0,0]}\\
	 	\Psi(\hat{\omega}^{[1,0]})&=\phi_{\Eftriv}^*\hat{\omega}^{[1,0]}\\
	 	\Psi(\hat{\omega}^{[0,1]})&=p\cdot\phi_{\Eftriv}^*\hat{\omega}^{[0,1]}.
	 \end{align*}
	 Since $\Psi$ is compatible with the decomposition $\hat{\Lcal}_1^\dagger=\Ocal_{\Eftriv}\oplus \Hcal_{\Eftriv}$, the Frobenius structure on 
	 $\Ocal_{\Eftriv}$ is the canonical isomorphism $\Ocal_{\Eftriv}\cong \phi_{\Eftriv}^*\Ocal_{\Eftriv}$ induced by $\phi_{\Eftriv}\colon\Eftriv\rightarrow \Eftriv$. Since $\hat{\omega}^{[0,0]}$ corresponds to $1\in \Ocal_{\Eftriv}$, we get $\Psi(\hat{\omega}^{[0,0]})=\phi_{\Eftriv}^*\hat{\omega}^{[0,0]}$. The restriction of $\Psi$ to $\Hcal_{\Eftriv}$ is by construction the $\Ocal_{\Eftriv}$-linear extension of the map
	 \[
	 	\Hcal=\HdR{1}{\Edtriv/\Mtriv}\rightarrow \HdR{1}{(\Etriv')^\vee/\Mtriv}\cong \Frob^* \HdR{1}{\Edtriv/\Mtriv}
	 \]
	 induced by the diagram
	 	 \begin{equation*}
	 	\begin{tikzcd}
	 		\Etriv \ar[r,"\varphi"]\ar[rd,"\pi"] & \Etriv'\ar[r]\ar[r,"\widetilde{\mathrm{Frob}}"]\ar[d,"\pi_{\Etriv'}"] & \Etriv\ar[d,"\pi"]\\
	 		& \Mtriv \ar[r,"\Frob"] & \Mtriv.
	 	\end{tikzcd}
	 \end{equation*}
	 Since $\hat{\omega}^{[0,1]}$ corresponds to $[u]$ and $\hat{\omega}^{[1,0]}$ corresponds to $[\omega]$, we have to prove that $[u]$ maps to $p\cdot \Frob^*[u]$ and $[\omega]$ maps to $\Frob^*[\omega]$. But this is known to be true, see for example \cite[p. 23]{bannai_kings}.
	\end{proof}

	\begin{lem}
		The $\Mtriv$-connection on $\widehat{\Lcal}^\dagger_{1}$ induces an $\Mtriv$-connection $\nabla_{(n)}$ on $\TSym^n \widehat{\Lcal}^\dagger_{1}$ which is explicitly given by the formula
		\[
			\con{(n)}(\hat{\omega}^{[k,l]})=c\cdot (l+1)\hat{\omega}^{[k,l+1]}\otimes \omega
		\]
		for some $c\in \Zp$.
	\end{lem}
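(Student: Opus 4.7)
The plan is to compute $\nabla := \nabla_\dagger^{(1)}$ on the three generators $\hat{\omega}^{[0,0]}$, $\hat{\omega}^{[1,0]}$ and $\hat{\omega}^{[0,1]}$ of $\widehat{\Lcal}^\dagger_1$, and then extend to $\TSym^n \widehat{\Lcal}^\dagger_1$ by the Leibniz rule together with the divided-power derivation identity $\nabla(x^{[m]}) = x^{[m-1]} \cdot \nabla(x)$ and the product relation $y \cdot y^{[l]} = (l+1)\, y^{[l+1]}$ in the tensor-symmetric algebra. Since the short exact sequence \eqref{eq_ses1} is horizontal when its outer terms carry their trivial pullback connections, the sections $[\omega]$ and $[u]$ are horizontal inside $\widehat{\Lcal}^\dagger_1$, so $\nabla(\hat{\omega}^{[1,0]}) = \nabla(\hat{\omega}^{[0,1]}) = 0$. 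Horizontality of the quotient map further forces $\nabla(\hat{\omega}^{[0,0]}) \in \Hcal_{\Eftriv} \otimes \Omega^1_{\Eftriv/\Mtriv}$, so we may write $\nabla(\hat{\omega}^{[0,0]}) = (\alpha [\omega] + \beta [u]) \otimes \omega$ with $\alpha, \beta \in \Ocal_{\Eftriv}$.

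To pin down $\alpha$ and $\beta$, I invoke the horizontality \eqref{eq_Phidagger_Ln} of the Frobenius structure $\Psi\colon \widehat{\Lcal}^\dagger_1 \righteq \phi_{\Eftriv}^*\widehat{\Lcal}^\dagger_1$ combined with the explicit formulas of the previous lemma and the identity $\phi_{\Eftriv}^*\omega = p\omega$, which holds because $\phi_{\Eftriv}$ acts as $(1+T) \mapsto (1+T)^p$ under the trivialization. Applying $\nabla \circ \Psi = \Psi \circ \nabla$ to $\hat{\omega}^{[0,0]}$ and comparing the coefficients of $\phi_{\Eftriv}^*[\omega] \otimes \omega$ and $\phi_{\Eftriv}^*[u] \otimes \omega$ yields
\[
\alpha = p\, \phi_{\Eftriv}^* \alpha, \qquad \beta = \phi_{\Eftriv}^* \beta.
\]
Iterating the first equation forces $\alpha \in \bigcap_k p^k \Ocal_{\Eftriv} = 0$ by $p$-adic separatedness. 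For the second, expanding $\beta = \sum_m b_m T^m$ in the trivialization coordinate and using $\phi_{\Eftriv}^* T = (1+T)^p - 1 \equiv pT \pmod{T^2}$ shows inductively that $b_m = 0$ for $m \geq 1$, while $b_0 \in V_p$ is Frobenius-invariant. By Katz's identification $V_p^{\Frob = 1} = \Zp$, we conclude $\beta = c$ for some $c \in \Zp$.

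Finally, writing $\hat{\omega}^{[k,l]} = (\hat{\omega}^{[0,0]})^{[n-k-l]} \cdot [\omega]^{[k]} \cdot [u]^{[l]}$ and using horizontality of the factors $[\omega]^{[k]}$ and $[u]^{[l]}$ together with the divided-power derivation rule, we compute
\[
\nabla_{(n)}(\hat{\omega}^{[k,l]}) = c \cdot (\hat{\omega}^{[0,0]})^{[n-k-l-1]} \cdot [\omega]^{[k]} \cdot \bigl([u] \cdot [u]^{[l]}\bigr) \otimes \omega = c(l+1)\hat{\omega}^{[k,l+1]} \otimes \omega,
\]
as claimed. The main obstacle is showing that $c$ is a genuine $\Zp$-constant rather than an arbitrary function on $\Eftriv$ or $\Mtriv$; this is precisely where the $p$-adic Frobenius structure and the rigidity result $V_p^{\Frob=1} = \Zp$ enter in an essential way.
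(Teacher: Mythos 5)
Your proposal is correct and follows essentially the same route as the paper's proof: horizontality of the sequence \eqref{eq_ses1} kills $\nabla(\hat{\omega}^{[1,0]})$ and $\nabla(\hat{\omega}^{[0,1]})$, horizontality of the Frobenius structure $\Psi$ yields the functional equations $\alpha = p\,\phi_{\Eftriv}^*\alpha$ and $\beta = \phi_{\Eftriv}^*\beta$ (the paper's $f_{1,0}$ and $f_{0,1}$), and the general case follows from the tensor-product connection on $\TSym^n\widehat{\Lcal}^\dagger_1$. The only difference is that you spell out the deduction $\beta\in\Zp$ (power-series expansion in $T$ plus $V_p^{\Frob=1}=\Zp$), which the paper simply asserts.
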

	\begin{proof}
	The connection on $\TSym^n \widehat{\Lcal}^\dagger_{1} $ is the tensor product connection induced from $\widehat{\Lcal}^\dagger_{1}$. Thus, it is enough to prove the formula in the case $n=1$. Here, $\TSym^1\widehat{\Lcal}^\dagger_{1}=\widehat{\Lcal}^\dagger_{1} $ and $\con{(1)}=\nabla_{\dagger}^{(1)}$. By the horizontality of \eqref{eq_ses1}, the $\Mtriv$-connection is trivial on $\Hcal_{\Eftriv}$, i.e. $\nabla^{(1)}_\dagger(\hat{\omega}^{[1,0]})=\nabla^{(1)}_\dagger(\hat{\omega}^{[0,1]})=0$. It remains to determine $\nabla^{(1)}_\dagger(\hat{\omega}^{[0,0]})$. We will use the horizontality of 
	\[
		\Psi\colon \widehat{\Lcal}^\dagger_{1} \rightarrow \phi_{\Eftriv}^* \widehat{\Lcal}^\dagger_{1}
	\]	
	 to determine $\nabla^{(1)}_\dagger(\hat{\omega}^{[0,0]})$: In terms of the basis $\hat{\omega}^{[*,*]}$ we have
	\begin{align*}
	 	\Psi(\hat{\omega}^{[0,0]})&=\phi_{\Eftriv}^*\hat{\omega}^{[0,0]}\\
	 	\Psi(\hat{\omega}^{[1,0]})&=\phi_{\Eftriv}^*\hat{\omega}^{[1,0]}\\
	 	\Psi(\hat{\omega}^{[0,1]})&=p\cdot\phi_{\Eftriv}^*\hat{\omega}^{[0,1]}.
	 \end{align*}
	We already know that $$\nabla^{(1)}_\dagger(\hat{\omega}^{[0,0]})=f_{1,0}\cdot \hat{\omega}^{[1,0]} + f_{0,1}\cdot \hat{\omega}^{[0,1]}$$ for suitable $f_{0,1},f_{1,0}\in\Gamma(\Eftriv,\Ocal_{\Eftriv})$. Now, the horizontality of \eqref{eq_Phidagger} expresses as the explicit formula
	\[
		f_{1,0}\cdot \phi_{\Eftriv}^*\hat{\omega}^{[1,0]} + p\cdot f_{0,1}\cdot \phi_{\Eftriv}^*\hat{\omega}^{[0,1]}=p\cdot \phi_{\Eftriv}^*\left(f_{1,0}\hat{\omega}^{[1,0]}\right)+p\cdot \phi_{\Eftriv}^*\left(f_{0,1}\hat{\omega}^{[0,1]}\right).
	\]
	 i.e. $f_{1,0}=p\cdot \phi_{\Eftriv}^*f_{1,0}$ and $f_{0,1}=\phi_{\Eftriv}^*f_{0,1}$. In particular, we get $f_{1,0}=0$ and $f_{0,1}\in\Zp$ which proves the claim.
	\end{proof}
	\begin{rem}
		With a little bit more effort, one can indeed prove that $c=1$ in the above lemma. Since the proof needs a more careful study of the (formal) logarithm sheaves, we  will not present it here. The full proof will be given in the upcoming work \cite{Syntomic}.
	\end{rem}
	By restricting sections of the Poincar\'e bundle $\Po$ to $\Ef\times_S\Edf$ we obtain sections of $\hat{\Lcal}_n:=\pr_*(\Po|_{\Ef\times_S\Edf})$. We would like to give an explicit description of such elements in terms of the basis $(\hat{\omega}^{[k]})_{k=0}^n$ of $\TSym^n\hat{\Lcal}_n$.
	\begin{lem}
		Let $s\in\Gamma(\Eftriv\times\Edftriv,\Pof)$ and write $\vartheta\in\Gamma(\Eftriv\times\Edftriv,\Ocal_{\Eftriv\times\Edftriv})$ for the associated section obtained by the trivialization. Then
		\[
			s\Big|_{\Eftriv\times \Inf^n_e \Edtriv}\mapsto \left( (\id\times e)^*\left(\partial_{\Edftriv}^{\circ k}\vartheta\right)\cdot \omega^{[k]} \right)_{k=0}^n
		\]
		under
		\[
			\Gamma(\Eftriv\times\Inf^n_e \Edtriv,\Pof)=\Gamma(\Eftriv,\widehat{\Lcal}^\dagger_{n})\rightarrow \bigoplus_{k=0}^n \Gamma(\Eftriv,\TSym^k\om_{\Eftriv}).
		\]
		Here, $\partial_{\Edftriv}\colon\Ocal_{\Edftriv}\rightarrow \Ocal_{\Edftriv} $ denotes the canonical invariant derivation.
	\end{lem}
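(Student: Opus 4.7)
The plan is to chase $s$ through the trivialization and the comultiplication, reducing the claim to the identification of the formal-group comultiplication with iterated invariant differentiation that was already recorded in the paper. First, by the very definition $\vartheta = \triv(s)$, the trivialization $\triv\colon \Pof \righteq \Ocal_{\Eftriv}\hat{\otimes}\Ocal_{\Edftriv}$ sends $s$ to $\vartheta$. Because $\triv$ was constructed level by level as the limit of compatible isomorphisms $\triv_n$ on $C_n\times D_n$, it is compatible with further restriction to $\Eftriv\times\Inf^n_e \Edtriv$. Hence the induced trivialization $\triv^{(n)}\colon \widehat{\Lcal}_n \righteq \Ocal_{\Eftriv}\otimes_{\Ocal_S}\Ocal_{\Inf^n_e\Edtriv}$ sends $s|_{\Eftriv\times \Inf^n_e\Edtriv}$ to the image of $\vartheta$ modulo the $(n{+}1)$-th power of the augmentation ideal on the $\Edftriv$-factor.

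Next I unwind the map $\widehat{\Lcal}_n\hookrightarrow \TSym^n \widehat{\Lcal}_1$. By the commutative square displayed in the paper relating the comultiplication on $\widehat{\Lcal}_n$ to the comultiplication on $\Ocal_{\Inf^n_e \Edtriv}$, this inclusion becomes, under $\triv^{(n)}$ and $\TSym^n(\triv^{(1)})$, the $\Ocal_{\Eftriv}$-linear extension $\id_{\Ocal_{\Eftriv}}\otimes \Delta$ of the group-theoretic comultiplication $\Delta\colon \Ocal_{\Inf^n_e\Edtriv}\to \TSym^n\Ocal_{\Inf^1_e\Edtriv}$. The paper's explicit description identifies the target with $\bigoplus_{k=0}^n \TSym^k\om_{\Edtriv/\Mtriv}$ and $\Delta$ with the map $f\mapsto (e^*(\partial^{\circ k}f))_{k=0}^n$, where $\partial\colon \Ocal_{\Edf}\to \om_{\Ed/S}\otimes\Ocal_{\Edf}$ is the $\om$-valued invariant derivative.

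Applying $\id_{\Ocal_{\Eftriv}}\otimes \Delta$ to the element $\vartheta \bmod \mathfrak{m}_e^{n+1}$ yields, in the $k$-th summand $\Ocal_{\Eftriv}\otimes\TSym^k\om_{\Edtriv/\Mtriv}$, the section $(\id\times e)^*(\partial^{\circ k}\vartheta)$. Using that $\TSym^k\om_{\Eftriv}$ is a free $\Ocal_{\Eftriv}$-module generated by the divided power $\omega^{[k]}$, and that the scalar invariant derivation $\partial_{\Edftriv}\colon\Ocal_{\Edftriv}\to\Ocal_{\Edftriv}$ of the statement is characterised by $\partial f = \omega\cdot \partial_{\Edftriv} f$, iteration gives $\partial^{\circ k}\vartheta = \omega^{[k]}\cdot \partial_{\Edftriv}^{\circ k}\vartheta$. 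Hence the $k$-th coordinate equals $(\id\times e)^*(\partial_{\Edftriv}^{\circ k}\vartheta)\cdot \omega^{[k]}$, as claimed. (Here the identification $\om_{\Edtriv/\Mtriv}\cong \om_{\Eftriv}$ is the one induced by the chosen auto-duality $E\righteq \Ed$ implicit throughout the section.)

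The argument is pure bookkeeping; the only point requiring care is the reconciliation of the two conventions for the invariant derivation — the $\om$-valued $\partial$ used in the formal-group description of the comultiplication versus the scalar-valued $\partial_{\Edftriv}$ appearing in the statement — and tracking the generator $\omega^{[k]}$ through the isomorphism $\TSym^n(\Ocal_{\Eftriv}\oplus \om_{\Eftriv})\cong \bigoplus_{k=0}^n \TSym^k\om_{\Eftriv}$. Once these identifications are fixed, the claim is immediate from the two compatibility statements recalled above.
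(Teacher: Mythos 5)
Your proposal is correct and follows essentially the same route as the paper: the paper's proof is a two-line appeal to the compatibility square \eqref{eq_comult} together with the identification of the formal-group comultiplication $\Ocal_{\Inf^n_e \Etriv^\vee}\rightarrow \bigoplus_{k=0}^n \TSym^k \om_{\Edtriv/\Mtriv}$ with $f\mapsto (e^*(\partial_{\Edftriv}^{\circ k} f)\cdot\omega^{[k]})_{k=0}^n$, which is exactly the reduction you perform. Your additional care in reconciling the $\om$-valued and scalar-valued conventions for the invariant derivation is sound bookkeeping that the paper leaves implicit.
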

	\begin{proof}
		This follows from \eqref{eq_comult} and the fact that
		\[
			\Ocal_{\Inf^n_e \Etriv^\vee}\rightarrow \TSym^n_{\Ocal_{\Mtriv}}\left( \Ocal_{\Inf^1_e \Etriv^\vee}\right)=\bigoplus_{k=0}^n \TSym^k \om_{\Edtriv/\Etriv}.
		\]
		maps $f\in\Ocal_{\Inf^n_e \Etriv^\vee}$ to $(e^*\partial_{\Edftriv}^{\circ k} f \cdot\omega^{[k]})_{k=0}^n$.
	\end{proof}

\section[Proof of the Theorem]{Proof of the Theorem}\label{sec_proofpadic}

	 \begin{proof}[Proof of \cref{thm_padictheta}]
	 The Poincar\'e bundle $\Po^{\sharp,\dagger}$ is equipped with two integrable connections $\con{\sharp}$ and $\con{\dagger}$. Let us write $\Pof^{\sharp,\dagger}$ for the restriction of $\Po^{\sharp,\dagger}$ to the formal completion of $\Etriv^\sharp\times_\Mtriv \Etriv^\dagger$ along the the zero section. The unit root decomposition and the canonical generator of $\om_{\Edtriv/\Mtriv}$ induce a projection:
	 \[
	 	\HdR{1}{\Edtriv/\Mtriv}\twoheadrightarrow \om_{\Edtriv/\Mtriv}\righteq \Ocal_\Mtriv.
	 \] 
	 With this identification we obtain differential operators
	 \[
	 	\con{\sharp}: \Pof^{\sharp,\dagger}\rightarrow \Pof^{\sharp,\dagger}\otimes_{\Ocal_\Mtriv} \HdR{1}{\Edtriv/\Mtriv} \twoheadrightarrow \Pof^{\sharp,\dagger}
	 \]
	 and similarly
	 \[
	 	\con{\dagger}: \Pof^{\sharp,\dagger}\rightarrow \Pof^{\sharp,\dagger}\otimes_{\Ocal_\Mtriv} \HdR{1}{\Etriv/\Mtriv} \twoheadrightarrow \Pof^{\sharp,\dagger}.
	 \]
	 Tensoring the trivialization $\Pof\righteq \Ocal_{\Eftriv\times\Edftriv}$ with $\Ocal_{\Eftriv^\sharp\times\Eftriv^\dagger}$ gives a trivialization $\Pof^{\sharp,\dagger}\righteq \Ocal_{\Eftriv^\sharp\times\Eftriv^\dagger}$. Further, the unit root decomposition induces canonical projections $\Ocal_{\Eftriv^\sharp}\twoheadrightarrow \Ocal_{\Eftriv}$ and $\Ocal_{\Eftriv^\dagger}\twoheadrightarrow \Ocal_{\Eftriv^\vee}$. By the definition of the geometric nearly holomorphic modular forms $E^{k,r+1}_{s,t}$ it suffices to prove the commutativity of the following diagrams
	 \[
	 	\begin{tikzcd}
	 		\Pof^{\sharp,\dagger}\ar[d,"\cong"] \ar[r,"\con{\sharp}"] & \Pof^{\sharp,\dagger}\ar[d,"\cong"]\\
	 		\Ocal_{\Eftriv^\sharp}\hat{\otimes}_{\Ocal_\Mtriv}\Ocal_{\Eftriv^\dagger}\ar[d,two heads] & \Ocal_{\Eftriv^\sharp}\hat{\otimes}_{\Ocal_\Mtriv}\Ocal_{\Eftriv^\dagger}\ar[d,two heads]\\
	 		\Ocal_{\Eftriv}\hat{\otimes}_{\Ocal_\Mtriv}\Ocal_{\Eftriv^\vee} \ar[r,"\id\otimes \partial_{\Eftriv^\vee}"] & \Ocal_{\Eftriv}\hat{\otimes}_{\Ocal_\Mtriv}\Ocal_{\Eftriv^\vee}
	 	\end{tikzcd}
	 	\quad
	 	\begin{tikzcd}
	 		\Pof^{\sharp,\dagger}\ar[d,"\cong"] \ar[r,"\con{\dagger}"] & \Pof^{\sharp,\dagger}\ar[d,"\cong"]\\
	 		\Ocal_{\Eftriv^\sharp}\hat{\otimes}_{\Ocal_\Mtriv}\Ocal_{\Eftriv^\dagger}\ar[d,two heads] & \Ocal_{\Eftriv^\sharp}\hat{\otimes}_{\Ocal_\Mtriv}\Ocal_{\Eftriv^\dagger}\ar[d,two heads]\\
	 		\Ocal_{\Eftriv}\hat{\otimes}_{\Ocal_\Mtriv}\Ocal_{\Eftriv^\vee} \ar[r,"\partial_{\Eftriv}\otimes \id"] & \Ocal_{\Eftriv}\hat{\otimes}_{\Ocal_\Mtriv}\Ocal_{\Eftriv^\vee}
	 	\end{tikzcd}
	 \]
	 We prove the commutativity of the right diagram, the other case is completely symmetric. Recall that $\Po^{\sharp,\dagger}$ with the connection $\con{\dagger}$ is obtained by pullback of the universal connection $\con{\Po^\dagger}$ on the Poincar\'e bundle $\Po^\dagger$ along the projection $\Etriv^\sharp\times_\Mtriv \Etriv^\dagger\rightarrow \Etriv\times_\Mtriv \Etriv^\dagger$. Thus we can deduce the commutativity of the right diagram from the commutativity of
	 \begin{equation*}
	 	 \begin{tikzcd}
	 		\Pof^{\dagger}\ar[d,"\cong"] \ar[r,"\con{\dagger}"] & \Pof^{\dagger}\ar[d,"\cong"]\\
	 		\Ocal_{\Eftriv}\hat{\otimes}_{\Ocal_\Mtriv}\Ocal_{\Eftriv^\dagger}\ar[d,two heads] & \Ocal_{\Eftriv}\hat{\otimes}_{\Ocal_\Mtriv}\Ocal_{\Eftriv^\dagger}\ar[d,two heads]\\
	 		\Ocal_{\Eftriv}\hat{\otimes}_{\Ocal_\Mtriv}\Ocal_{\Eftriv^\vee} \ar[r,"\partial_{\Eftriv}\otimes \id"] & \Ocal_{\Eftriv}\hat{\otimes}_{\Ocal_\Mtriv}\Ocal_{\Eftriv^\vee}.
	 	\end{tikzcd}
	 \end{equation*}
	 It is enough to prove the commutativity of this diagram restricted to $\Eftriv\times_\Mtriv \Inf^n_e \Eftriv^\dagger$ for all $n\geq 1$, i.e. we have to prove for all $n\geq 1$ the commutativity of
	 \begin{equation}\label{diag_Pof_n}
	 	 \begin{tikzcd}
	 		\Lnf^\dagger\ar[d,"\cong"] \ar[r,"\nabla^{(n)}_\dagger"] & \Lnf^\dagger\ar[d,"\cong"]\\
	 		\Ocal_{\Eftriv}\hat{\otimes}_{\Ocal_\Mtriv}\Ocal_{\Inf^n_e \Etriv^\dagger}\ar[d,two heads] & \Ocal_{\Eftriv}\hat{\otimes}_{\Ocal_\Mtriv}\Ocal_{\Inf^n_e \Etriv^\dagger}\ar[d,two heads]\\
	 		\Ocal_{\Eftriv}\hat{\otimes}_{\Ocal_\Mtriv}\Ocal_{\Inf^n_e \Etriv^\vee} \ar[r,"\partial_{\Eftriv}\otimes \id"] & \Ocal_{\Eftriv}\hat{\otimes}_{\Ocal_\Mtriv}\Ocal_{\Inf^n_e \Etriv^\vee}.
	 	\end{tikzcd}
	 \end{equation}	 

	For the commutativity of the diagram \eqref{diag_Pof_n} let us consider the following diagram:
	 \[
	 \begin{tikzcd}[row sep=tiny, column sep=tiny]
	 	\Lnf^\dagger\ar[rr,"\nabla_{\dagger}^{(n)}"]\ar[dd,"\triv^{(n)}"]\ar[rd,hook] & & \Lnf^\dagger\ar[rd,hook]\ar[dd] & \\
	 	& \TSym^n_{\Ocal_{\Eftriv}} \widehat{\Lcal}_1^\dagger \ar[crossing over]{rr}[near start]{\con{(n)}}\ar[dd,"\triv^{(1)}"] & & \TSym^n_{\Ocal_{\Eftriv}} \widehat{\Lcal}_1^\dagger\ar[dd] \\
	 	\Ocal_{\Eftriv}\otimes_{\Ocal_\Mtriv}\Ocal_{\Inf^n_e \Etriv^\dagger}\ar[dd] & & \Ocal_{\Eftriv}\otimes_{\Ocal_\Mtriv}\Ocal_{\Inf^n_e \Etriv^\dagger}\ar[dd] & \\
	 	& \TSym^n_{\Ocal_{\Eftriv}} \left(\Ocal_{\Eftriv}\otimes_{\Ocal_\Mtriv}\Ocal_{\Inf^1_e \Etriv^\dagger}\right) && \TSym^n_{\Ocal_{\Eftriv}} \left(\Ocal_{\Eftriv}\otimes_{\Ocal_\Mtriv}\Ocal_{\Inf^1_e \Etriv^\dagger}\right)\ar[dd]\\
	 	\Ocal_{\Eftriv}\otimes_{\Ocal_\Mtriv}\Ocal_{\Inf^n_e \Etriv^\vee}\ar[rr]\ar[rd,hook] & & \Ocal_{\Eftriv}\otimes_{\Ocal_\Mtriv}\Ocal_{\Inf^n_e \Etriv^\vee}\ar[rd,hook]& \\
	 	& \TSym^n_{\Ocal_{\Eftriv}}\left( \Ocal_{\Eftriv}\otimes_{\Ocal_\Mtriv}\Ocal_{\Inf^1_e \Etriv^\vee}\right)\ar[crossing over, leftarrow, near start]{uu}{}\ar[rr] & & \TSym^n_{\Ocal_{\Eftriv}}\left( \Ocal_{\Eftriv}\otimes_{\Ocal_\Mtriv}\Ocal_{\Inf^1_e \Etriv^\vee}\right)
	 \end{tikzcd}
	 \]
	 In this diagram, the commutativity of the left and the right face is just the compatibility of the co-multiplication maps. The commutativity of the lower face is obvious. The upper face commutes by the horizontality of the co-multiplication maps. The commutativity of the front face follows from the explicit formula of $\con{(n)}$. Now, the commutativity of the back face is deduced from the commutativity of the other faces and the injectivity of the comultiplication maps.
	 \end{proof}


\section[$p$-adic interpolation of $p$-adic Eisenstein--Kronecker series]{p-adic interpolation of p-adic Eisenstein--Kronecker series}
As always when one has a $p$-adic measure $\mu$ on $\Zp$ it is only possible to define the moment function
	\[
		\Zp\ni s\mapsto \int_{\Zp^\times} \langle x\rangle^s \dd \mu(x)
	\]
	with $\langle\cdot\rangle\colon \Zp^\times\twoheadrightarrow (1+p\Zp)$ after restriction to $\Zp^\times$. Let us consider again the universal trivialized elliptic curve $(\Etriv/\Mtriv,\beta,\alpha_N)$ with $\Gamma(N)$-level structure. For $e\neq s\in \Etriv[N](\Mtriv)$ corresponding to $(a,b)$ via the level structure we have defined the $p$-adic measure $\muEisD$. Let us write $\muEisDp:=\muEisD|_{\Zp^\times\times \Zp} $ for the restriction of the measure $\muEisDp$ to $\Zp^\times\times \Zp$. It will be convenient to view $\muEisDp$ as a measure on $\Zp\times\Zp$ by extending by zero. We can easily deduce the following statement from Katz \cite[\S 6.3]{katz_padicinterpol} and \cref{PI_propKatz}. Katz deduces this result by comparing the $q$-expansions of the moments. In this section we sketch a different proof using the geometry of the Poincar\'e bundle. \par 
	\begin{thm}
	\[
		\int_{\Zp^\times\times \Zp} f(x,y)\dd \muEisD=\int_{\Zp\times \Zp} f(x,y)\dd \muEisD-\Frob \int_{\Zp\times \Zp} f(p\cdot x,y)\dd \muEisD.
	\]
	\end{thm}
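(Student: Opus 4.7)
Since $\int_{\Zp^{\times}\times\Zp}=\int_{\Zp\times\Zp}-\int_{p\Zp\times\Zp}$, the stated identity is equivalent to
\[
\int_{p\Zp\times\Zp} f(x,y)\,\dd\muEisD(x,y) \;=\; \Frob\!\left(\int_{\Zp\times\Zp} f(px,y)\,\dd\muEisD(x,y)\right).
\]
Both sides are continuous in $f$ with respect to the sup-norm, and by Mahler's theorem polynomial functions are uniformly dense in $C(\Zp^{2},\Zp)$, so it suffices to verify the identity on monomials $f(x,y)=x^{k}y^{r}$. By \cref{cor_Eis_moments} the right-hand side equals $p^{k}\Frob(\EispD)$, while the identity $\mathbf{1}_{p\Zp}(x)=\frac{1}{p}\sum_{\zeta\in\mu_{p}}\zeta^{x}$ combined with Amice's description $F(S,T)=\int(1+S)^{x}(1+T)^{y}\,\dd\mu$ identifies the power series associated to $\muEisD|_{p\Zp\times\Zp}$ as
\[
F^{(p)}(S,T):=\frac{1}{p}\sum_{\zeta\in\mu_{p}}\pthetaD_{(a,b)}\bigl(\zeta(1+S)-1,\,T\bigr).
\]
Comparing coefficients, the theorem is therefore reduced to the power series identity
\[
F^{(p)}(S,T)\;=\;\Frob\bigl(\pthetaD_{(a,b)}\bigl((1+S)^{p}-1,\,T\bigr)\bigr)\qquad(\star)
\]
in $\VpN\llbracket S,T\rrbracket$, with $\Frob$ acting coefficient-wise on $\VpN$.

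The identity $(\star)$ is the geometric content of the Frobenius structure $\Psi\colon\Lnf\righteq\phi_{\Eftriv}^{*}\Lnf$ constructed in the previous section. Under the trivialization $\beta$, the endomorphism $\phi_{\Eftriv}=\widetilde{\mathrm{Frob}}\circ\varphi|_{\Eftriv}$ decomposes into the $p$-power isogeny $T\mapsto(1+T)^{p}-1$ of $\Gmf{\Mtriv}$ coming from $\varphi$, followed by the Frobenius twist on the base $\Mtriv$ coming from $\widetilde{\mathrm{Frob}}$; hence the right-hand side of $(\star)$ is the trivialization of $\phi_{\Eftriv}^{*}\pthetaD_{(a,b)}$. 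On the other hand, the sum $\sum_{\zeta\in\mu_{p}}$ is averaging along translates by the canonical subgroup $\hat{C}_{1}\cong\mu_{p}\subseteq\Eftriv$, which by the biextension structure of the Poincar\'e bundle descends sections from $\Eftriv$ to $\widehat{\Etriv'}$ via the isomorphism $\gamma_{\id,\varphi^{\vee}}$ of \eqref{eq_Phi1}. Both sides of $(\star)$ therefore express the same descent of the section $\sum_{e\neq t\in\Eftriv[D]}U_{s,t}^{N,D}(\scan)$ along the isogeny $\varphi$, so they must coincide.

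In practice I would verify $(\star)$ by working at each infinitesimal level $\Lnf^{\dagger}$, where the trivialized section can be expanded in the basis $(\hat{\omega}^{[k,l]})_{k+l\leq n}$ and the action of $\Psi$ is explicit via the computation $\Psi(\hat{\omega}^{[k,l]})=p^{l}\phi_{\Eftriv}^{*}\hat{\omega}^{[k,l]}$ of the previous section. Passing to the limit and using \cref{lem_TrivializeLn} (injectivity of the comultiplication map after inverting $p$) then recovers the full identity on $\Pof$. The main obstacle is the careful book-keeping of normalizations: the factor $\tfrac{1}{p}$ that separates $\mu_{p}$-averaging from $\mu_{p}$-summation, the non-equivariance of $\beta$ under $\phi_{\Eftriv}$ which produces the coefficient-wise $\Frob$-twist visible on the right of $(\star)$, and the explicit identification $\hat{C}_{1}\cong\mu_{p}\subseteq\Gmf{\Mtriv}$ under $\beta$ together with the compatibility \eqref{eq_Phi2} of the Poincar\'e bundle with the Cartesian square \eqref{PI_eq7}. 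Once these are tracked, the identity is forced by the uniqueness of $\gamma_{\id,\varphi^{\vee}}$ among isomorphisms of rigidified line bundles, which is the same universal argument used throughout Part~I.
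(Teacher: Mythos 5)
Your reduction is sound and follows the same route as the paper: splitting $\int_{\Zp^\times\times\Zp}=\int_{\Zp\times\Zp}-\int_{p\Zp\times\Zp}$, computing the Amice transform of $\muEisD|_{p\Zp\times\Zp}$ as the $\mu_p$-average, and reducing everything to the power-series identity $(\star)$, which is precisely the formula
\[
p\cdot\Frob\bigl(\pthetaD([p](S),T)\bigr)=\sum_{\zeta\in \Gmf{S}[p]} \pthetaD(S+_{\Gmf{S}}\zeta,T)
\]
on which the paper's sketch also rests. Up to that point the two arguments coincide.

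The gap is in your justification of $(\star)$. You attribute it to the Frobenius structure $\Psi\colon\Lnf\righteq\phi_{\Eftriv}^*\Lnf$ and to a descent heuristic (``both sides express the same descent of the section along $\varphi$, so they must coincide''), but $\Psi$ is only an isomorphism of sheaves built from $\gamma_{\id,\varphi^\vee}$ and base change; it says nothing about what happens to the \emph{particular} section $\sum_{e\neq t}U^{N,D}_{s,t}(\scan)$ under summation over translates by the canonical subgroup. The trace of this section along $\varphi$ does land in a bundle that descends to $\widehat{\Etriv'}$, but identifying the descended section with ($p$ times the Frobenius twist of) the corresponding canonical section of $E'$ is exactly the distribution relation for the Kronecker section --- in the paper, \cref{ch_EP_cor_dist2}, a special case of \cref{ch_EP_thmdist}, whose proof occupies the appendix and requires genuine input (residue computations for the forms $\omega^\psi_t$, the translation identities of \cref{ch_EP_lem_Ustomega}, and density of torsion sections); it is not forced by the uniqueness of isomorphisms of rigidified line bundles, since the objects being compared are meromorphic sections with prescribed poles, not rigidified isomorphisms. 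Likewise, your proposed infinitesimal verification via $\Psi(\hat{\omega}^{[k,l]})=p^l\phi_{\Eftriv}^*\hat{\omega}^{[k,l]}$ computes how the \emph{basis} transforms, not how the theta function does; that formula is the input to \cref{thm_padictheta}, not to the present interpolation statement. To close the argument you must invoke (or reprove) the distribution relation for the isogeny $\varphi\colon \Etriv\rightarrow\Etriv/C$ and then translate it through the trivialization into the identity $(\star)$.
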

	
	\begin{proof}[Sketch of the proof:] In the \cref{appendix_dist} we prove a distribution relation for the Kronecker section. The distribution relation for the isogeny $\varphi\colon E\rightarrow E'$ implies the formula
	\[
		p\cdot\Frob(\pthetaD([p](S),T))=\sum_{\zeta\in \Gmf{S}[p]} \pthetaD(S+_{\Gmf{S}}\zeta,T).
	\]
	Thus we get
	\begin{equation}\label{eq_Theta1}
		\pthetaD(S,T)-\frac{1}{p}\sum_{\zeta\in \Gmf{S}[p]} \pthetaD(S+_{\Gmf{S}}\zeta,T)=\pthetaD(S,T)-\Frob(\pthetaD([p](S),T)).
	\end{equation}
	It is well known, that the Amice transform of $f(S)-\frac{1}{p}\sum_{\zeta\in \hat{\mathbb{G}}_m[p]} f(S+_{\hat{\mathbb{G}}_m}\zeta)$ is the measure $\mu_f|_{\Zp^\times}$. In particular, the left hand side is the Amice transform of $\muEisDp$. Now the statement follows from \eqref{eq_Theta1}, since the inverse of the Amice transform can be computed by
	\[
		f(T)=\int_{\Zp}(1+T)^x\dd \mu_f(x).
	\]
	\end{proof}

\appendix

\section{The distribution relation}\label{appendix_dist}
The Kronecker theta function is known to satisfy a distribution relation \cite{bannai_kobayashi}. The aim of this appendix is to prove a similar distribution relation for the underlying Kronecker section. In order to state the general distribution relation we need a further generalization of the translation operators. Let us consider the following commutative diagram
		\begin{equation}\label{diag1}
			\begin{tikzcd}
				E\ar[r,"{[D']}"]\ar[d,"\psi"] & E \ar[d,"\psi"] \\
				E'\ar[r,"{[D']}"] & E'
			\end{tikzcd}
		\end{equation}
of isogenies of elliptic curves. The most important case is the case $E=E'$. Let us write $\gamma_{[D'],\psi^\vee}$ for the diagonal of the commutative diagram
\[
			\begin{tikzcd}[column sep=huge, row sep=huge]
				([D']\times\psi^\vee)^*\Po \ar[rd,"\gamma_{[D'],\psi^\vee}"]\ar[r,"({[D']}\times\id)^*\gamma_{\id,\psi^\vee}"]\ar[d,swap,"(\id\times\psi^\vee)^*\gamma_{[D'],\id}"] & (\psi\circ[D']\times\id)^*\Po' \ar[d,"(\psi\times\id)^*\gamma_{[D'],\id}"] \\
				(\id\times (\psi\circ[D'])^\vee)^*\Po' \ar[r,swap,"(\id\times{[D']})^*\gamma_{\id,\psi^\vee}"] & (\psi\times [D'])^*\Po'.
			\end{tikzcd}
\]
and $\gamma_{\psi,[D']}$ for its inverse.
\begin{defin}
	Let $s\in (\ker\psi)(S)$ and $t\in(E^{\prime,\vee}[D'])(S)$. The translation operator
	\[
		\Ucal_{s,t}^{\psi,[D']}\colon (T_s\times T_t)^*([D']\times\psi^\vee)^*\Po \rightarrow ([D']\times\psi^\vee)^*\Po
	\]
	is defined as the composition $\Ucal_{s,t}^{\psi,[D']}:=\gamma_{\psi,[D']}\circ(T_s\times T_t)^*\gamma_{[D'],\psi^\vee}$.
\end{defin}
As before let us write
\[
	U_{s,t}^{\psi,[D']}(f):=(\Ucal_{s,t}^{\psi,[D']}\otimes\id)\left( (T_s\times T_t)^*([D']\times\psi^\vee)^*f \right)
\]
for sections  $f\in\Gamma\left(E\times E^\vee, \Po\otimes \Omega^1_{E\times E^\vee/E^\vee}([E\times e]+[e\times E^\vee]) \right)$.
	Now, we can state the distribution relation which is motivated from the theta function distribution relation given in \cite[Proposition 1.16]{bannai_kobayashi}.
	\begin{thm}\label{ch_EP_thmdist}Let $E$ and $E'$ be elliptic curves fitting into a commutative diagram as \eqref{diag1}. Let $N,D$ and $D'$ be integers and let us assume that $N,D,D'$ and $\ker\psi$ are non-zero-divisors on $S$. Let us further assume that all torsion sections of the finite group schemes $\ker\psi$ and $E'^\vee[D']$ are already defined over $S$, i.e.
\[
	|(\ker\psi)(S)|=\deg\psi,\quad |(E'^\vee[D'])(S)|=(D')^2.
\]	
Then, for $t\in E'^\vee[D](S)$, $s\in E[N](S)$:
	\begin{align*}
		&\sum_{\substack{\alpha\in (\ker\psi)(S),\\ \beta\in (E'^\vee[D'])(S)}} U^{[N]\circ \psi,~[D]\circ [D']}_{s+\alpha,~t+\beta}(\scan )=\\
		=&((D')^2)\cdot \left( ([D]\times[N])^*\gamma_{\psi,[D']} \right)\left((\psi\times [D'])^*U^{N,D}_{\psi(s),[D'](t)}(s_{\mathrm{can},E'})\right)
	\end{align*}
	\end{thm}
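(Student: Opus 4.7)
The strategy parallels the proofs of \cref{ch_EP_lemTrace} and \cref{ch_EP_lemTrace2}: realize both sides as sections of a common sheaf, verify that the difference has no residues via a character-sum computation, and then conclude by translation invariance combined with the non-zero-divisor hypothesis.

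First, I would identify the common ambient sheaf. Both sides should live in
\[
	([D]\times[N])^*\bigl[(\psi\times[D'])^*\Po'\otimes\Omega^1_{E\times E^\vee/E^\vee}(D_{\mathrm{pole}})\bigr]
\]
for an appropriate divisor $D_{\mathrm{pole}}$ of translates of the zero-section loci. For the RHS this is direct. For the LHS, I would decompose the operator $U^{[N]\circ\psi,\,[D]\circ[D']}_{s+\alpha,\,t+\beta}$ into an ``outer'' piece involving $U^{N,D}_{\psi(s),[D'](t)}$ and an ``inner'' piece $\Ucal^{\psi,[D']}_{\alpha,\beta}$, using a composition diagram analogous to \eqref{EP_diag1} with $\psi$ and $[D']$ in place of the multiplication maps; the commutativity of such a diagram follows from the uniqueness of rigidified isomorphisms, exactly as in the derivation of \eqref{EP_lem_gamma_c}. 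Set $\Xi$ to be the LHS minus the RHS.

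Next, I would compute the residues of $\Xi$ along each component of the polar divisor, using the residue formula for translates of the Kronecker section (extracted from \cref{ch_EP_propomegaexists} via Oda's pairing). The sum over $\alpha\in\ker\psi(S)$ and $\beta\in E^{\prime,\vee}[D'](S)$ splits as a product of two character sums. Standard orthogonality implies the sum vanishes on components of the polar divisor not coming from the image under $\psi\times[D']$, and contributes precisely $\deg\psi\cdot (D')^2$ on components that do; this matches the $(D')^2$ prefactor on the RHS together with the multiplicity $\deg\psi$ of the pullback divisor of $s_{\mathrm{can},E'}$ under $\psi\times[D']$. Consequently, $\Xi$ is regular.

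Finally, both sides are invariant under the translation action of $\ker\psi(S)\times E^{\prime,\vee}[D'](S)$ on the outer $E\times E^\vee$ factor: the LHS absorbs the translation into the summation, and the RHS is invariant because $\psi\times[D']$ factors through the quotient by this subgroup. Hence $\Xi$ is a translation-invariant regular section, and averaging $\Xi$ over the group action together with character orthogonality (as in the proof of \cref{ch_EP_lemTrace2}) yields $(\deg\psi)\cdot(D')^2\cdot\Xi=0$, so $\Xi=0$ by the non-zero-divisor hypothesis. The main obstacle will be the careful bookkeeping of the $\gamma$-isomorphisms and pullbacks when decomposing $U^{[N]\circ\psi,\,[D]\circ[D']}_{s+\alpha,\,t+\beta}$, and verifying that the factor $\deg\psi\cdot(D')^2$ produced by the residue character-sum matches the multiplicity on the RHS precisely, without spurious signs or roots of unity creeping in.
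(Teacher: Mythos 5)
Your proposal correctly identifies the toolkit (residues of translates of $\scan$, Oda's pairing, character orthogonality, translation averaging, the non-zero-divisor hypothesis), but as written it has a genuine gap: you run the residue argument directly on the two-variable object on $E\times E^\vee$, and there the method does not close. The difference $\Xi$ is a section of (a pullback of) $\Po'\otimes\Omega^1_{E\times E^\vee/E^\vee}$ with poles along both \emph{vertical} components $\{\mathrm{pt}\}\times E^\vee$ and \emph{horizontal} components $E\times\{\mathrm{pt}\}$. The residue map attached to the relative differentials $\Omega^1_{E\times E^\vee/E^\vee}$ only sees the vertical direction; vanishing of those residues says nothing about the poles in the $E^\vee$-direction, so ``all residues vanish'' does not yield regularity of $\Xi$. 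Moreover, even along vertical components the residue is a section of the (generically nontrivial, degree-zero) restriction of the Poincar\'e bundle to $\{\mathrm{pt}\}\times E^\vee$ rather than a function, so the character-sum computation needs reinterpretation; and the final step ``regular and translation-invariant, hence killed by averaging'' has no two-variable analogue of the fact, crucial in \cref{ch_EP_lemTrace} and \cref{ch_EP_lemTrace2}, that $\Gamma(E,\Omega^1_{E/S})$ consists exactly of the translation-invariant forms.

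The paper avoids all of this by first reducing to one variable: after localizing so that $M=N\cdot D\cdot D'\cdot\deg\psi$ is invertible, it evaluates both sides along $\id\times\tilde t$ for a universally schematically dense family of torsion sections $\tilde t\in E'^\vee[\tilde D]$ with $\tilde D$ coprime to $M$. Lemma \ref{ch_EP_lem_Ustomega} (which encapsulates the ``bookkeeping of $\gamma$-isomorphisms'' you flag as the main obstacle, together with the compatibilities \ref{ch_EP_lem_Ustomega_b} and \ref{ch_EP_lem_Ustomega_c} needed to commute the evaluation past the translation operators) identifies each evaluated summand with a translate $T_\alpha^*\omega^{[\,\cdot\,]}_{\psi^\vee(\cdot)}$ of the one-variable forms of \cref{ch_EP_propomegaexists}. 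The sums over $\alpha$ and $\beta$ are then collapsed by the trace identities of \cref{ch_EP_lemTrace} and \cref{ch_EP_lemTrace2} --- and it is only inside those lemmas, for honest sections of $\Omega^1_{E/S}(\ker\psi)$, that your residue-plus-translation-averaging argument is actually carried out. If you want to salvage your plan, you should either insert this evaluation-at-dense-torsion reduction, or else replace the ``translation-invariant regular section'' step by the observation that the relevant twisted Poincar\'e bundle has no nonzero global sections over $E\times E^\vee$ --- but you would still need a genuine argument controlling the horizontal poles, which the residue computation alone does not provide.
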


	In its simplest but still interesting form the distribution relation specializes to the following equality:
	\begin{cor}\label{ch_EP_cor_dist1}
	For $E/S$ with $\tilde{D}$ invertible on $S$ and $|E[\tilde{D}](S)|=\tilde{D}^2$:
	\[
			\sum_{e\neq t\in \Ed[\tilde{D}](S)} U^{\tilde{D}}_{t}(\scan)=\tilde{D}^2\cdot \gamma_{1,\tilde{D}}\left( (\id \times [\tilde{D}])^*(\scan)\right)-([\tilde{D}]\times \id)^*(\scan)
	\]
	\end{cor}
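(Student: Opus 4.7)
The strategy is to deduce this identity as a direct specialization of the distribution theorem (Theorem~\ref{ch_EP_thmdist}) applied with $E=E'$, $\psi=\id_E$, $N=D=1$, $D'=\tilde{D}$, and torsion points $s=t=e$. The hypotheses needed by that theorem reduce to the hypotheses of the corollary: with $\psi=\id$ the conditions on $\ker\psi$ are vacuous, while $|\Ed[\tilde{D}](S)|=\tilde{D}^2$ and the invertibility of $\tilde{D}$ are exactly what we have assumed.

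With these choices $[N]\circ\psi=\id$ and $[D]\circ[D']=[\tilde{D}]$, the outer sum over $\alpha\in\ker\psi(S)$ collapses to $\alpha=e$, and using the shorthand $U^{[\tilde{D}]}_{\beta}:=U^{\id,[\tilde{D}]}_{e,\beta}$ from the text, the left-hand side of Theorem~\ref{ch_EP_thmdist} becomes $\sum_{\beta\in\Ed[\tilde{D}](S)}U^{[\tilde{D}]}_{\beta}(\scan)$. On the right-hand side, the operator $U^{1,1}_{e,e}$ applied to $\scan$ just gives $\scan$ back (the defining composition $\gamma_{1,1}\circ(T_e\times T_e)^*\gamma_{1,1}$ is $\id\circ\id$); moreover $([D]\times[N])^*=\id$ and $\gamma_{\psi,[D']}=\gamma_{1,\tilde{D}}$, so the right-hand side simplifies to $\tilde{D}^2\cdot\gamma_{1,\tilde{D}}\bigl((\id\times[\tilde{D}])^*\scan\bigr)$.

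The remaining step is to separate the $\beta=e$ summand on the left and move it across. For this I need to verify $U^{[\tilde{D}]}_{e}(\scan)=([\tilde{D}]\times\id)^*\scan$. Indeed, with $s=t=e$ the translations $T_e\times T_e$ are the identity, so
$$\Ucal^{\id,[\tilde{D}]}_{e,e}=\gamma_{\id,[\tilde{D}]}\circ\gamma_{[\tilde{D}],\id};$$
but $\gamma_{[\tilde{D}],\id}$ is by construction the inverse of $\gamma_{\id,[\tilde{D}]}$, so the composition is the identity on $([\tilde{D}]\times\id)^*\Po$, and its action on the pullback $([\tilde{D}]\times\id)^*\scan$ returns $([\tilde{D}]\times\id)^*\scan$. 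Transposing this term to the other side yields exactly the claimed formula.

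The only real obstacle is the bookkeeping in the preceding paragraph: one must carefully unwind that the translation operators attached to trivial shifts and trivial isogenies degenerate to the identity. This is purely formal, being a consequence of the fact that rigidified line bundles admit no non-trivial automorphisms, which is used throughout Section~\ref{sec_can}.
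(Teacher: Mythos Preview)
Your proof is correct and takes exactly the same approach as the paper: the paper's proof is the single line ``This is the special case $D=N=1$, $\psi=\id$ and $[D']=[\tilde{D}]$ of Theorem~\ref{ch_EP_thmdist},'' and you have simply unpacked what that specialization says, including the verification that $U^{[\tilde D]}_e(\scan)=([\tilde D]\times\id)^*\scan$ needed to isolate the $\beta=e$ summand.
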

	\begin{proof}
		This is the special case $D=N=1$, $\psi=\psi=\id$ and $[D']=[D']=[\tilde{D}]$ of \cref{ch_EP_thmdist}.
	\end{proof}	
	Let us define
	\[
		\scan^D:=D^2\cdot \gamma_{1,D}\left( (\id \times [D])^*(\scan)\right)-([D]\times \id)^*(\scan).
	\]
	the above Corollary states that
	\[
		\sum_{e\neq t\in \Ed[D](S)} U^{D}_{t}(\scan)=\scan^D.
	\]
	If $\varphi\colon E\rightarrow E'$ is an isogeny with $\deg\varphi$ being a non-zero-divisor on $S$, we obtain by summing over all $t\in\Ed$ and all $s\in \ker\varphi$ the following special case of the distribution relation:
\begin{cor}\label{ch_EP_cor_dist2}
	Let $\varphi\colon E\rightarrow E'$ be an isogeny of elliptic curves over $S$ with $\deg \varphi$ a non-zero-divisor on $S$. For $D$ a non-zero-divisor on $S$ we have
	\[
		\sum_{\tau\in \ker\varphi(S)} ([D]\times\id)^*\Ucal_{\tau,e}^{\varphi,\id}\left( (T_\tau\times \varphi^\vee)^*\scan^D \right) =([D]\times\id)^* \gamma_{\varphi,\id}\left( (\varphi \times \id)^*(s_{\mathrm{can},E'}^D)\right).
	\]
	\end{cor}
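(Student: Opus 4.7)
I would deduce Corollary~\ref{ch_EP_cor_dist2} from the general distribution relation of Theorem~\ref{ch_EP_thmdist} by a specialization followed by a summation over torsion points. Concretely, apply the theorem with the parameter choices $N=1$, $D=D$, $D'=1$, $\psi=\varphi$ and $s=e$, leaving $t\in E'^\vee[D](S)$ free. With these choices the sum over $\beta\in E'^\vee[D'](S)=\{e\}$ collapses, and the theorem yields, for each $t$,
\[
    \sum_{\alpha\in\ker\varphi(S)} U^{\varphi,[D]}_{\alpha,t}(\scan) \;=\; ([D]\times\id)^*\gamma_{\varphi,\id}\left((\varphi\times\id)^* U^{D}_{t}(\scan_{E'})\right).
\]
Summing both sides over $t\in E'^\vee[D](S)\setminus\{e\}$ and applying Corollary~\ref{ch_EP_cor_dist1} to $E'$, which identifies $\sum_{e\neq t}U^{D}_{t}(\scan_{E'})$ with $\scan^D_{E'}$, the right-hand side becomes precisely $([D]\times\id)^*\gamma_{\varphi,\id}((\varphi\times\id)^*\scan^D_{E'})$, i.e. the right-hand side of the corollary.

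\textbf{Identification of the left-hand sides.} It remains to rewrite the summed expression $\sum_{\alpha\in\ker\varphi}\sum_{e\neq t\in E'^\vee[D]} U^{\varphi,[D]}_{\alpha,t}(\scan)$ as the left-hand side of the corollary. For fixed $\alpha=\tau$, I claim the inner sum over $t$ equals $([D]\times\id)^*\Ucal^{\varphi,\id}_{\tau,e}((T_\tau\times\varphi^\vee)^*\scan^D)$. This is a consequence of the natural extension of Corollary~\ref{ch_EP_cor_compU} from the purely-multiplicative setting $([N],[D])$ to the mixed setting $(\varphi,[D])$: one has a decomposition
\[
    \Ucal^{\varphi,[D]}_{\alpha,t}\;=\;\bigl(([D]\times\id)^*\Ucal^{\varphi,\id}_{\alpha,e}\bigr)\circ\bigl((T_\alpha\times\varphi^\vee)^*\Ucal^{\id,[D]}_{e,?}\bigr),
\]
which, like Corollary~\ref{ch_EP_cor_compU}, follows from the uniqueness of isomorphisms of rigidified line bundles applied to an obvious commutative square of $\gamma$-maps. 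Summing the resulting identity over $t\in E'^\vee[D]\setminus\{e\}$ and absorbing the inner sum $\sum_{e\neq t}U^{[D]}_{?}(\scan)$ inside the $(T_\alpha\times\varphi^\vee)^*$-pullback into $\scan^D$ via Corollary~\ref{ch_EP_cor_dist1} (for $E$) completes the identification.

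\textbf{Expected main obstacle.} The principal technical work is the verification of the mixed composition identity for $\Ucal^{\varphi,[D]}_{\alpha,t}$. One must carefully commute $[D]$-multiplication on $E$ and on $(E')^\vee$ with the $\varphi^\vee$-pullback, and decompose $\gamma_{\varphi,[D]}$ in terms of the basic isomorphisms $\gamma_{\varphi,\id}$ and $\gamma_{\id,[D]}$ along a commutative square analogous to diagram~\eqref{EP_diag1}. The verification itself is mechanical thanks to the uniqueness of morphisms of rigidified line bundles, but tracking the various pullback bundles and rigidifications is the bookkeeping-heavy step; once it is in place, the corollary falls out from the specialized distribution relation combined with Corollary~\ref{ch_EP_cor_dist1}.
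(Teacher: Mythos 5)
Your proposal is correct and follows essentially the same route as the paper: both deduce the corollary from Theorem~\ref{ch_EP_thmdist} combined with Corollary~\ref{ch_EP_cor_dist1} and the same bookkeeping identification of the mixed translation operators (which the paper likewise dispatches as a ``straightforward computation'' via Lemma~\ref{ch_EP_lem_Ustomega}). The only difference is organizational --- the paper applies the theorem twice (the full double sum with $D'=D$ minus the $t=e$ slice with $D'=1$) and subtracts, while you sum the $D'=1$ specialization over $t\neq e$ and invoke Corollary~\ref{ch_EP_cor_dist1} for $E'$ to assemble the right-hand side; the two computations are formally equivalent.
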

	\begin{proof}
		After inverting $D$ and making a finite \'etale base change, we may assume that $|E[D](S)|=D^2$. The general distribution relation gives us the identity
		\begin{align}\label{eq_dist1}
		\sum_{\substack{\tau\in (\ker\varphi)(S),\\ t\in (E'^\vee[D])(S)}} U^{\varphi,D}_{\tau,t}(\scan )=(D^2)\cdot \gamma_{\varphi,[D]}\left((\varphi\times[D])^*s_{\text{can},E'}\right).
	\end{align}
	On the other hand, again by the general distribution relation we have
	\begin{align}\label{eq_dist2}
		\sum_{\substack{\tau\in (\ker\varphi)(S)}} U^{\varphi,D}_{\tau,e}(\scan )&=([D]\times\id)^*\gamma_{\varphi,\id}\left((\varphi\times\id)^*U_{e,e}^{\id,[D]}(s_{\text{can},E'})\right)=\\
		&=([D]\times\id)^*\Big[\gamma_{\varphi,\id}\left((\varphi\times\id)^*s_{\text{can},E'})\right)\Big].
	\end{align}
	By \cref{ch_EP_cor_dist1} and a straight forward computation, we can identify the following two sums
	\[
		\sum_{\substack{\tau\in (\ker\varphi)(S),\\ e\neq t\in (E'^\vee[D])(S)}} U^{\varphi,D}_{\tau,t}(\scan )=\sum_{\tau\in \ker\varphi(S)} ([D]\times\id)^*\Ucal_{\tau,e}^{\varphi,\id}\left( (T_\tau\times \varphi^\vee)^*\scan^D \right).
	\]
	Subtracting \eqref{eq_dist2} from \eqref{eq_dist1} and using the last identification proves the corollary.
	\end{proof}		
	
	\begin{rem}
		For a CM elliptic curve $E$ over $\CC$, one can use the methods from section \ref{subsec_Analitification} to describe the analytification of translations of the Kronecker section in terms of translations of the Kronecker theta function. In this case, the distribution relation specializes to the analytic distribution relation in \cite{bannai_kobayashi}.
	\end{rem}
	\subsection{Density of torsion sections}
Before we prove the distribution relation let us recall the density of torsion sections for elliptic curves:
\begin{lem}
	Let  $N>1$ and $E/S$ be an elliptic curve with $N$ invertible on $S$. For $\Fcal$ a locally free $\Ocal_E$-module of finite rank, $U\subseteq E$ open and $s\in\Gamma(U,\Fcal)$ we have: The section $s$ is zero, if and only if $t^*s=0$ for all $T\rightarrow S$ finite \'{e}tale, $n\geq 0$ and $t\in E[N^n](T)$.
\end{lem}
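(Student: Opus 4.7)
The ``only if'' direction is clear. For ``if'', I plan to prove the contrapositive: if $s\neq 0$, then $t^*s\neq 0$ for some choice of $t$. After shrinking $U$ and locally trivializing $\Fcal$ I reduce to $\Fcal=\Ocal_E$ with $U$ affine and $s$ a non-zero regular function on $U$. Since $N$ is invertible on $S$, $[N^n]\colon E\to E$ is finite \'etale and hence $E[N^n]\to S$ is finite \'etale; taking $T:=E[N^n]$ with the tautological torsion section reduces the hypothesis to
\[
s|_{E[N^n]\cap U}=0\quad\text{in}\ \Ocal_{E[N^n]}(E[N^n]\cap U)\qquad\text{for every }n\ge 0,
\]
so the problem becomes: show that this forces $s=0$.

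The heart of the proof is a fiber-wise density argument. Assuming $s\neq 0$ for contradiction, the open locus $W:=U\setminus V(s)$ where $s$ is a unit is non-empty (provided $s$ is not nilpotent in $\Ocal_E(U)$; see below). Since $E\to S$ is smooth, $\pi(W)\subseteq S$ is open and non-empty; pick any point $\xi\in\pi(W)$. Then $W_\xi:=W\cap E_\xi$ is a non-empty open subscheme of the elliptic curve $E_\xi$ over $k(\xi)$. Over the algebraic closure $\overline{k(\xi)}$, the torsion $\bigcup_n E_\xi[N^n](\overline{k(\xi)})$ is Zariski dense in $E_\xi$: the $N^n$-torsion has $N^{2n}$ geometric points, while any non-zero regular function on a non-empty affine open of a smooth projective curve over a field has only finitely many zeros. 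Hence $W_\xi$ contains a geometric torsion point $\bar t$ of some order $N^n$, and its image gives a scheme-theoretic point of $W\cap E[N^n]$, showing this open subscheme of $E[N^n]$ is non-empty.

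Since $s$ is a unit on $W$, its restriction to the non-empty open $W\cap E[N^n]\subseteq E[N^n]\cap U$ is also a unit, hence non-zero, contradicting $s|_{E[N^n]\cap U}=0$. The one subtle point --- indeed, the only step I expect to require real care --- is verifying that $W$ is non-empty when $s\neq 0$. When $S$ (and hence $\Ocal_E$, as $E\to S$ is smooth) is reduced this is automatic, since a non-zero element of a reduced ring is not nilpotent. The general case reduces to this using that the torsion subschemes $E[N^n]\subseteq E$ and the finite \'etale site over $S$ are invariant under nilpotent thickenings of the base, so that the whole hypothesis descends to $S_{\mathrm{red}}$.
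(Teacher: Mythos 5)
Your reduction to $\Fcal=\Ocal_E$ and the fiber-wise density argument in the reduced case are fine, and they follow the same basic strategy as the paper, which reduces to $\Fcal=\Ocal_E^r$ and then invokes \cite[Thm.~11.10.9]{EGA4_3} to pass from fiber-wise density of torsion to schematic density over the base. But the step you yourself flag as the only delicate one --- the non-reduced case --- is where your argument genuinely breaks down. Descending the hypothesis to $S_{\mathrm{red}}$ and applying the reduced case only yields $s|_{E\times_S S_{\mathrm{red}}}=0$, i.e.\ that $s$ lies in the nilradical; it does not yield $s=0$. Concretely, over $S=\Spec k[\epsilon]/(\epsilon^2)$ a section of the form $s=\epsilon\cdot f$ is non-zero, yet your open locus $W=U\setminus V(s)$ is empty and your reduction to $S_{\mathrm{red}}$ loses $s$ entirely. (The lemma is still true for such $s$: one has $t^*s=\epsilon\cdot t^*\bar f$, which is non-zero for a suitable torsion section because $\bar f\neq 0$ on the reduced fiber --- but this is exactly the case your proof does not reach.)

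To close the gap you must use flatness of $E/S$ in an essential way: filtering $\Ocal_E$ by the powers $\Ncal^i\Ocal_E$ of the nilradical $\Ncal$ of $\Ocal_S$, flatness identifies the graded pieces with $\Ocal_{E_{\mathrm{red}}}\otimes_{\Ocal_{S_{\mathrm{red}}}}(\Ncal^i/\Ncal^{i+1})$, and one then needs the density statement with coefficients in an arbitrary quasi-coherent module over a reduced base (plus a limit/finite-presentation argument when $S$ is not noetherian, since $\Ncal$ need not be nilpotent). This is precisely what the notion of a \emph{universally schematically dense} family of subschemes and \cite[Thm.~11.10.9]{EGA4_3} package, and it is what the paper cites instead of arguing by hand. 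Either invoke that result, or supply the d\'evissage just described; as written, your proof establishes the lemma only over a reduced base.
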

\begin{proof}
By the sheaf property we may prove this locally and reduce to the case  $\Fcal=\Ocal_E^r$, $r\geq 0$. By \cite[Thm. 11.10.9]{EGA4_3} we are further reduced to prove the result in the case $S=\Spec k$ for a field $k$. In this case the result is well-known, cf.~\cite[(5.30) Thm, and the remark (2)  afterwards]{moonen_vdgeer}.
\end{proof}

\begin{rem}
	If we take all torsion points different from zero, we still get a universally schematically dense family. Indeed, a priori the family is then only universally schematically dense in the open subscheme $U=E\setminus\{e(S)\}$, but the inclusion $U\hookrightarrow E$ is also universally schematically dense, since it is the complement of a divisor \cite[cf. the remark after Lemma 11.33]{goertz}.
\end{rem}
\subsection{Proof of the distribution relation}	
For the proof of the distribution relation we will need the following lemma:
	\begin{lem}\label{ch_EP_lem_Ustomega}
	Let $\psi\colon E\rightarrow E'$ be an isogeny of elliptic curves and $D,\tilde{D}$ be positive integers.
	\begin{enumerate}
		\item\label{ch_EP_lem_Ustomega_b} For $\tilde{t}\in E'^\vee[\tilde{D}](S)$ we have:
		\begin{align*}
			&([\tilde{D}]\times\id)^*\gamma_{\psi,[D']}\circ(\psi\times [D'])^*\Ucal^{[\tilde{D}]}_{D'\tilde{t}}=\\
			=&([D']\times \psi^\vee)^*\Ucal^{[\tilde{D}]}_{\psi^\vee(\tilde{t})}\circ ([\tilde{D}]\times T_{\tilde{t}})^*\gamma_{\psi,[D']}
		\end{align*}
		\item\label{ch_EP_lem_Ustomega_a} For $\alpha \in (\ker\psi)(S)$ and $\beta\in (E'^\vee[D'])(S)$ with $\psi^\vee(\beta)\neq e$ we have
		\[
			(\id\times e)^* U^{\psi,[D']}_{\alpha,\beta}(\scan)=T_\alpha^*\omega^{[D']}_{\psi^\vee(\beta)}\,.
		\]
		\item\label{ch_EP_lem_Ustomega_c} For $\tilde{t}\in E'^\vee[\tilde{D}](S)$ we have:
		\[
			([D']\times\psi^\vee)^*\Ucal^{[\tilde{D}]}_{\psi^\vee(\tilde{t})}\circ ([\tilde{D}]\times T_{\tilde{t}})^*\Ucal^{\psi,[D']}_{\tilde{D}s,t}=\Ucal^{\psi,[D'\cdot\tilde{D}]}_{s,t+\tilde{t}}
		\] 
	\end{enumerate}
	\end{lem}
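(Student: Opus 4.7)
All three identities compare compositions of morphisms between pullbacks of the Poincaré bundle on products of elliptic curves, and each can be attacked by the same uniqueness principle that has been the workhorse of the first part of the paper: any two isomorphisms between rigidified line bundles with the same source and target coincide (cf.\ diagram \eqref{EP_diag1}). After unfolding the definitions of $\Ucal$ and $\gamma$, it therefore suffices, in each part, to verify that both sides are rigidified morphisms of the same source bundle into the same target bundle.

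For part \ref{ch_EP_lem_Ustomega_b}, I would expand both $\Ucal^{[\tilde D]}_{D'\tilde t}$ and $\Ucal^{[\tilde D]}_{\psi^\vee(\tilde t)}$ via their definitions as compositions of a $\gamma_{\id,[\tilde D]}$, a $\gamma_{[\tilde D],\id}$ and the appropriate translation. The elementary identities $[D']\circ T_{\tilde t}=T_{D'\tilde t}\circ[D']$ and $\psi^\vee\circ T_{\tilde t}=T_{\psi^\vee(\tilde t)}\circ\psi^\vee$ identify the sources of the two sides on $E\times(E')^\vee$, and rigidified uniqueness closes the argument.

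For part \ref{ch_EP_lem_Ustomega_a}, the key step is unfolding definitions. The definition of $U^{\psi,[D']}_{\alpha,\beta}$ yields
\[
(\id\times e)^*U^{\psi,[D']}_{\alpha,\beta}(\scan)=(\id\times e)^*\Ucal^{\psi,[D']}_{\alpha,\beta}\cdot \bigl((T_\alpha\times T_\beta)\circ(\id\times e)\bigr)^*([D']\times\psi^\vee)^*\scan.
\]
The composition $([D']\times\psi^\vee)\circ(T_\alpha\times T_\beta)\circ(\id\times e)$ equals $(T_{[D']\alpha}\circ[D'])\times\psi^\vee(\beta)$ (constant on the second factor), so the pullback of $\scan$ appearing here agrees with the one appearing on the right-hand side $T_\alpha^*(\id\times e)^*U^{[D']}_{\psi^\vee(\beta)}(\scan)$ after the same bookkeeping. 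It remains to check that the two $\Ucal$-factors agree, and this again reduces to rigidified uniqueness; equivalently, one can trace the identification back through \eqref{ch_EP_eq7}, where $\omega^{\psi}_{t}$ was expressed directly in terms of $\gamma_{\psi,\id}$. For part \ref{ch_EP_lem_Ustomega_c}, composing two $\Ucal$-operators unfolds into four $\gamma$'s and two translations; the compatibility \eqref{EP_lem_gamma_c} together with \cref{ch_EP_cor_compU} then identifies this product with the single operator $\Ucal^{\psi,[D'\tilde D]}_{s,t+\tilde t}$.

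I expect the main obstacle to be purely notational: tracking pullbacks through long chains of products, keeping straight which factor is being translated and which Poincaré bundle (on $E$ or on $E'$) each object ultimately descends from. Once the diagrams of rigidified line bundles are drawn out, the uniqueness argument disposes of each identity immediately.
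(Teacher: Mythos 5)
Your high-level strategy---unfold the definitions of the $\Ucal$'s into $\gamma$'s and translations and then invoke the unique characterization of isomorphisms of rigidified line bundles---is indeed the engine behind the paper's proof, but as written your argument has a genuine gap: the uniqueness principle only applies to \emph{morphisms of rigidified line bundles}, and the two sides of each identity are not such morphisms. Once a translation $T_{\tilde t}$ or $T_\alpha\times T_\beta$ enters, the pullback $(T_s\times T_t)^*\Po$ carries no canonical rigidification along the zero section, so matching sources and targets (your ``elementary identities'' $[D']\circ T_{\tilde t}=T_{D'\tilde t}\circ[D']$, etc.) does not put you in a position where ``rigidified uniqueness closes the argument.'' The actual content of the proof is precisely the reduction to situations where uniqueness \emph{does} apply: one must commute the translation pullbacks past the various $\gamma$'s one at a time. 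In the paper this is done for part \ref{ch_EP_lem_Ustomega_b} by three commutative squares, each of which (before applying $(\id\times T_{\tilde t})^*$, and using that $\tilde t$ is $\tilde D$-torsion) involves only isogeny-pullbacks of $\gamma$'s and hence commutes by uniqueness; and for part \ref{ch_EP_lem_Ustomega_a} by first establishing the intermediate identity \eqref{eq2} via three compatibilities (I)--(III) between $\gamma$'s and translations, and only then applying $(\id\times e)^*$, at which point the bracketed factor becomes the canonical isomorphism and \eqref{ch_EP_eq7} identifies the result with $T_\alpha^*\omega^{[D']}_{\psi^\vee(\beta)}$. Your proposal asserts the conclusions of these steps but supplies no mechanism for them; the obstacle is not ``purely notational.''

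A secondary point: for part \ref{ch_EP_lem_Ustomega_c} you propose to conclude from \eqref{EP_lem_gamma_c} and \cref{ch_EP_cor_compU}, but both of those statements concern only multiplication maps $[N]$, $[D]$, whereas part \ref{ch_EP_lem_Ustomega_c} involves a general isogeny $\psi$. One needs the analogous composition compatibilities for $\gamma_{\psi,[D']}$, which are established by the same diagram chases as in parts \ref{ch_EP_lem_Ustomega_a} and \ref{ch_EP_lem_Ustomega_b}, not by citing the special case. If you fill in the commutation identities between translations and the $\gamma$'s (or, alternatively, argue that the two sides differ by a global unit of $\Ocal_S$ and pin it down by a careful evaluation along a section where the rigidifications are available), the proof goes through along the lines you indicate.
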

	\begin{proof}
		\ref{ch_EP_lem_Ustomega_b}: We have the following commutative diagrams:
		\[
			\begin{tikzcd}[column sep=1.5in]
				(\id \times T_{\tilde{t}})^*(\psi\circ[\tilde{D}] \times [D'])^*\Po'
					\ar[r,"{(\id \times T_{\tilde{t}})^*(\psi \times [D'])^*\gamma_{[\tilde{D}],\id}}"]
					\ar[d,equal] 
				& 	(\id \times T_{\tilde{t}})^*(\psi \times [D'\cdot\tilde{D}])^*\Po'
					\ar[d,"{(\id \times T_{\tilde{t}})^*\gamma_{\psi,[D'\cdot\tilde{D}]}}"]\\
				(\id \times T_{\tilde{t}})^*(\psi\circ[\tilde{D}] \times [D'])^*\Po'
					\ar[r,"{(\id \times T_{\tilde{t}})^*([\tilde{D}] \times \id )^*\gamma_{\psi,[D']}}"] 
				& (\id \times T_{\tilde{t}})^*([D'\cdot\tilde{D}] \times \psi^\vee)^*\Po
			\end{tikzcd}
		\]
		\[
			\begin{tikzcd}[column sep=1.5in]
				(\psi \times [D'\cdot\tilde{D}])^*\Po'
					\ar[r,"{(\psi \times [D'])^*\gamma_{\id,[\tilde{D}]}}"]
					\ar[d,"{(\id \times T_{\tilde{t}})^*\gamma_{\psi,[D'\cdot\tilde{D}]}}"]
				& 	(\psi\circ[\tilde{D}] \times [D'])^*\Po'
					\ar[d,"{\gamma_{\psi\circ[\tilde{D}],[D']}}"]\\
				 (\id \times T_{\tilde{t}})^*([D'\cdot\tilde{D}] \times \psi^\vee)^*\Po
					\ar[r,"{(\id \times T_{\tilde{t}})^*([D'] \times \psi^\vee)^*\gamma_{[\tilde{D}],\id}}"]
				& 		([D'] \times \psi^\vee\circ[\tilde{D}])^*\Po
			\end{tikzcd}
		\]
		\[
			\begin{tikzcd}[column sep=1.5in]
				(\psi\circ[\tilde{D}] \times [D'])^*\Po'
					\ar[r,"{([\tilde{D}] \times \id)^*\gamma_{\psi,[D']}}"]
					\ar[d,"{\gamma_{[\tilde{D}]\circ\psi,[D']}}"]
				& 	([D'\cdot\tilde{D}] \times \psi^\vee)^*\Po
					\ar[d,equal]\\
				([D'] \times \psi^\vee\circ[\tilde{D}])^*\Po
					\ar[r,"{([D'] \times \psi^\vee)^*\gamma_{\id,[\tilde{D}]}}"]
				& 	([D'\cdot\tilde{D}] \times \psi^\vee)^*\Po
			\end{tikzcd}
		\]		
		The composition of the upper horizontal arrows in the three diagrams is
		\begin{align*}
			([\tilde{D}]\times \id)^*\gamma_{\psi,[D']}\circ (\psi\times[D'])^*\gamma_{\id,[\tilde{D}]}\circ (\id\times T_{\tilde{t}})^*(\psi\times[D'])^*\gamma_{[\tilde{D}],\id}
		\end{align*}
		while the composition of the lower horizontal arrows is:
		\[
			([D']\times\psi^\vee)^*\gamma_{\id,[\tilde{D}]}\circ (\id\times T_{\tilde{t}})^*([D']\times\psi^\vee)^*\gamma_{[\tilde{D}],\id}\circ ([\tilde{D}]\times T_{\tilde{t}})^*\gamma_{\psi,[D']}
		\]
		The commutativity shows that both compositions are equal, i.\,e. it gives the middle equality in
		\begin{align*}
			&([\tilde{D}]\times\id)^*\gamma_{\psi,[D']}\circ(\psi\times [D'])^*\Ucal^{[\tilde{D}]}_{D'\tilde{t}}=\\
			=&([\tilde{D}]\times \id)^*\gamma_{\psi,[D']}\circ (\psi\times[D'])^*\gamma_{\id,[\tilde{D}]}\circ (\id\times T_{\tilde{t}})^*(\psi\times[D'])^*\gamma_{[\tilde{D}],\id}=\\
			=&([D']\times\psi^\vee)^*\gamma_{\id,[\tilde{D}]}\circ (\id\times T_{\tilde{t}})^*([D']\times\psi^\vee)^*\gamma_{[\tilde{D}],\id}\circ ([\tilde{D}]\times T_{\tilde{t}})^*\gamma_{\psi,[D']}=\\
			=&([D']\times \psi^\vee)^*\Ucal^{[\tilde{D}]}_{\psi^\vee(\tilde{t})}\circ ([\tilde{D}]\times T_{\tilde{t}})^*\gamma_{\psi,[D']}.
		\end{align*}
		\ref{ch_EP_lem_Ustomega_a}:	Let us first prove the following equality
		\begin{equation}\label{eq2}
			\Ucal^{\psi,[D']}_{\alpha,\beta}=([D']\times\id)^*\left[\gamma_{\psi,\id}\circ(T_{D'\alpha}\times\id)^*\gamma_{\id,\psi^\vee}\right]\circ(T_\alpha\times\psi^\vee)^*\Ucal_{\psi^\vee(\beta)}^{[D']}
		\end{equation}
		The proof of this equality follows the same lines as the proof of \ref{ch_EP_lem_Ustomega_b}. So let us only write the equations instead of all commutative diagrams:
		\begin{align*}
			\tag{I} &(\psi\times\id)^*\gamma_{\id,[D']}=([D']\times\id)^*\gamma_{\id,\psi^\vee}\circ\gamma_{\psi,[D']}\\
			\tag{II} &(T_\alpha\times\psi^\vee)^*\gamma_{\id,[D']}\circ(T_\alpha\times T_\beta)^*\gamma_{[D']\circ\psi,\id}=\\
			=&(T_\alpha\times\id)^*\gamma_{\psi,[D']}\circ(\psi\times T_\beta)^*\gamma_{[D'],\id}\\
			\tag{III} &(\id\times\psi^\vee)^*\gamma_{[D'],\id}=\gamma_{[D']\circ\psi,\id}\circ([D']\times\id)^*\gamma_{\id,\psi^\vee}
		\end{align*}
		These identities follow easily from the fact that $\gamma_{[D'],\id}$ is induced from the universal property of the Poincar\'e bundle. Using these identities we compute
		\begin{align*}
			&(\psi\times\id)^*\gamma_{\id,[D']}\circ (T_\alpha\times T_\beta)^*\left[ (\psi\times\id)^*\gamma_{[D'],\id}\circ([D']\times\id)^*\gamma_{\id,\psi^\vee} \right]=\\
			=&(T_\alpha\times\id)^*(\psi\times\id)^*\gamma_{\id,[D']}\circ (\psi\times T_\beta)^*\gamma_{[D'],\id}\circ(T_\alpha\times T_\beta)^*([D']\times\id)^*\gamma_{\id,\psi^\vee} \stackrel{(I)}{=}\\
			=&([D']\circ T_\alpha\times\id)^*\gamma_{\id,\psi^\vee}\circ(T_\alpha\times\id)^*\gamma_{\psi,[D']}\circ (\psi\times T_\beta)^*\gamma_{[D'],\id}\circ([D']\circ T_\alpha\times T_\beta)^*\gamma_{\id,\psi^\vee} \stackrel{(II)}{=}\\
			=&([D']\circ T_\alpha\times\id)^*\gamma_{\id,\psi^\vee}\circ(T_\alpha\times\psi^\vee)^*\gamma_{\id,[D']}\circ(T_\alpha\times T_\beta)^*\gamma_{[D']\circ\psi,\id}\circ([D']\circ T_\alpha\times T_\beta)^*\gamma_{\id,\psi^\vee} \stackrel{(III)}{=}\\
			=&([D']\times\id)^*(T_{[D'](\alpha)}\times\id)^*\gamma_{\id,\psi^\vee}\circ(T_\alpha\times\psi^\vee)^*\gamma_{\id,[D']}\circ(T_\alpha\times T_\beta)^*(\id\times\psi^\vee)^*\gamma_{[D'],\id}.
		\end{align*}
		Now equation \eqref{eq2} follows by precomposing this computation with $([D']\times\id)^*\gamma_{\psi,\id}\circ$. Observing that 
		\[
		(\id\times e)^*\left[\gamma_{\psi,\id}\circ(T_{D'\alpha}\times\id)^*\gamma_{\id,\psi^\vee}\right]
		\]
		is just the canonical isomorphism $T_{D'\alpha}^*\Ocal_{E}\righteq \Ocal_{E}$, we deduce $(a)$ from \eqref{eq2} by applying $(\id\times e)^*$.\\
		\ref{ch_EP_lem_Ustomega_c}: Follows along the same lines as \ref{ch_EP_lem_Ustomega_a} and \ref{ch_EP_lem_Ustomega_b}.
 	\end{proof}

	\begin{proof}[Proof of the distribution relation]
	Since $M:=N\cdot D\cdot D'\cdot \deg\psi$ is a non-zero-divisor, we may prove the equality after inverting $M$. Thus, let us assume that $M$ is invertible on $S$.	Let us write
	\begin{align*}
		A&:=\sum_{\substack{\alpha\in (\ker\psi)(S),\\ \beta\in (E'^\vee[D'])(S)}} U^{[N]\circ \psi,~[D\cdot D']}_{s+\alpha,~t+\beta}(\scan )\\
		B&:=(D')^2\cdot \left( ([D]\times[N])^*\gamma_{\psi,[D']} \right)\left((\psi\times [D'])^*U^{[N],[D]}_{\psi(s),[D'](t)}(s_{\text{can},E'})\right).
	\end{align*}
	Our aim is to prove $A=B$. Using the Zariski covering $(S[\frac{1}{\tilde{D}}])_{\tilde{D}>1, (\tilde{D},M)=1}$ together with a density of torsion section argument we reduce the proof of the equality $(\id\times\tilde{t})^*A=(\id\times\tilde{t})^*B$ for all $\tilde{D}$ torsion points $\tilde{t}\in E'^\vee[\tilde{D}](T)$ with $\tilde{D}$ invertible on $T$. Further, since $([D\cdot D']\times e)^*\Ucal^{[\tilde{D}]}_{([N]\circ\psi^\vee)(\tilde{t})}$ is an isomorphism, we are reduced to prove the following: For all $\tilde{D}>1$ coprime to $M$ we have the following equality:
	\begin{enumerate}
	\item[$(*)_{\tilde{D}}$] For all pairs $(T,\tilde{t})$ with $T$ an $S$-scheme, $\tilde{D}$ invertible on $T$ and $e\neq \tilde{t}\in E'^\vee[\tilde{D}](T)$ we have 
	\begin{align}\label{ch_EP_thmdist_eq2}
		\notag&\left(([D\cdot D']\times e)^*\Ucal^{[\tilde{D}]}_{([N]\circ\psi^\vee)(\tilde{t})} \right)\left[([\tilde{D}]\times \tilde{t})^*A\right]=\\
		=&\left(([D\cdot D']\times e)^*\Ucal^{[\tilde{D}]}_{([N]\circ\psi^\vee)(\tilde{t})} \right)\left[([\tilde{D}]\times \tilde{t})^*B\right]=
	\end{align}
	\end{enumerate}	
	We compute the left hand side for arbitrary $(T,\tilde{t})$:
		\begin{align}\label{ch_EP_thmdist_eq3}
		&\left(([D\cdot D']\times e)^*\Ucal^{[\tilde{D}]}_{([N]\circ\psi^\vee)(\tilde{t})}\right)\left(([\tilde{D}]\times \tilde{t})^*A\right)\stackrel{\text{Lem.} \ref{ch_EP_lem_Ustomega} (c)}{=}\\
		\notag =&(\id \times e)^*\left(\sum_{\substack{\alpha\in (\ker\psi)(S),\\ \beta\in (E'^\vee[D'])(S)}} U^{[N]\circ\psi,~[D\tilde{D} D']}_{(\tilde{D})^{-1}(s+\alpha),~\tilde{t}+t+\beta}(\scan) \right)\stackrel{\text{Lem.} \ref{ch_EP_lem_Ustomega}(b)}{=}\\
		\notag =&\sum_{\substack{\alpha\in (\ker\psi)(S),\\ \beta\in (E'^\vee[D'])(S)}} (T_{(\tilde{D})^{-1}(s+\alpha)})^* \omega^{[D\tilde{D} D']}_{([N]\circ\psi^\vee)(\tilde{t}+t+\beta)}=\\
		\notag =&(T_{(\tilde{D})^{-1}(s)})^*\sum_{\substack{\alpha\in (\ker\psi)(S),\\ \beta\in (E'^\vee[D'])(S)}} T_{\alpha}^* \omega^{[D\tilde{D} D']}_{([N]\circ\psi^\vee)(\tilde{t}+t+\beta)}\stackrel{\text{Lem.} \ref{ch_EP_lemTrace2}\ref{ch_EP_lemTrace2_b}}{=}\\
		\notag =&(T_{(\tilde{D})^{-1}(s)})^*\sum_{\beta\in (E'^\vee[D'])(S)} \psi^* \omega^{[D\tilde{D} D']}_{N(\tilde{t}+t+\beta)}\stackrel{\text{Lem.} \ref{ch_EP_lemTrace}\ref{ch_EP_lemTrace_b}}{=}\\
		\notag =&(D')^2\cdot(T_{(\tilde{D})^{-1}(s)})^* \psi^* \omega^{[D\tilde{D}]}_{(D'\cdot N)(\tilde{t}+t)}
	\end{align}
	Before we simplify the right hand side of the above equation, we use \cref{ch_EP_lem_Ustomega} to simplify the following expression:
	\begin{align*}
		&([D\cdot D']\times[N]\circ\psi^\vee)^*\Ucal^{[\tilde{D}]}_{([N]\circ\psi^\vee)(\tilde{t})}\circ([\tilde{D}]\times T_{\tilde{t}})^*([D]\times[N])^*\gamma_{\psi,[D']}\circ\\
		&\circ([\tilde{D}]\times T_{\tilde{t}})^*(\psi\times[D'])^*\Ucal^{[N],[D]}_{\psi(s),D't}=\\
		=&([D]\times[N])^*\left[ ([D']\times\psi^\vee)^*\Ucal^{[\tilde{D}]}_{([N]\circ\psi^\vee)(\tilde{t})}\circ([\tilde{D}]\times T_{\tilde{Nt}})^*\gamma_{\psi,[D']}\right]\circ\\
		&\circ([\tilde{D}]\times T_{\tilde{t}})^*(\psi\times[D'])^*\Ucal^{[N],[D]}_{\psi(s),D't}\stackrel{\text{Lem.}\ref{ch_EP_lem_Ustomega}\ref{ch_EP_lem_Ustomega_b}}{=}\\
		=&([D\tilde{D}]\times[N])^*\gamma_{\psi,[D']}\circ(\psi\times[D'])^*\Big[([D]\times[N])^*\Ucal_{ND'\tilde{t}}^{[\tilde{D}]}\circ\\
		&\circ([\tilde{D}]\times T_{D'\tilde{t}})^*\Ucal^{[N],[D]}_{\psi(s),D't}\Big]\stackrel{\text{Cor.}\ref{ch_EP_cor_compU}}{=}\\
		=&([D\tilde{D}]\times[N])^*\gamma_{\psi,[D']}\circ(\psi\times[D'])^*\Ucal^{[N],[D\tilde{D}]}_{\tilde{D}^{-1}(\psi(s)),D'(t+\tilde{t})}
	\end{align*}	
	Using this and again \cref{ch_EP_lem_Ustomega}, the right hand side of \eqref{ch_EP_thmdist_eq2} is:
	\begin{align*}
		&\left(([D\cdot D']\times e)^*\Ucal^{\tilde{D}}_{([N]\circ\psi^\vee)(\tilde{t})} \right)\left[([\tilde{D}]\times \tilde{t})^*B\right]=\\
		=&(D')^2\cdot(\id\times e)^*\left( (\psi\times[D'])^*U^{[N],[D\tilde{D}]}_{\tilde{D}^{-1}\psi(s),[D'](t+\tilde{t})}(s_{\mathrm{can},E'}) \right)\stackrel{\text{Lem.} \ref{ch_EP_lem_Ustomega} (a)}{=}\\
		=& (D')^2\cdot \psi^* T_{\tilde{D}^{-1}\psi(s)}^*\omega^{[D\tilde{D}]}_{ND'(t+\tilde{t})}=(D')^2\cdot  T_{\tilde{D}^{-1}s}^*\psi^*\omega^{[D\tilde{D}]}_{ND'(t+\tilde{t})}
	\end{align*}
	Comparing this last equation to \eqref{ch_EP_thmdist_eq3} shows that the equation in $(*)_{\tilde{D}}$ holds for all pairs $(T,\tilde{t})$.
	\end{proof}

\bibliographystyle{amsalpha} 
\bibliography{PaperEisensteinPoincare}
\end{document}